\theoremstyle{plain}
\newtheorem{Thm}{Theorem}[section]
\newtheorem{Lem}[Thm]{Lemma}
\newtheorem{Prop}[Thm]{Proposition}
\newtheorem{Cor}[Thm]{Corollary}
\theoremstyle{definition}
\newtheorem{remark}[Thm]{Remark}
\numberwithin{equation}{section}
\newcommand{\tcr}[1]{\textcolor{red}{#1}}
\newcommand{\CC}{{\mathbb C}}
\newcommand{\NN}{{\mathbb N}}
\newcommand{\RR}{{\mathbb R}}
\DeclareMathOperator{\res}{res}
\DeclareMathOperator{\Span}{span}
\DeclareMathOperator{\cond}{cond}
\newcommand\im{\Im}
\newcommand\re{\Re}
\newcommand\dist{{\mathop{\mbox{\rm dist}}}}
\def\<{\langle}
\def\>{\rangle}
\begin{document}

\title{Zero free regions for Dirichlet series}

\author{Christophe Delaunay}
\author{Emmanuel Fricain}
\author{Elie Mosaki}
\author{Olivier Robert}
\address{Christophe Delaunay,Universit\'e de Lyon; Universit\'e Lyon 1; Institut Camille Jordan CNRS UMR 5208; 43, boulevard du 11 Novembre 1918, F-69622 Villeurbanne}
\email{delaunay@math.univ-lyon1.fr}
\address{Emmanuel Fricain, Universit\'e de Lyon; Universit\'e Lyon 1; Institut Camille Jordan CNRS UMR 5208; 43, boulevard du 11 Novembre 1918, F-69622 Villeurbanne}
\email{fricain@math.univ-lyon1.fr}
\address{Elie Mosaki,Universit\'e de Lyon; Universit\'e Lyon 1; Institut Camille Jordan CNRS UMR 5208; 43, boulevard du 11 Novembre 1918, F-69622 Villeurbanne}
\email{mosaki@math.univ-lyon1.fr}
\address{Olivier Robert
Universit\'e de Lyon, F-42023, Saint-Etienne, France;
Universit\'e de Saint-Etienne, F-42000, Saint-Etienne, France;
Laboratoire de math\'ematiques (LAMUSE, EA 3989)
23, rue du Dr P. Michelon
F-42000, Saint-Etienne France;}
\email{olivier.robert@univ-st-etienne.fr}
\date{}
\thanks{This work was supported by the ANR project no. 07-BLAN-0248 "ALGOL", the ANR project no. 09-BLAN-005801 "FRAB" and the ANR project no. 08-BLAN-0257 "PEPR".}

\keywords{Dirichlet series, Beurling--Nyman criterion, Hardy spaces, zeros of $L$-functions.}

\subjclass[2010]{11M26, 30H10}

\begin{abstract} 
In this paper, we are interested in explicit zero-free discs for some Dirichlet series and we  also study a general Beurling-Nyman criterion for $L$-functions. 
Our results generalize and improve previous results obtained by N.~Nikolski  and by A.~de Roton. As a concrete application, we get, for example, a Beurling-Nyman type criterion for the Siegel zero problem.   
\end{abstract}

\maketitle

%
%
%
%
%
%
%
%

\section{Introduction}

In this article, we are interested in zero-free regions for  functions that are obtained as meromorphic continuation of Dirichlet series $s\mapsto \sum_{n\ge 1}a_nn^{-s}$. Such study arises naturally in various fields of mathematics such as functional analysis and number theory.

The particular case of the Riemann zeta function has been most studied, and is related to the Riemann Hypothesis, asserting that the zeta function does not vanish on the half-plane $\re(s)>1/2$. Several attempts have been made in the direction of solving or reformulating this conjecture.

In his thesis B. Nyman \cite{Nyman} gave  a reformulation of the Riemann Hypothesis by means of functional analysis. More precisely let $\{\cdot\}$ denotes the fractional part and let $\mathcal N$ be the set of functions 
\[
f(x)=\sum_{j=1}^n c_j \left\{\frac{\theta_j}{x}\right\},
\]
where $0<\theta_j\leq 1$, $c_j\in\CC$ and 
\begin{eqnarray}\label{eq:admissibilite-Nyman}
\sum_{j=1}^n c_j\theta_j=0.
\end{eqnarray}
Then Nyman proved that the Riemann Hypothesis holds if and only if the characteristic function $\chi_{(0,1)}$ of the interval $(0,1)$ belongs to the closure of $\mathcal N$ in $L^2(0,1)$. Later on A. Beurling \cite{Beurling} extended this result by proving that if $1<p<+\infty$, then the Riemann zeta function has no zeros in $\Re(s)>1/p$ if and only if $\chi_{(0,1)}$ belongs to the closure of $\mathcal N$ in $L^p(0,1)$. The case $p=1$ has been investigated in \cite{BF}. After the works of Beurling and Nyman, several results occur in this direction, see for example \cite{Nikolski-AIF}, \cite{balazard}, \cite{BBLS2000}, \cite{baez-duarte} and \cite{anne-TAMS}.


In particular, N. Nikolski \cite{Nikolski-AIF} gave, in a way, an effective version of the Beurling-Nyman criterion and  produces explicit zero-free regions for the Riemann zeta function: let $r>0$ and $\lambda\in\CC$ with $\Re(\lambda)>0$ be fixed parameters and let $\tilde{K}_r$ be the subspace of $L^2((0,1),dx/x)$ spanned by functions 
\[
E_{\alpha,r}(x)=x^r\left(\left\{\frac{\alpha}{x}\right\}-\alpha\left\{\frac{1}{x}\right\}\right),\qquad 0<x<1,
\] 
where $0\leq \alpha\leq 1$. Then the zero-free regions obtained by Nikolski are domains of the form 
\begin{equation}\label{nonzero0}
r+\left\{\mu\in\CC:\left|\frac{\mu-\lambda}{\mu+\bar\lambda}\right|^2<1-2\re(\lambda)\tilde{d_r}(\lambda)^2\right\},
\end{equation}
where $\tilde{d_r}(\lambda)=\dist(x^\lambda,\tilde{K}_r)$ is the distance  in $L^2((0,1),dx/x)$ between $x^\lambda$ and the subspace $\tilde{K}_r$. In the case $\lambda=r=1/2$, if $\tilde{d_r}(\lambda)=0$ the region \eqref{nonzero0} corresponds to the half-plane $\re(\mu)>1/2$ and Nikolski recovers Nyman's result (note that in this case, $\tilde{d}_{1/2}(1/2)=0$ if and only if $\chi_{(0,1)}$ belongs to the closure of $\mathcal N$ in $L^2(0,1)$; see Proposition~\ref{prop:ss-espace}).


More recently, A. de Roton \cite{anne-TAMS} generalized Nyman's work to the Selberg class of Dirichlet functions and reformulated the Generalized Riemann Hypothesis. Finally let us mention that in \cite{BBLS2000} and \cite{burnol} interesting lower bounds are obtained. First the authors proved in \cite{BBLS2000} that if $0<\lambda\leq 1$ and $\mathcal N^\sharp_\lambda$ is the closed span in $L^2(0,+\infty)$ of functions 
\[
f(x)=\sum_{j=1}^n c_j\left\{\frac{\theta_j}{x}\right\},\qquad x>0,
\]
where $\lambda<\theta_j\leq 1$, $c_j\in\CC$, then the Riemann Hypothesis holds if and only if $\lim_{\lambda\to 0}d(\lambda)=0$, where $d(\lambda)=\dist(\chi_{(0,1)},\mathcal N_\lambda^\sharp)$. In other words, we can drop the condition \eqref{eq:admissibilite-Nyman} in the Nyman's theorem. Furthermore they proved that 
\[
\liminf_{\lambda\to 0}d(\lambda)\sqrt{\log \frac{1}{|\lambda|}}>0.
\]
Then this lower bound was improved by J.F. Burnol in \cite{burnol} and generalized by de Roton in \cite{Anne-BSMF} and \cite{Anne-JNT}  for the Selberg class.
\bigskip

The aim of our article is to make a further investigation of Nikolski's work in a more general situation including, in particular, the Selberg class. Associated to some 
auxiliary function $\varphi \colon [0,\infty) \rightarrow \CC$, we introduce a class of spaces $K_r$  (in particular, in the case of the Riemann zeta function, $K_r$ 
contains $\tilde{K}_r$ for a special choice of $\varphi$). Then, for a large class of Dirichlet series, we obtain explicit zero-free regions that are larger than Nikolski's 
regions in the case of zeta. For some well chosen parameters, if $\dist(x^\lambda,K_r)=0$ our zero-free regions correspond to $\re(\mu)>r$ and we recover (and in fact
 improve a little bit) the results of de Roton concerning the reformulation of the Generalized Riemann Hypothesis. Finally, we investigate several explicit 
 applications. In particular, we give zero-free regions for the zeta function and Dirichlet $L$-functions ; it should be pointed out that the domains obtained have the  
 property of being explicit, but they do not have the ambition of competing with the classical non-zero regions that are involved, for instance, in the prime number 
 theorem. An other application we obtain is somehow a Beurling-Nyman's criterion for the Siegel's zero problem of Dirichlet $L$-functions. As far as we know, it seems
  to be a new  criterion concerning this question.  
\bigskip
~\\

%
%
%
%
%
%
%
%

\section{Notations and statements of the main results}
For a generic $s\in \CC$, we denote by $\sigma$ (respectively $t$) its real (respectively imaginary) part so that we have $s=\sigma+it$. For $s\in \CC$, we denote by $\Pi_s$ the half-plane defined by
$$
\Pi_s =\Pi_\sigma = \{ z \in \CC \; : \; \re(z)>\sigma \}.
$$
For the sequel, we fix a  Dirichlet series 
$$
L(s)=\sum_{n\geq 1}\frac{a_n}{n^s}
$$
satisfying the following conditions:
\begin{itemize}
\item For every $\varepsilon >0$, we have $a_n \ll_\varepsilon n^\varepsilon$. 
\item There exists $\sigma_0<1$ such that the function $s \mapsto L(s)$ admits a meromorphic continuation to $\re (s)>\sigma_0$ with a unique pole of order $m_L$ at $s=1$.
\item The function $s \mapsto (s-1)^{m_L} L(s)$ is analytic with finite order in $\Pi_{\sigma_0}$.
\end{itemize}
The growth condition on the coefficients $(a_n)_n$ implies that $L(s)$ is an absolutely convergent Dirichlet series for $\re(s)>1$. The second condition tells us that the function $(s-1)^{m_L}L(s)$ can be analytically continued in some half-plane $\Pi_{\sigma_0}$ which contains $\Pi_1$. Note that we do not require 
neither an Euler product nor a functional equation for $L(s)$. 
\\
We also consider an auxiliary function $\varphi:[0,+\infty[ \longrightarrow\CC$ satisfying the following conditions:
\begin{itemize}
\item $\varphi$ is supported on $[0,1]$.
\item $\varphi$ is locally bounded on $(0,1)$. 
\item $\varphi(x)=O(x^{-\sigma_0})$ when $x\to 0$.
\item $\varphi(x)=O((1-x)^{-\sigma_1})$ when $x\to 1^-$, for some $\sigma_1<1/2$.
\end{itemize}
The fact that $\sigma_1<1$ is sufficient for the integral 
$$
\int_0^1 \varphi(t) t^s \frac{dt}{t}
$$
to be  absolutely convergent for $\re(s)>\sigma_0$. Hence, the Mellin transform $\widehat{\varphi}$ of $\varphi$ is analytic on $\Pi_{\sigma_0}$. 
The condition $\sigma_1<1/2$ will become clearer in Lemma~\ref{lemme:intermediaire}. Recall that the (unnormalized) Mellin transform of a  Lebesgue-measurable function $\varphi:[0,+\infty[\to\CC$  is the function $\widehat\varphi$ defined by
$$
\widehat{\varphi} (s) = \int_0^{+\infty} \varphi(t) t^s \frac{dt}{t}  \qquad (s \in \CC),
$$ 
whenever the integral is absolutely convergent. Let $H^2(\Pi_\sigma)$ be the Hardy space of analytic functions $f$ : $\Pi_\sigma \rightarrow \CC$
such that $\Vert f \Vert_2 < \infty$ where
$$
\Vert f \Vert_2 = \sup_{x > \sigma} \left( \int_{-\infty}^{+\infty} \vert f(x +it)\vert^2 dt \right)^{\frac 12}.
$$
Then the (normalized) Mellin transform
$$
\begin{array}{cccc}
{\mathcal M} \; : \; & L^2_*\left((0,1), \frac{dt}{t^{1-2\sigma}}\right) &  \longrightarrow  & H^2(\Pi_\sigma) \\
		                  &  \varphi  										& \longmapsto       & \frac{1}{\sqrt{2\pi}} \widehat{\varphi}
\end{array}
$$
is a unitary operator (use the Paley-Wiener's theorem and the change of variables going from the Fourier transform to the Mellin transform,  for instance see \cite[p. 166] {nikolski2}). Here, we write $L^2_*\left((0,1), \frac{dt}{t^{1-2\sigma}}\right)$ for the subspace of functions in $L^2\left((0,+\infty), \frac{dt}{t^{1-2\sigma}}\right)$  that vanish almost everywhere on $(1,+\infty)$.
We define
\[
\psi(u)=\res\left(L(s)\hat\varphi(s)u^s,s=1\right)-\sum_{n <u }a_n\varphi\left(\frac nu\right) \qquad (u\in\RR_+),
\]
where $\res(F(s),s=1)$ denotes the residue of the meromorphic function $F$ at $s=1$.
The method that is going to be explored depends on the fact that the function $\psi$ belongs to $L^2((1,+\infty), \frac{du}{u^{1+2r}})$ for a certain real number $r>\sigma_0$. The following gives a criterion for this property.
\begin{Thm}\label{Thm:cns-integrabilite-psi} 
Let $r>\sigma_{0}$. If $m_{L} \geq 1$, we assume furthermore that $r\neq 1$. The following are equivalent:
\begin{enumerate}
\item[$\mathrm{(i)}$] The function $\psi$ belongs to $L^2((1,+\infty), \frac{du}{u^{1+2r}})$.
\item[$\mathrm{(ii)}$] The function $t\longmapsto L(r+it)\hat\varphi(r+it)$ belongs to $L^2(\RR)$.
\end{enumerate}
\end{Thm}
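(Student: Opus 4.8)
The plan is to transfer the weighted $L^2$-integrability of $\psi$ into a Hardy-space membership, using the unitarity of the (normalized) Mellin transform recalled above, after first computing the Mellin transform of $\psi$ explicitly.

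First I would establish the identity
\[
\int_1^{+\infty}\psi(u)\,u^{-s}\,\frac{du}{u}=\mathcal{P}(s)-L(s)\hat\varphi(s)\qquad(\re(s)>1),
\]
where $\mathcal{P}$ denotes the principal part of $L\hat\varphi$ at $s=1$ — a rational function whose only pole, of order at most $m_L$, sits at $s=1$. Since $\varphi$ is supported on $[0,1]$, for almost every $u$ the sum $\sum_{n<u}a_n\varphi(n/u)$ coincides with $g(u):=\sum_{n\ge1}a_n\varphi(n/u)$, and an application of Fubini (legitimate for $\re(s)>1$ thanks to $a_n\ll_\varepsilon n^\varepsilon$ and $\varphi(x)=O(x^{-\sigma_0})$) followed by the substitution $t=n/u$ gives $\int_0^{+\infty}g(u)u^{-s}\,du/u=L(s)\hat\varphi(s)$, while the substitution $w=\log u$ gives $\int_1^{+\infty}\res\!\left(L(z)\hat\varphi(z)u^{z},z=1\right)u^{-s}\,du/u=\mathcal{P}(s)$. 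Setting $\Phi:=\mathcal{P}-L\hat\varphi$ and recalling that $\hat\varphi$ is analytic on $\Pi_{\sigma_0}$ while $L$ is meromorphic there with a single pole at $s=1$, the function $\Phi$ is holomorphic on $\Pi_{\sigma_0}\supseteq\Pi_r$, since $\mathcal{P}$ cancels the pole of $L\hat\varphi$.

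Next I would carry out the reduction. Let $\psi_0$ be the function on $(0,+\infty)$ with $\psi_0(v)=\psi(1/v)$ for $0<v<1$ and $\psi_0(v)=0$ for $v\ge1$. The change of variable $v=1/u$ identifies condition (i) with the membership $\psi_0\in L^2_*\!\left((0,1),\frac{dv}{v^{1-2r}}\right)$, and one has $\widehat{\psi_0}(s)=\int_1^{+\infty}\psi(u)u^{-s}\,du/u$, which converges on $\Pi_r$ once (i) holds and, by the identity above together with analytic continuation, equals $\Phi$ there; at this point one uses the hypothesis $r\neq1$ when $m_L\ge1$, which guarantees that the line $\re(s)=r$ avoids the pole and that $\Phi$ restricts to an honest holomorphic function on $\Pi_r$. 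Unitarity of $\mathcal{M}$ (with $\sigma=r$), together with injectivity of the Mellin transform for the converse, then yields the equivalence (i) $\Longleftrightarrow$ $\Phi\in H^2(\Pi_r)$. On the other hand, along the line $\re(s)=r$ we have $\Phi(r+it)=\mathcal{P}(r+it)-L(r+it)\hat\varphi(r+it)$, and since $r\ne1$ the rational function $t\mapsto\mathcal{P}(r+it)$ has no real pole and decays like $|t|^{-1}$, hence lies in $L^2(\RR)$; therefore (ii) $\Longleftrightarrow$ $\Phi(r+i\cdot)\in L^2(\RR)$.

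It thus remains to show that $\Phi\in H^2(\Pi_r)$ if and only if $\Phi(r+i\cdot)\in L^2(\RR)$. The forward implication is just the Paley--Wiener description of $H^2(\Pi_r)$. For the converse — which I expect to be the delicate point — I would first record that $L\hat\varphi\in H^2(\Pi_2)$: this follows by applying the change-of-variable-plus-Mellin argument above to $g$ itself, using the bound $g(u)=O(u^{1+\varepsilon})$ (a consequence of $a_n\ll_\varepsilon n^\varepsilon$, $\sigma_0<1$ and $\sigma_1<1$); together with the fact that $\mathcal{P}$ visibly belongs to $H^2(\Pi_\sigma)$ for every $\sigma\neq1$, this gives $\Phi\in H^2(\Pi_2)$. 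Moreover $\hat\varphi$ is bounded on $\{\re(s)\ge r\}$ (since $t^{\sigma}\le t^{r}$ on $(0,1)$ for $\sigma\ge r$ while $\int_0^1|\varphi(t)|t^{r}\,dt/t<+\infty$ as $r>\sigma_0$), so that, by the hypothesis that $(s-1)^{m_L}L(s)$ has finite order on $\Pi_{\sigma_0}$, the function $\Phi$ has controlled (finite-order, indeed polynomial-on-vertical-lines) growth inside the strip $r\le\re(s)\le2$, while being square-integrable on both of its bounding lines. A Phragm\'{e}n--Lindel\"{o}f / convexity argument for the $L^2$-means $\sigma\mapsto\|\Phi(\sigma+i\cdot)\|_2$ then gives $\sup_{r\le\sigma\le2}\|\Phi(\sigma+i\cdot)\|_2<+\infty$, and hence, together with $\Phi\in H^2(\Pi_2)$, that $\Phi\in H^2(\Pi_r)$. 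The main obstacle is precisely this last step — promoting the $L^2$-bound on the single line $\re(s)=r$ to a bound on all vertical lines of $\Pi_r$, where the finite-order assumption is indispensable; everything else (the Mellin identity, Fubini's theorem, the change of variable $v=1/u$, and the unitarity recalled in the text) is routine.
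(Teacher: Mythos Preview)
Your proposal is correct and follows essentially the same route as the paper: your $\Phi$ is the paper's $H$, your $\psi_0$ is the paper's $\phi$, and the convexity step you call Phragm\'en--Lindel\"of is exactly what the paper invokes via the Hardy--Littlewood--Ingham theorem. The only cosmetic differences are that the paper gets $H\in H^2(\Pi_\sigma)$ for large $\sigma$ via Plancherel and a beta-function estimate rather than through a pointwise bound on $g$, and that it explicitly appeals to continuity of $H$ on the closed half-plane to identify the $H^2$ boundary values with $\Phi(r+i\cdot)$.
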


We will see in Corollary~\ref{Cor:estimation-norme-psi-r-plusgrand1} that the conditions $\mathrm{(i)}$ and $\mathrm{(ii)}$ above are satisfied whenever $r>1$. Furthermore, Theorem \ref{Thm:cns-integrabilite-psi} is a generalization of \cite[Proposition 3.3]{anne-TAMS} when $\varphi = \chi_{(0,1)}$ and $r=1/2$.

We see that the second condition of the theorem above depends on some growth estimates of $L$ in vertical strip which is, in its turn, linked with questions related to the convexity bound  and to the Lindel\"of hypothesis. In particular, if $L$ is a function in the Selberg class, then one can prove that $L$ satisfies the generalized Lindel\"of hypothesis if and only if for every $k\in\mathbb N$, we have
\begin{equation}\label{eq:lindelof-hypothesis}
t\longmapsto \frac{L^k(\frac 12+it)}{\frac12+it}\in L^2(\RR).
\end{equation}
Note that with the special choice of $\varphi=\chi_{(0,1)}$, then $\hat\varphi(s)=1/s$ and the condition $\mathrm{(ii)}$ above for $r=1/2$ means exactly that \eqref{eq:lindelof-hypothesis} is satisfied for $k=1$. Moreover, in \cite{anne-TAMS} using the functional equation, it is shown that the condition $\psi\in L^2((1,+\infty),\frac{du}{u^2})$ (in the case when $\varphi=\chi_{(0,1)}$) is necessary for the generalized Riemann Hypothesis for $L$-functions in the Selberg class. In \cite{Anne-Acta-Arithm}, it is also shown that the condition on $\psi$ (still with $\varphi=\chi_{(0,1)}$) is satisfied for $L$-functions in the Selberg class of degree less than $4$.
\medskip
\\
Fix an integer $m\geq 0$ and let $W=\bigcup_{n\geq 1} (0,1]^n$. We say that $\alpha \in W$ is of length $n$ if $\alpha$ belongs to $(0,1]^n$. For each $\alpha$ in $W$, its length is denoted by $\ell(\alpha)$. Now let $\alpha\in W$ and $c\in\CC^{\ell(\alpha)}$, we say that $A=(\alpha,c)$ is an $m$-admissible sequence if  
\begin{eqnarray}\label{eq:admissibilite-general}
\sum_{j=1}^{\ell(\alpha)} c_j \alpha_j (\log \alpha_j)^k =0 \mbox{ for all } 0\leq k \leq m-1 \; .
\end{eqnarray}
Furthermore, $A$ is said to be non-trivial if $c\in \CC^{\ell(\alpha)}\setminus\{(0,\dots,0)\}$.
%
%

It is easy to prove (see Lemma~\ref{Lem:cle-zeros-communs} and Lemma~\ref{lemma_admissible}) that for any fixed $\ell>m$, there are  infinitely many non-trivial $m$-admissible sequences of length $\ell$. We also easily see that every $A=(\alpha,c)$ is $0$-admissible. Note that the notion of  admissible  sequences had been introduced in \cite{anne-TAMS} and it is a generalization of condition~\eqref{eq:admissibilite-Nyman} which appears in Nyman's theorem and which corresponds 
to ~$m=1$.\bigskip
~\\
{\bf From now on, we assume that $r$ is chosen such that $\psi \in L^2((1,+\infty), \frac{du}{u^{1+2r}})$.}\bigskip
~\\
Then we associate to each $m_L$-admissible sequence $A=(\alpha,c)$ the function $f_{A,r}$ defined by
$$
f_{A,r}(t) = t^{r-\sigma_0} \sum_{j=1}^{\ell(\alpha)} c_j \psi\left(\frac{\alpha_j}{t}\right) , \qquad t >0.
$$
We will show that $f_{A,r} \in L^2_*((0,1), \frac{dt}{t^{1-2\sigma_0}})$ and we let
$$
K_r = \Span\{ f_{A,r} \; : \; A \mbox{~a~}m_L\mbox{-admissible~sequence} \}
$$
and 
\[
d_r(\lambda)=\dist\left(t^{\bar\lambda-2\sigma_0}\chi_{(0,1)},K_r\right) \qquad (\lambda\in\Pi_{\sigma_0}),
\]
where the (closed) span and the distance are taken with respect to the space $L^2_*((0,1), \frac{dt}{t^{1-2\sigma_0}})$.  Let us remark that we trivially have $d_r^2(\lambda)\leq 1/(2\re(\lambda)-2\sigma_0)$.

One of our  main theorems is the following which gives zero-free discs for the function~$L$.
\begin{Thm}\label{Thm:disque-sans-zeros}
Let $\lambda\in\Pi_{\sigma_0}$. Then $L$ does not vanish on $r-\sigma_0+D_r(\lambda)$, where
\[
D_r(\lambda):=\left\{\mu\in\CC:\left|\frac{\mu-\lambda}{\mu+\bar\lambda-2\sigma_0}\right|<\sqrt{1-2(\re(\lambda)-\sigma_0)d^2_r(\lambda)}\right\}.
\]
\end{Thm}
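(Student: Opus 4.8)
The strategy is the standard Nikolski-type reproducing-kernel argument, transported through the unitary Mellin transform $\mathcal{M}$ onto $H^2(\Pi_{\sigma_0})$. First I would compute the Mellin transform of the generating functions $f_{A,r}$. Writing $\psi(\alpha_j/t)$ and using that for $\re(s)>\sigma_0$ one has, by the defining series/residue expression for $\psi$ together with the growth hypotheses on $\varphi$ and on $L$ (this is exactly where $r$ being in the good range and $\psi\in L^2((1,\infty),du/u^{1+2r})$ is used), an identity of the shape
\[
\mathcal{M}\big(t^{r-\sigma_0}\psi(\alpha/\cdot)\big)(s) \;=\; \alpha^{\,s+r-\sigma_0}\,\frac{L(s+r-\sigma_0)\,\widehat{\varphi}(s+r-\sigma_0)}{\text{(elementary factor)}}
\]
up to a constant and a shift, so that
\[
\mathcal{M}(f_{A,r})(s) \;=\; c(s)\,L(s+r-\sigma_0)\,\widehat{\varphi}(s+r-\sigma_0)\sum_{j=1}^{\ell(\alpha)} c_j\,\alpha_j^{\,s+r-\sigma_0},
\]
with the $m_L$-admissibility of $A$ precisely cancelling the pole of $L$ of order $m_L$ at $s+r-\sigma_0=1$, so the right-hand side is genuinely in $H^2(\Pi_{\sigma_0})$. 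I expect this computation — tracking the residue term, the truncated sum, and justifying the interchange of Mellin transform and summation — to be the main technical obstacle; it is the analytic heart of the theorem, and the role of the hypothesis $\sigma_1<1/2$ (via Lemma~\ref{lemme:intermediaire}) and of Theorem~\ref{Thm:cns-integrabilite-psi} will surface here.

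Next I would identify $\mathcal{M}(K_r)$ inside $H^2(\Pi_{\sigma_0})$ as (the closure of) $L(s+r-\sigma_0)\widehat{\varphi}(s+r-\sigma_0)$ times the closed linear span of the exponential-type Dirichlet polynomials $\sum_j c_j\alpha_j^{s}$ subject to the admissibility constraints, modulo the elementary factor. By a Müntz/Nyman-type density argument (the content of Lemma~\ref{Lem:cle-zeros-communs} and Lemma~\ref{lemma_admissible}), these polynomials are "almost all" of $H^2$: more precisely, their closed span is the whole of the relevant invariant subspace except for the finitely many linear constraints, which translate into vanishing of a function together with its first $m_L-1$ derivatives at the point $1-(r-\sigma_0)$. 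The upshot is a clean description: a function $g\in H^2(\Pi_{\sigma_0})$ is orthogonal to $\mathcal{M}(K_r)$ essentially iff $g/\big(L\widehat{\varphi}\big)(\cdot+r-\sigma_0)$, suitably interpreted, extends analytically and the orthogonality forces either a zero of $L$ or a constraint at the distinguished point.

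Then comes the reproducing-kernel step. The function $t^{\bar\lambda-2\sigma_0}\chi_{(0,1)}$ has Mellin transform a constant times the Cauchy/Szegő kernel $k_\lambda(s)=\frac{1}{s+\bar\lambda-2\sigma_0}$ of $H^2(\Pi_{\sigma_0})$ (up to the shift built into the weight), so $d_r(\lambda)$ is the distance in $H^2(\Pi_{\sigma_0})$ from this kernel to $\mathcal{M}(K_r)$. Now suppose, for contradiction, that $L$ vanishes at a point of $r-\sigma_0 + D_r(\lambda)$, say at $r-\sigma_0+\mu$ with $\mu\in D_r(\lambda)$. Then $L(\cdot+r-\sigma_0)$ vanishes at $\mu$, which makes the divided-kernel $\frac{k_\mu}{\text{(factor)}}$, or rather the evaluation functional at $\mu$, annihilate $\mathcal{M}(K_r)$ (every element of $\mathcal{M}(K_r)$ has a zero at $\mu$ coming from the $L$-factor, provided $\mu$ is not the special constrained point — and if it is, one argues with derivatives). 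Hence the normalized reproducing kernel $\hat k_\mu$ at $\mu$ is a unit vector orthogonal to $\mathcal{M}(K_r)$, so for the orthogonal projection $P$ onto $\overline{\mathcal{M}(K_r)}$,
\[
d_r(\lambda)^2 \;=\; \|k_\lambda - Pk_\lambda\|^2 \;\geq\; \big|\langle k_\lambda,\hat k_\mu\rangle\big|^2 \;=\; \frac{|k_\mu(\lambda)|^2}{k_\mu(\mu)\,k_\lambda(\lambda)}\cdot k_\lambda(\lambda) \cdot \frac{1}{k_\lambda(\lambda)}.
\]

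The final step is pure bookkeeping with the explicit Szegő kernel of a half-plane: one has $k_\lambda(\lambda)=\frac{1}{2\re(\lambda)-2\sigma_0}$ and $k_\mu(\lambda)=\frac{1}{\lambda+\bar\mu-2\sigma_0}$, so the displayed lower bound becomes
\[
d_r(\lambda)^2 \;\geq\; \frac{1}{2\re(\lambda)-2\sigma_0}\cdot\Big(1-\Big|\frac{\mu-\lambda}{\mu+\bar\lambda-2\sigma_0}\Big|^2\Big),
\]
a one-line algebraic identity for half-plane kernels (it is the hyperbolic-metric form $1-|\langle\hat k_\lambda,\hat k_\mu\rangle|^2 = 1 - \big|\frac{\mu-\lambda}{\mu+\bar\lambda-2\sigma_0}\big|^2$). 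Rearranging gives
\[
\Big|\frac{\mu-\lambda}{\mu+\bar\lambda-2\sigma_0}\Big| \;\geq\; \sqrt{1-2(\re(\lambda)-\sigma_0)\,d_r(\lambda)^2},
\]
which contradicts $\mu\in D_r(\lambda)$. This proves $L$ has no zero in $r-\sigma_0+D_r(\lambda)$. The only points needing extra care are (a) the borderline case where the prospective zero coincides with the distinguished constrained point $1-(r-\sigma_0)$, handled by the derivative version of the argument using full $m_L$-admissibility, and (b) making sure every step of the $H^2$-duality is legitimate on the half-plane rather than the disc, which is routine after the Mellin-unitarity is in place; as noted, step one (the Mellin computation of $\mathcal{M}(f_{A,r})$ and the clean factorization) is where the real work lies.
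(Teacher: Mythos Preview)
Your core argument is correct and is essentially the same reproducing-kernel method as the paper's, just organized a bit differently. The paper assumes $L(\mu+r-\sigma_0)=0$, notes that then every $h\in E_r:=\mathcal M K_r$ vanishes at $\mu$, factors $h=b_\mu g$ through the elementary Blaschke factor (their Lemma~\ref{Lem:elementaire-cle-espace-Hardy}), bounds $|h(\lambda)|\le |b_\mu(\lambda)|\,\|k_\lambda\|_2$, and then converts $\sup_{\|h\|=1}|h(\lambda)|=\|P_{E_r}k_\lambda\|$ via Pythagoras into the $d_r(\lambda)$-expression. Your version replaces the Blaschke factorization by the equivalent statement ``$k_\mu\perp E_r$, hence $\operatorname{dist}(k_\lambda,E_r)\ge |\langle k_\lambda,\hat k_\mu\rangle|$'' and then the half-plane kernel identity; this is the same computation seen from the orthogonal complement, and in fact slightly shorter since you avoid the factorization lemma.

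Two places where you over-engineer. First, the whole second paragraph about identifying $\mathcal M(K_r)$ via a M\"untz/Nyman density argument and Lemmas~\ref{Lem:cle-zeros-communs}--\ref{lemma_admissible} is not needed for \emph{this} theorem: all that is used here is the one-line consequence of Theorem~\ref{Thm:cle-transforme-mellin} that $L(\mu+r-\sigma_0)=0$ forces $h(\mu)=0$ for every $h\in E_r$. The density machinery only enters in the Beurling--Nyman reformulation (Theorem~\ref{Thm:reformulation-HRG}). Second, your caveat~(a) about the ``distinguished point'' $\mu=1-r+\sigma_0$ is a non-issue: if $m_L\ge 1$ then $L$ has a pole at $1$ and cannot vanish there, while if $m_L=0$ there is no admissibility constraint and the formula $h_{A,r}(\mu)=0$ holds directly; no derivative argument is required. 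Also, the ``elementary factor'' you leave unspecified in the Mellin computation is in fact absent: the exact formula is $\widehat{f_{A,r}}(s)=-L(s+r-\sigma_0)\hat\varphi(s+r-\sigma_0)\sum_j c_j\alpha_j^{s+r-\sigma_0}$ (Theorem~\ref{Thm:cle-transforme-mellin}).
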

Note that the zero-free regions which appear in Theorem~\ref{Thm:disque-sans-zeros} are discs (empty if $d_r^2(\lambda)=1/(2\re(\lambda)-2\sigma_0)$) or half-planes. More precisely, for $\lambda=a+ib\in\Pi_{\sigma_0}$ ($a>\sigma_0,b\in\RR$) and $R\in [0,1]$, then the set
\[
\left\{\mu\in\CC:\left|\frac{\mu-\lambda}{\mu+\bar\lambda-2\sigma_0}\right|<R\right\}
\]
is the open (euclidean) disc whose center is $\Omega=\left(\frac{a+R^2(a-2\sigma_0)}{1-R^2},b\right)$ and radius is $\frac{2R(a-\sigma_0)}{1-R^2}$ if $R\in [0,1[$; if $R=1$ this set is the half-plane $\Pi_{\sigma_0}$. In both cases, we easily see that this set is contained in the half-plane $\Pi_{\sigma_0}$.  

As a corollary of the proof of Theorem \ref{Thm:disque-sans-zeros}, we obtain an other explicit version.
 \begin{Cor}\label{Cor2:disque-sans-zeros}
Let $\lambda\in\Pi_{\sigma_0}$. Then $L$ does not vanish on the disc
\[
r-\sigma_0+\left\{\mu\in\CC:\left|\frac{\mu-\lambda}{\mu+\bar\lambda-2\sigma_0}\right|<\sqrt{2(\re(\lambda)-\sigma_0)}\frac{|\widehat{f_{A,r}}(\lambda)|}{\|f_{A,r}\|_2}\right\},
\]
for any $m_L$-admissible sequence $A$. 
\end{Cor}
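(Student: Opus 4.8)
The plan is to re-run the argument behind Theorem~\ref{Thm:disque-sans-zeros}, but with the orthogonal projection onto the whole subspace $K_r$ replaced by the projection onto the single line $\CC f_{A,r}$; this is exactly what turns the (non-explicit) distance $d_r(\lambda)$ into the explicit quantity $|\widehat{f_{A,r}}(\lambda)|/\|f_{A,r}\|_2$. From the proof of Theorem~\ref{Thm:disque-sans-zeros} I would quote the key identity: for every $m_L$-admissible $A=(\alpha,c)$ and every $\mu\in\Pi_{\sigma_0}$,
\[
\widehat{f_{A,r}}(\mu)=\Big(\sum_{j=1}^{\ell(\alpha)}c_j\,\alpha_j^{\,\mu+r-\sigma_0}\Big)\,L(\mu+r-\sigma_0)\,\widehat{\varphi}(\mu+r-\sigma_0),
\]
the right-hand side being analytic on $\Pi_{\sigma_0}$ because the $m_L$ admissibility conditions force the exponential sum to have a zero of order $\ge m_L$ at the point $1$, which cancels the pole of $L$. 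The only consequence I shall use is that \emph{if $L(\mu+r-\sigma_0)=0$ then $\widehat{f_{A,r}}(\mu)=0$}. I would also record two elementary computations in $L^2_*((0,1),\tfrac{dt}{t^{1-2\sigma_0}})$: setting $h_\mu:=t^{\bar\mu-2\sigma_0}\chi_{(0,1)}$, since $f_{A,r}$ is supported on $(0,1)$ one has $\langle f_{A,r},h_\mu\rangle=\widehat{f_{A,r}}(\mu)$, while a one-line integration gives $\langle h_\lambda,h_\mu\rangle=1/(\mu+\bar\lambda-2\sigma_0)$ and $\|h_\mu\|_2^2=1/(2(\re(\mu)-\sigma_0))$.

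For the Hilbert-space step, fix an $m_L$-admissible $A$ and suppose $L$ vanishes at $\mu+r-\sigma_0$ for some $\mu\in\Pi_{\sigma_0}$; then $f_{A,r}\perp h_\mu$. (If $\mu=\lambda$ this forces $\widehat{f_{A,r}}(\lambda)=0$, so the disc in the statement is empty and there is nothing to prove; hence assume $\mu\neq\lambda$, so that $h_\lambda\notin\CC h_\mu$.) Subtracting from $h_\lambda$ its component along $h_\mu$ does not change $\langle f_{A,r},h_\lambda\rangle$, so Cauchy--Schwarz yields $|\widehat{f_{A,r}}(\lambda)|=|\langle f_{A,r},h_\lambda\rangle|\le\|f_{A,r}\|_2\,\dist(h_\lambda,\CC h_\mu)$, whence
\[
\frac{|\widehat{f_{A,r}}(\lambda)|^2}{\|f_{A,r}\|_2^2}\le \dist(h_\lambda,\CC h_\mu)^2=\|h_\lambda\|_2^2-\frac{|\langle h_\lambda,h_\mu\rangle|^2}{\|h_\mu\|_2^2}=\frac{1}{2(\re(\lambda)-\sigma_0)}-\frac{2(\re(\mu)-\sigma_0)}{|\mu+\bar\lambda-2\sigma_0|^2}.
\]

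Finally I would multiply through by $2(\re(\lambda)-\sigma_0)$ and invoke the elementary identity (the same one used in the proof of Theorem~\ref{Thm:disque-sans-zeros}, namely $(a+c-2\sigma_0)^2-(c-a)^2=4(a-\sigma_0)(c-\sigma_0)$ with $\lambda=a+ib$, $\mu=c+id$)
\[
\frac{4(\re(\lambda)-\sigma_0)(\re(\mu)-\sigma_0)}{|\mu+\bar\lambda-2\sigma_0|^2}=1-\left|\frac{\mu-\lambda}{\mu+\bar\lambda-2\sigma_0}\right|^2
\]
to rewrite the previous inequality as $2(\re(\lambda)-\sigma_0)\,|\widehat{f_{A,r}}(\lambda)|^2/\|f_{A,r}\|_2^2\le\big|(\mu-\lambda)/(\mu+\bar\lambda-2\sigma_0)\big|^2$. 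This says precisely that $\mu$ lies \emph{outside} the open disc in the statement, i.e. $\mu+r-\sigma_0\notin r-\sigma_0+\{\cdots\}$, which proves the corollary. The step that needs the most care is the first one: citing correctly from the proof of Theorem~\ref{Thm:disque-sans-zeros} the formula for $\widehat{f_{A,r}}$ — in particular the cancellation of the pole of $L$ at $1$ by the admissibility of $A$ and the resulting analyticity on $\Pi_{\sigma_0}$ — so that $\widehat{f_{A,r}}$ genuinely vanishes at every zero of $s\mapsto L(s+r-\sigma_0)$; the remainder is routine one-dimensional Hilbert-space geometry together with the algebraic identity above.
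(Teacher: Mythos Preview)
Your argument is correct. The overall architecture is the same as the paper's (contrapositive: a zero of $L$ at $\mu+r-\sigma_0$ forces $\widehat{f_{A,r}}(\mu)=0$, and this vanishing constrains $|\widehat{f_{A,r}}(\lambda)|$), but the implementation of the last step is genuinely different. The paper simply specializes the intermediate estimate from the proof of Theorem~\ref{Thm:disque-sans-zeros}: there the inequality $|h(\lambda)|\le \big|\tfrac{\lambda-\mu}{\lambda+\bar\mu-2\sigma_0}\big|\,\|h\|_2\,\|k_\lambda\|_2$ for any $h\in H^2(\Pi_{\sigma_0})$ with $h(\mu)=0$ is obtained via the Blaschke factorization Lemma~\ref{Lem:elementaire-cle-espace-Hardy}, and one then takes $h=h_{A,r}=\tfrac{1}{\sqrt{2\pi}}\widehat{f_{A,r}}$ and uses $\|k_\lambda\|_2^{-1}=2\sqrt{\pi(\re(\lambda)-\sigma_0)}$. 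You bypass the Hardy-space factorization entirely, staying in $L^2_*((0,1),\tfrac{dt}{t^{1-2\sigma_0}})$: the orthogonality $f_{A,r}\perp h_\mu$ plus Cauchy--Schwarz gives $|\widehat{f_{A,r}}(\lambda)|\le\|f_{A,r}\|_2\,\dist(h_\lambda,\CC h_\mu)$, and you compute this one-dimensional distance by hand, the algebraic identity $|\mu+\bar\lambda-2\sigma_0|^2-|\mu-\lambda|^2=4(\re(\lambda)-\sigma_0)(\re(\mu)-\sigma_0)$ doing the work that the inner property of the Blaschke factor does in the paper. Your route is more self-contained (it does not invoke Lemma~\ref{Lem:elementaire-cle-espace-Hardy}), while the paper's is shorter because the Hardy-space machinery has already been set up. Two minor remarks: the formula for $\widehat{f_{A,r}}$ you quote is Theorem~\ref{Thm:cle-transforme-mellin} (with a harmless missing minus sign), not literally ``from the proof of Theorem~\ref{Thm:disque-sans-zeros}''; and that proof does not display the algebraic identity you call ``the same one'' --- it is hidden inside the Blaschke factorization --- so you should present it as your own computation rather than a citation.
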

Note that taking $L(s)=\zeta(s)$, $\ell(\alpha)=2$ and $\varphi= \chi_{(0,1)}$ (so that $\sigma_0=\sigma_1=0$) we recover exactly the results of Nikolski (\cite{Nikolski-AIF}). Now, taking $\varphi(t)=(1-t)^{-\sigma_1}\chi_{(0,1)}(t)$, we obtain larger zero-free discs  whenever $\im(\lambda)$ is large enough. We refer the reader to Section \ref{examples} for further discussions about the Rieman zeta function and other examples. 

We will see in Theorem~\ref{Thm:cle-transforme-mellin} that 
$$
\widehat{f_{A,r}}(\lambda)= L(\lambda+r-\sigma_0)\hat\varphi(\lambda+r-\sigma_0)\left(\sum_{j=1}^n c_j \alpha_j^{\lambda+r-\sigma_0}\right), \qquad \lambda \in \Pi_{\sigma_0}.
$$ 
Hence Corollary~\ref{Cor2:disque-sans-zeros} can be understood as  follows: let $\lambda\in\Pi_{\sigma_0}$ such that $L(\lambda+r-\sigma_0) \neq 0$; then there is a small neighborhood  of $\lambda+r-\sigma_0$ free of zeros for  $L$. Of course, this is an obvious consequence of the continuity of the function $L$ but the interest of Corollary~\ref{Cor2:disque-sans-zeros} is that it gives an explicit neighborhood where the function $L$ does not vanish; moreover, this explicit neighborhood is expressed in terms of the values of $L$ and in particular it does not use any estimates of the derivatives.

Finally, we obtain a general Beurling-Nyman type theorem.
\begin{Thm}\label{Thm:reformulation-HRG}
Suppose that the function $\hat\varphi$ does not vanish on the half-plane $\Pi_{r}$, that 
$\limsup_{x\to+\infty}\frac{\log|\hat\varphi(x+r-\sigma_0)|}{x} = 0$ and that $a_1 \neq 0$. Then the following assertions are equivalent:
\begin{enumerate}
 \item The function $L$  does not vanish on the half-plane $\Pi_{r}$.
\item There exists $\lambda\in\Pi_{\sigma_0}$ such that  $d_r(\lambda)=0$.
\item For all $\lambda\in\Pi_{\sigma_0}$, we have $d_r(\lambda)=0$.
\item We have $K_r = L^2_*((0,1),dt/t^{1-2\sigma_0})$.
\end{enumerate}
\end{Thm}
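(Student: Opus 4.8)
The plan is to run the equivalences through the Mellin transform picture, reducing everything to a statement about the Hardy space $H^2(\Pi_{\sigma_0})$ and a reproducing-kernel / Lax--Milgram type density argument, exactly as in Nikolski's and de Roton's approach but with the parameters $\sigma_0$, $\varphi$ in place of $0$, $\chi_{(0,1)}$. First I would transport the space $L^2_*((0,1),dt/t^{1-2\sigma_0})$ to $H^2(\Pi_{\sigma_0})$ via the unitary normalized Mellin transform $\mathcal M$ recalled in Section~2. Under $\mathcal M$, the generator $f_{A,r}$ is sent (by Theorem~\ref{Thm:cle-transforme-mellin}) to $s\mapsto \frac{1}{\sqrt{2\pi}}L(s+r-\sigma_0)\hat\varphi(s+r-\sigma_0)P_A(s)$ where $P_A(s)=\sum_j c_j\alpha_j^{s+r-\sigma_0}$, and the test vector $t^{\bar\lambda-2\sigma_0}\chi_{(0,1)}$ is sent to a multiple of the Cauchy/reproducing kernel $k_\lambda(s)=\frac{1}{s+\bar\lambda-2\sigma_0}$ of $H^2(\Pi_{\sigma_0})$; indeed $d_r(\lambda) = \mathrm{dist}(k_\lambda,\,\mathcal M K_r)$ up to the explicit normalizing constant, and $\langle k_\lambda,\cdot\rangle$ is evaluation at $\lambda$. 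So conditions (2),(3),(4) become: (2') $k_\lambda\in\clos(\mathcal M K_r)$ for some $\lambda$; (3') for all $\lambda$; (4') $\clos(\mathcal M K_r)=H^2(\Pi_{\sigma_0})$.

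The implications $(4)\Rightarrow(3)\Rightarrow(2)$ are trivial since the kernels $k_\lambda$ lie in $H^2(\Pi_{\sigma_0})$, so the real content is $(2)\Rightarrow(1)$, $(1)\Rightarrow(4)$ (and possibly $(1)\Rightarrow(3)$ directly). For $(1)\Rightarrow(4)$: suppose $L$ has no zero in $\Pi_r$; I want to show $\clos(\mathcal M K_r)=H^2(\Pi_{\sigma_0})$, equivalently that the only $g\in H^2(\Pi_{\sigma_0})$ orthogonal to every $\mathcal M f_{A,r}$ is $g=0$. Orthogonality to $\mathcal M f_{A,r}$ for all $m_L$-admissible $A$ means $\int_{\RR} \overline{g(\sigma_0+it)}\,L(\sigma_0+r-\sigma_0+it)\hat\varphi(\dots)\sum_j c_j\alpha_j^{\dots}\,dt=0$; using Lemma~\ref{Lem:cle-zeros-communs}/Lemma~\ref{lemma_admissible} (infinitely many admissible sequences, whose exponent functions $\alpha\mapsto\alpha^{s}$ separate points / span a dense set modulo the $m_L$ linear admissibility constraints $\sum c_j\alpha_j(\log\alpha_j)^k=0$) one deduces that the function $h(w):= \widehat{(\text{something})}$ — more precisely the analytic function $w\mapsto \langle g,\,\mathcal M(\text{kernel at }w)\rangle$ times $L(w+r-\sigma_0)\hat\varphi(w+r-\sigma_0)$ — must vanish identically except possibly for a polynomial-in-$\log$ ambiguity of degree $<m_L$ coming from the admissibility conditions. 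Because $\hat\varphi$ does not vanish on $\Pi_r$ and $L$ does not vanish on $\Pi_r$ by hypothesis (1), one divides these factors out; the growth hypothesis $\limsup_{x\to\infty}\frac{\log|\hat\varphi(x+r-\sigma_0)|}{x}=0$ guarantees that dividing by $\hat\varphi$ does not destroy the Hardy-class/finite-exponential-type control, so the quotient is forced to be $0$, hence $g\equiv 0$. The pole of $L$ at $s=1$ contributes exactly the residue term already built into $\psi$, and the order-$m_L$ pole is precisely why one restricts to $m_L$-admissible sequences — the $m_L$ admissibility conditions kill the corresponding $m_L$-dimensional obstruction. The use of $a_1\neq0$ enters in ensuring $\psi(u)$ genuinely "starts" at the right place so that the family $\{f_{A,r}\}$ is nondegenerate (equivalently that $\mathcal M K_r$ is not contained in a proper shift-invariant subspace forced by a common inner factor $n_1^{-s}$, etc.).

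For $(2)\Rightarrow(1)$: assume $d_r(\lambda_0)=0$ for some $\lambda_0\in\Pi_{\sigma_0}$, i.e. $k_{\lambda_0}\in\clos(\mathcal M K_r)$. If $L$ had a zero $\rho\in\Pi_r$, then every $\mathcal M f_{A,r}$ vanishes at $w_0:=\rho-(r-\sigma_0)\in\Pi_{\sigma_0}$ (since the factor $L(w+r-\sigma_0)$ vanishes there and, by the non-vanishing hypothesis on $\hat\varphi$ and analyticity, nothing cancels the zero), so the whole closed span $\clos(\mathcal M K_r)$ consists of functions vanishing at $w_0$; but $k_{\lambda_0}(w_0)=\frac{1}{w_0+\bar\lambda_0-2\sigma_0}\neq0$, a contradiction. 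This also immediately gives $(2)\Rightarrow(3)$ once we have $(2)\Rightarrow(1)\Rightarrow(4)\Rightarrow(3)$, closing the cycle. The main obstacle I anticipate is the bookkeeping in $(1)\Rightarrow(4)$: precisely controlling the passage from "$g\perp \mathcal M f_{A,r}$ for all admissible $A$" to "an associated analytic function vanishes identically", i.e. showing that the span of the point-evaluation-type functionals $\{P_A\}$ over $m_L$-admissible $A$, after dividing by the (non-vanishing but possibly fast-decaying) factor $L\hat\varphi$, is weak-$*$ dense in the relevant dual — this is where the growth condition on $\hat\varphi$, the finite order of $(s-1)^{m_L}L(s)$, and a Müntz/completeness argument for the exponentials $\{\alpha^{w}:0<\alpha\le1\}$ modulo the $m_L$ admissibility relations all have to be combined carefully, together with an appeal to a Krein/Beurling-type theorem characterizing cyclic vectors (equivalently, outer-ness up to the explicit inner factor) in $H^2(\Pi_{\sigma_0})$.
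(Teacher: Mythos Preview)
Your plan for the trivial implications $(4)\Rightarrow(3)\Rightarrow(2)$ and for $(2)\Rightarrow(1)$ is correct and matches the paper: the paper invokes Theorem~\ref{Thm:disque-sans-zeros} for $(2)\Rightarrow(1)$, but what actually happens in that proof is exactly your reproducing-kernel argument (a zero of $L$ at $\rho\in\Pi_r$ forces every element of $\mathcal M K_r$ to vanish at $\rho-r+\sigma_0$, while $k_{\lambda_0}$ does not).

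The gap is in your $(1)\Rightarrow(4)$. You propose a direct duality argument: take $g\perp\mathcal M K_r$, use completeness of the admissible Dirichlet polynomials $P_A$ to deduce that some associated analytic function vanishes, then divide by $L\hat\varphi$. The problem is the dividing step. Even if $L$ and $\hat\varphi$ are zero-free on $\Pi_r$, the function $s\mapsto L(s+r-\sigma_0)\hat\varphi(s+r-\sigma_0)$ need not be \emph{outer} in $H^\infty(\Pi_{\sigma_0})$; it could carry a singular inner factor, in which case multiplication by it is not surjective on $H^2$ and your orthogonal $g$ need not be zero. Your sentence ``the growth hypothesis \dots\ guarantees that dividing by $\hat\varphi$ does not destroy the Hardy-class control'' is exactly where this issue hides, and as stated it is not an argument. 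Likewise, your explanation of the role of $a_1\neq 0$ (``$\psi$ starts at the right place'') is not how the hypothesis is actually used.

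The paper organizes $(1)\Rightarrow(4)$ around a structural observation you do not make: $K_r$ is invariant under the multiplicative shifts $S_\beta f(t)=\beta^{-\sigma_0}f(t/\beta)$, $0<\beta\le 1$ (because replacing $(\alpha_j)$ by $(\beta\alpha_j)$ sends admissible sequences to admissible sequences). Conjugating by $\mathcal M$ turns $S_\beta$ into $\tau_{-\log\beta}$, so $E_r:=\mathcal M K_r$ is $\tau_v$-invariant for $v\ge 0$, and the Lax--Beurling theorem gives $E_r=\Theta H^2(\Pi_{\sigma_0})$ for an inner $\Theta=BS$. Then (i) the hypothesis that $\hat\varphi$ is zero-free on $\Pi_r$ plus Lemma~\ref{Lem:cle-zeros-communs} shows the common zero set of $E_r$ is exactly $\{s:L(s+r-\sigma_0)=0\}$, which is empty by (1), so the Blaschke part $B\equiv 1$; (ii) since the generators $h_{A,r}$ extend analytically across $\sigma_0+i\RR$, the singular factor must be $S(s)=e^{-a(s-\sigma_0)}$ for some $a\ge 0$; and (iii) choosing an admissible $A$ with $\alpha_{\ell}=1$, $c_\ell\neq 0$ (Lemma~\ref{Lem:cle-zeros-communs}), one has $|h_{A,r}(x)|\sim |a_1|\,|c_\ell|\,|\hat\varphi(x+r-\sigma_0)|$ as $x\to\infty$, so $\limsup_{x\to\infty}\frac{\log|h_{A,r}(x)|}{x}\ge \limsup\frac{\log|\hat\varphi(x+r-\sigma_0)|}{x}=0$, while $h_{A,r}=S\cdot h$ with $h\in H^2$ forces this $\limsup$ to be $\le -a$; hence $a=0$. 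This is where both $a_1\neq 0$ and the growth hypothesis on $\hat\varphi$ are genuinely used.

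In short: your outline lacks the shift-invariance step that makes Lax--Beurling applicable, and without it your ``divide by $L\hat\varphi$'' plan cannot rule out a singular inner obstruction. If you insert the observation $S_\beta K_r\subset K_r$ and then follow the Blaschke/singular analysis above, the proof goes through; the M\"untz-type completeness argument you sketch is neither needed nor, as written, sufficient.
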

Taking $L(s)=\zeta(s)$, $\ell(\alpha)=2$, $r=\lambda=1/2$ and $\varphi=\chi_{(0,1)}$ (so that we can take $\sigma_0=\sigma_1=0$), we obtain Nyman's theorem. Now, taking $\varphi(t) = (1-t)^{-\sigma_1} \chi_{(0,1)}(t)$ we extend the results of \cite{anne-TAMS} (see also Section \ref{examples} for more details). 

An other consequence of Theorem \ref{Thm:disque-sans-zeros} is, in a way, a Beurling-Nyman criterion for Dirichlet $L$-functions. More precisely, let $\chi$ be a Dirichlet character with conductor $q$ and $L(\chi,s)$ its $L$-function. Then, for $1/2 \leq r < 1$, we define $d_r$ by 
$$
d_r=\min_{\ell, c, \alpha} \left( \int_0^1 \left| t^{1-r} - t^r\sum_{j=1}^{\ell}  c_j \sum_{n<\alpha_j/t} \chi(n)\right|^2 \frac{dt}{t} \right)^{\frac{1}{2}}
$$
where the minimum is taken over all $\ell \geq 0$, $c=(c_j) \in \CC^\ell$ and $\alpha=(\alpha_j) \in (0,1]^\ell$. One can show (see Proposition~\ref{Prop:dr-dirichlet}) that $d_r^2 < 1/(2-2r)$. We have 
\begin{Thm}\label{dirichlet} If 
$$
d_r^2 \leq \frac{1}{2-2r} - \frac{C^2}{2 (\log q)^2 (1-r)^3}
$$
for some (absolute) constant $C$ and some $1/2 \leq r \leq 1$, then $L(\chi,\sigma)$ does not vanish in the real-interval $\sigma >1- C/\log q$.
\end{Thm}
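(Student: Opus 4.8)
The plan is to recognise $d_r$ as a value of the distance $d_r(\lambda)$ from the general framework and then to invoke Theorem~\ref{Thm:disque-sans-zeros}. First, since $d_r^2\ge 0$, the hypothesis forces its right-hand side to be nonnegative, i.e. $(1-r)\log q\ge C$; in particular $q\ge 2$, $1/2\le r<1$, and the interval $\{\sigma>1-C/\log q\}$ is contained in $\{\sigma>r\}$. We may therefore assume $\chi$ is non-principal, so that $L:=L(\chi,\cdot)$ is entire of finite order, $m_L=0$, and $L$ fits the standing hypotheses on the Dirichlet series with $\sigma_0=0$ (note $|a_n|=|\chi(n)|\le 1$). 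Take $\varphi=\chi_{(0,1)}$; it satisfies the conditions on the auxiliary function with $\sigma_0=\sigma_1=0$, and $\widehat\varphi(s)=1/s$. Since $m_L=0$, the residue term in the definition of $\psi$ vanishes, so $\psi(u)=-\sum_{n<u}\chi(n)$; being a non-principal character sum, $\psi$ is bounded, hence $\psi\in L^2((1,+\infty),du/u^{1+2r})$ for every $r>0$, and the standing integrability assumption is in force. With $\sigma_0=0$ one has $f_{A,r}(t)=-t^r\sum_j c_j\sum_{n<\alpha_j/t}\chi(n)$, and since $m_L=0$ there is no admissibility constraint; consequently $K_r$ is exactly the linear span of the functions $t\mapsto t^r\sum_{n<\alpha/t}\chi(n)$, $0<\alpha\le 1$ (the global sign being irrelevant to a span). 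Taking $\lambda=1-r\in\Pi_0$, the ambient measure is $dt/t$ and $t^{\bar\lambda-2\sigma_0}\chi_{(0,1)}=t^{1-r}\chi_{(0,1)}$, so comparing with the definition of $d_r$ we get $d_r(1-r)=\dist(t^{1-r}\chi_{(0,1)},K_r)=d_r$.

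Next I would apply Theorem~\ref{Thm:disque-sans-zeros} with $\lambda=1-r$. Set $R=\sqrt{1-2(1-r)d_r^2}$, which lies in $[0,1]$ because $d_r^2\le\frac{1}{2-2r}$. The theorem asserts that $L$ does not vanish on $r+D_r(1-r)$, where $D_r(1-r)=\{\mu:|(\mu-(1-r))/(\mu+1-r)|<R\}$. By the explicit description of such sets (their center and radius) recalled just after Theorem~\ref{Thm:disque-sans-zeros}, applied with $a=1-r$, $b=0$, $\sigma_0=0$, the set $r+D_r(1-r)$ meets the real axis exactly in the open interval with endpoints $r+(1-r)\frac{1-R}{1+R}$ and $r+(1-r)\frac{1+R}{1-R}$ (reducing to the half-plane $\{\re>r\}$ when $R=1$, consistent with these formulas in the limit). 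The hypothesis gives the strict inequality $d_r^2<\frac{1}{2-2r}$, hence $R>0$ and the right endpoint is $>1$ (or $+\infty$); since $L(\chi,\sigma)\ne 0$ for every real $\sigma>1$ by the Euler product, it suffices to prove $\ell\le 1-C/\log q$ for the left endpoint $\ell:=r+(1-r)\frac{1-R}{1+R}$.

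Finally I would rewrite the hypothesis: multiplying $d_r^2\le\frac{1}{2(1-r)}-\frac{C^2}{2(\log q)^2(1-r)^3}$ by $2(1-r)>0$ gives $2(1-r)d_r^2\le 1-\rho_0^2$ with $\rho_0:=C/((1-r)\log q)$, hence $R^2\ge\rho_0^2$, i.e. $\rho_0\le R\le 1$. The desired inequality $\ell\le 1-C/\log q$ is equivalent, after subtracting $r$ and dividing by $1-r>0$, to $\frac{1-R}{1+R}\le 1-\rho_0$, i.e. to $\frac{2R}{1+R}\ge\rho_0$. Since $x\mapsto\frac{2x}{1+x}$ is increasing on $[0,1]$ and $R\ge\rho_0$,
$$\frac{2R}{1+R}\ge\frac{2\rho_0}{1+\rho_0}=\rho_0+\rho_0\,\frac{1-\rho_0}{1+\rho_0}\ge\rho_0,$$
the last step using $\rho_0\le 1$. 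Thus $\ell\le 1-C/\log q$, and together with $L(\chi,\sigma)\ne 0$ for $\sigma>1$ this shows $L(\chi,\sigma)\ne 0$ for all real $\sigma>1-C/\log q$.

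In essence this is a translation into the concrete setting followed by one elementary inequality. The points requiring care are the bookkeeping in the reduction — the vanishing of the residue, the vacuity of $0$-admissibility, the boundedness of $\psi$ yielding the standing $L^2$-assumption, and the harmless sign in $f_{A,r}$ — together with, on the geometric side, correctly extracting the real trace of the disc $r+D_r(1-r)$ and checking that, joined with the trivial zero-free half-line $\{\sigma>1\}$, it covers $\{\sigma>1-C/\log q\}$. I do not expect a genuine obstacle; the only delicate point is treating uniformly the degenerate cases $R=0$ (excluded here since $\rho_0>0$) and $R=1$ (the half-plane case, absorbed by the same endpoint formula).
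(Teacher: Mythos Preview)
Your proof is correct and follows essentially the same route as the paper: specialize the general framework to $L(\chi,\cdot)$ with $\sigma_0=0$, $\varphi=\chi_{(0,1)}$, $\lambda=1-r$, identify $d_r=d_r(1-r)$, apply Theorem~\ref{Thm:disque-sans-zeros}, intersect the resulting disc with the real axis, and glue with the trivial zero-free half-line $\sigma>1$. The only cosmetic difference is in the final elementary step: the paper rewrites the left endpoint as $1-\frac{R-R^2}{d_r^2}$, weakens it to $1-(1-r)R$ (using $R\le 1$), and then invokes $R\ge \rho_0$; you instead reduce directly to $\frac{2R}{1+R}\ge\rho_0$ and check this via $R\ge\rho_0$ and $\frac{2\rho_0}{1+\rho_0}\ge\rho_0$, which is the same inequality unpacked differently.
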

In order to obtain the criterion for the Siegel zero problem, then we consider all Dirichlet characters $\chi$ and an absolute constant $C$ independant of $\chi$.

The next section is devoted to the proof of Theorem \ref{Thm:cns-integrabilite-psi} and to the study of the function $\psi$.  Section \ref{admissible} will focus on  the admissible sequences and the functions $f_{A,r}$.
Theorem \ref{Thm:disque-sans-zeros} and Corollary \ref{Cor2:disque-sans-zeros} will be proven in Section~\ref{Zeros_free_regions} and Theorem \ref{Thm:reformulation-HRG} will be proven in Section~\ref{sec:BN-theorems}. Some explicit examples will be studied in Section \ref{examples} in which we will prove Theorem \ref{dirichlet}.

%
%
%
%
%
%
%
%

\section{The function $\psi$ and proof of theorem \ref{Thm:cns-integrabilite-psi}}
We define the functions $\psi_{1}$ and $\psi_{2}$ by
\begin{eqnarray*}
\psi_{1}(u) &=& \res\left(L(s)\hat\varphi(s)u^s,s=1\right) \qquad (u\in\RR_+),\\
\psi_{2}(u) &=& \sum_{n <u }a_n\varphi\left(\frac nu\right) \qquad (u\in\RR_+),
\end{eqnarray*} 
so that $\psi(u)=\psi_{1}(u) - \psi_{2}(u)$ by definition.

The function $s \mapsto \hat\varphi(s)$ is analytic in $\Pi_{\sigma_{0}}$ and the meromorphic continuation of $L(s)$ has a pole (of order $m_L$) only at 
$s=1$, hence we can write
\begin{equation}\label{eq:decomposition-F-varphichapeau}
L(s)\hat\varphi(s)=\sum_{k=1}^{m_L}\frac{p_{-k}}{(s-1)^k}-H(s),
\end{equation}
with $H$ analytic in $\Pi_{\sigma_{0}}$.
\begin{Lem}\label{lemme crucial} We have 
\begin{equation}\label{eq:transformee-Mellin-fonction-H}
H(s)=\int_0^1\psi\left(\frac 1t\right)t^{s-1}\,dt  \,=\hat\phi(t), \;\;\;\;\; \re(s)>1,
\end{equation}
where $\phi(t)=\psi(1/t)\chi_{(0,1)}(t)$.
\end{Lem}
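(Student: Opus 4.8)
The plan is to compute the Mellin transform of $\phi(t) = \psi(1/t)\chi_{(0,1)}(t)$ directly from the definition $\psi = \psi_1 - \psi_2$ and to identify it with $H$ via the decomposition \eqref{eq:decomposition-F-varphichapeau}. First I would write, for $\re(s) > 1$,
$$
\widehat{\phi}(s) = \int_0^1 \psi\left(\tfrac1t\right) t^{s-1}\,dt = \int_1^{+\infty} \psi(u)\, u^{-s-1}\,du,
$$
after the substitution $u = 1/t$, and then split this as $\int_1^\infty \psi_1(u) u^{-s-1}\,du - \int_1^\infty \psi_2(u) u^{-s-1}\,du$, provided each piece converges absolutely for $\re(s)$ large. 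The second integral is the interesting one: since $\psi_2(u) = \sum_{n<u} a_n \varphi(n/u)$, a term-by-term integration (legitimate for $\re(s)>1$ by the growth bound $a_n \ll_\varepsilon n^\varepsilon$ together with the support and boundedness hypotheses on $\varphi$) gives
$$
\int_1^{+\infty} \psi_2(u) u^{-s-1}\,du = \sum_{n\geq 1} a_n \int_n^{+\infty} \varphi\left(\tfrac nu\right) u^{-s-1}\,du = \sum_{n\geq 1} \frac{a_n}{n^s} \int_0^1 \varphi(v) v^{s-1}\,dv = L(s)\widehat\varphi(s),
$$
using the substitution $v = n/u$ in the inner integral and recognizing the Dirichlet series and the Mellin transform of $\varphi$.

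For the first integral, $\psi_1(u) = \res(L(s)\widehat\varphi(s) u^s, s=1)$ is a finite linear combination of terms $u (\log u)^j$, $0 \le j \le m_L - 1$ (with coefficients read off from the Laurent coefficients $p_{-k}$), so $\int_1^\infty \psi_1(u) u^{-s-1}\,du$ converges for $\re(s) > 1$ and evaluates to an explicit rational function of $s$ with a single pole at $s=1$. The natural way to package this is to check directly that this rational function equals $\sum_{k=1}^{m_L} p_{-k}/(s-1)^k$: indeed $\int_1^\infty u(\log u)^j u^{-s-1}\,du = \int_1^\infty (\log u)^j u^{-s}\,du = j!/(s-1)^{j+1}$, and matching the combinatorics of the residue computation against these integrals gives exactly the principal part in \eqref{eq:decomposition-F-varphichapeau}. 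Combining, $\widehat\phi(s) = \sum_{k=1}^{m_L} p_{-k}/(s-1)^k - L(s)\widehat\varphi(s) = H(s)$ for $\re(s)$ large, and since both sides are analytic (the left side by absolute convergence, the right side by hypothesis) on $\re(s) > 1$, they agree there by analytic continuation.

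The main obstacle is justifying the interchange of summation and integration in the $\psi_2$ computation and controlling the convergence near the endpoints: near $u = n$ one must use $\varphi(x) = O((1-x)^{-\sigma_1})$ with $\sigma_1 < 1$ to ensure integrability, and globally one needs the estimate $a_n \ll_\varepsilon n^\varepsilon$ to sum the tails against $n^{-\re(s)}$ for $\re(s)$ sufficiently large before extending back to $\re(s) > 1$ by analyticity. One should also be slightly careful that the sum defining $\psi_2(u)$ is finite for each fixed $u$ (only $n < u$ contribute), so there is no issue of pointwise convergence, only of the Fubini step; a dominated-convergence or Tonelli argument on $\sum_n |a_n| \int_n^\infty |\varphi(n/u)| u^{-\re(s)-1}\,du$ handles this cleanly.
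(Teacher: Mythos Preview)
Your proof is correct and follows essentially the same route as the paper's: both compute $\int_1^\infty \psi_2(u)u^{-s-1}\,du = L(s)\hat\varphi(s)$ via Fubini (justified by $a_n\ll_\varepsilon n^\varepsilon$ and the integrability of $|\varphi(t)|t^{\sigma-1}$ for $\sigma>1$), evaluate $\int_1^\infty \psi_1(u)u^{-s-1}\,du$ term by term using $\int_1^\infty(\log u)^{k-1}u^{-s}\,du = (k-1)!/(s-1)^k$, and combine via \eqref{eq:decomposition-F-varphichapeau}. The only cosmetic difference is that the paper works directly on $\re(s)>1$ rather than first taking $\re(s)$ large and invoking analytic continuation, but this is immaterial.
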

\begin{proof} [{\it Proof of Lemma \ref{lemme crucial}}] On the one hand, we have
\begin{equation}\label{mellin psi 2}
L(s) \hat{\varphi}(s)= \int_{1}^{+\infty} \psi_{2}(u) u^{-s-1} du,\qquad \re(s)>1.
\end{equation}
Indeed, this equality comes from the following computation and  Fubini's theorem:
\begin{eqnarray*}
L(s)\hat{\varphi}(s) &=& \left( \sum_{n\geq 1} \frac{a_{n}}{n^{s}} \right) \int_{0}^{1} \varphi(t) t^{s-1} dt = \sum_{n\geq 1} a_n \int_0^1 \varphi(t) \left(\frac{t}{n}\right)^s \frac{dt}{t} \\
					 &=& \sum_{n\geq 1} a_n \int_n^{+\infty} \varphi\left(\frac{n}{u}\right) u^{-s-1} du \\
					 &=& \int_1^{+\infty} \sum_{n<u} a_n \varphi\left(\frac{n}{u}\right) u^{-s-1} du \\
					 &=& \int_1^{+\infty} \psi_2(u) u^{-s-1} du.
\end{eqnarray*}
Note that Fubini's theorem can be applied here because $\sigma=\re(s)>1$ and then 
\[
\sum_{n\geq 1}\frac{|a_n|}{n^{\sigma}}\int_0^1 |\varphi(t)|t^{\sigma-1}\,dt<+\infty.
\]
On the other hand, for each $1\le k\le m_L$, we have
\begin{equation}\label{residu}
\res\left(\frac{u^s}{(s-1)^k},s=1\right)=\frac{u(\log u)^{k-1}}{(k-1)!},
\end{equation}
and an easy induction argument gives
$$
\frac{1}{(k-1)!}\int_1^{+\infty}(\log u)^{k-1}u^{-s}du=\frac{1}{(s-1)^k}.
$$

Hence, with the notation of \eqref{eq:decomposition-F-varphichapeau}, the equality
\begin{equation}\label{mellin psi 1}
\int_1^{+\infty}\psi_{1}(u)u^{-s-1}\,du=\sum_{k=1}^{m_L}\frac{p_{-k}}{(s-1)^k}\,.
\end{equation}
follows by linearity.

Equations (\ref{mellin psi 2}) and (\ref{mellin psi 1}) imply
$$
H(s)=\sum_{k=1}^{m_L}\frac{p_{-k}}{(s-1)^k}-L(s)\hat\varphi(s)=\int_1^{+\infty}\psi(u)u^{-s-1}\,du=\int_0^1\psi\left(\frac 1t\right)t^{s-1}\,dt.
$$
\end{proof}
\begin{remark}\label{poly} From (\ref{eq:decomposition-F-varphichapeau}) and  (\ref{residu}), we easily see that we can write $\psi_1(t)=tP(\log t)$ where $P$ is a polynomial of degree $<m_L$ ($P\equiv 0$ if $m_L=0$). More precisely we have
\[
\psi_1(t)=t\sum_{k=1}^{m_L}\frac{p_{-k}}{(k-1)!}(\log t)^{k-1}.
\]
\end{remark}
\begin{Lem}\label{lemme:intermediaire} The function $s \mapsto H(s)$ is of finite order on $\Pi_r$. Moreover, for all $\sigma>1$, the function $t\mapsto H(\sigma + i t)$ belongs to $L^2(\RR)$ and
$$
\int_\RR |H(\sigma + it)|^2 dt = O\left(\frac{1}{\sigma^{1-2\sigma_1}} \right) \; \; \mbox{ for } \sigma \rightarrow + \infty.
$$
\end{Lem}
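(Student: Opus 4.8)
The plan is to handle the two assertions separately: finite order of $H$ on $\Pi_r$ is a soft consequence of the standing hypotheses, while the $L^2$ statement is the substantive point and reduces, via the unitarity of the Mellin transform, to a one-dimensional estimate on $\psi$.

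\textbf{Finite order.} First I would check that $\hat\varphi$ is bounded on $\Pi_r$: since $r>\sigma_0$, for $\re(s)\ge r$ and $t\in(0,1)$ we have $t^{\re(s)-1}\le t^{r-1}$, so $|\hat\varphi(s)|\le\int_0^1|\varphi(t)|t^{r-1}\,dt$, and this integral is finite because $\varphi(t)=O(t^{-\sigma_0})$ near $0$, $\varphi$ is locally bounded on $(0,1)$, and $\varphi(t)=O((1-t)^{-\sigma_1})$ with $\sigma_1<1$ near $1$. By hypothesis $s\mapsto(s-1)^{m_L}L(s)$ is analytic of finite order on $\Pi_{\sigma_0}\supseteq\Pi_r$; writing $L(s)\hat\varphi(s)=(s-1)^{-m_L}\big((s-1)^{m_L}L(s)\big)\hat\varphi(s)$, its product with the bounded analytic $\hat\varphi$ shows $L\hat\varphi$ is of finite order on $\{\,s\in\Pi_r:|s-1|\ge1\,\}$, where also $\sum_{k\le m_L}p_{-k}(s-1)^{-k}=O(|s-1|^{-1})$. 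Since $H$ is analytic (hence bounded) on the remaining bounded piece $\{\,s\in\Pi_r:|s-1|<1\,\}$ by \eqref{eq:decomposition-F-varphichapeau}, it follows that $H$ is of finite order on $\Pi_r$.

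\textbf{Reduction to a one-dimensional integral.} Fix $\sigma>1$, set $I(\sigma)=\int_1^{+\infty}|\psi(u)|^2u^{-1-2\sigma}\,du$ and $\phi(t)=\psi(1/t)\chi_{(0,1)}(t)$; the change of variables $u=1/t$ gives $\|\phi\|_{L^2_*((0,1),dt/t^{1-2\sigma})}^2=I(\sigma)$. By Lemma~\ref{lemme crucial} (and analytic continuation), once $I(\sigma)<\infty$ is known, $\phi\in L^2_*((0,1),dt/t^{1-2\sigma})$ and $\mathcal M\phi$ coincides with $\tfrac1{\sqrt{2\pi}}H$ on $\Pi_\sigma$, so the unitarity of $\mathcal M\colon L^2_*((0,1),dt/t^{1-2\sigma})\to H^2(\Pi_\sigma)$ yields
$$
\int_{\RR}|H(\sigma+it)|^2\,dt=\|H\|_{H^2(\Pi_\sigma)}^2=2\pi\|\mathcal M\phi\|_{H^2(\Pi_\sigma)}^2=2\pi I(\sigma).
$$
Thus everything reduces to proving $I(\sigma)<\infty$ for every $\sigma>1$ together with $I(\sigma)=O(\sigma^{2\sigma_1-1})$ as $\sigma\to+\infty$ (the latter also supplying the missing finiteness).

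\textbf{Estimating $I(\sigma)$.} I would split $I(\sigma)=\int_1^2+\int_2^{+\infty}$. On $(1,2)$ only the index $n=1$ occurs in $\psi_2$, so $\psi(u)=\psi_1(u)-a_1\varphi(1/u)$ with $\psi_1(u)=uP(\log u)$ bounded (Remark~\ref{poly}) and $\varphi(1/u)=O((u-1)^{-\sigma_1})$ as $u\to1^+$; hence, after $v=u-1$,
$$
\int_1^2|\psi(u)|^2u^{-1-2\sigma}\,du\ll\int_0^1\big(v^{-2\sigma_1}+1\big)(1+v)^{-1-2\sigma}\,dv,
$$
which is finite precisely because $\sigma_1<1/2$ (this is the promised clarification of that hypothesis), and, using $(1+v)^{-1-2\sigma}\le e^{-\sigma v}$ for $v\in(0,1)$ and $\int_0^{+\infty}v^{-2\sigma_1}e^{-\sigma v}\,dv=\Gamma(1-2\sigma_1)\sigma^{2\sigma_1-1}$, is $O(\sigma^{2\sigma_1-1})$; this is the dominant contribution. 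On $(2,+\infty)$ I would use the crude bound $|\psi(u)|\ll u(1+\log u)^{m_L}+u^{\varepsilon}\sum_{n<u}|\varphi(n/u)|\ll_\varepsilon u^{1+\varepsilon}+u^{\sigma_1+\varepsilon}\{u\}^{-\sigma_1}$, which follows from $a_n\ll_\varepsilon n^{\varepsilon}$, the decay of $\varphi$ near $0$ and near $1$ (using $\sigma_0,\sigma_1<1$), and the observation that among the terms $\varphi(n/u)$ only $n=\lfloor u\rfloor$ is singular as $u$ ranges over $[\lfloor u\rfloor,\lfloor u\rfloor+1)$, producing the $\{u\}^{-\sigma_1}$ factor. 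Choosing $\varepsilon<(\sigma-1)/2$ makes $\int_2^{+\infty}u^{1+2\varepsilon-2\sigma}\,du$ converge, while $\int_N^{N+1}\{u\}^{-2\sigma_1}\,du=(1-2\sigma_1)^{-1}$ (again $\sigma_1<1/2$) controls the singular part term by term; summing over $N\ge2$ gives $\int_2^{+\infty}|\psi(u)|^2u^{-1-2\sigma}\,du=O(2^{-2\sigma})$, negligible against $\sigma^{2\sigma_1-1}$. Adding the two pieces finishes the argument.

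\textbf{Main obstacle.} The soft part and the Mellin--Plancherel reduction are routine; the real work is the estimate of $I(\sigma)$. The delicate points are extracting the exact exponent $2\sigma_1-1$ from the behaviour of $\psi$ at $u=1^+$ (rather than a crude $O(1/\sigma)$), and keeping track of the singularities of $\psi_2$ at the integers — which is exactly where $\sigma_1<1/2$ is used, to make $|\psi|^2$ locally integrable and the local integrals $\int_N^{N+1}\{u\}^{-2\sigma_1}\,du$ finite and uniform in $N$. One must also take $\varepsilon$ small in terms of $\sigma$ to avoid an $\varepsilon$-loss that would spoil convergence for $\sigma$ close to $1$.
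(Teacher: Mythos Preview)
Your argument is correct, but it follows a genuinely different route from the paper's. The paper never touches $\psi$ here: it works directly with the decomposition \eqref{eq:decomposition-F-varphichapeau}, bounds the rational part $\sum_k p_{-k}(s-1)^{-k}$ on the line $\re(s)=\sigma$ by the elementary estimate $\int_\RR |(\sigma-1+it)^{-j}|^2\,dt=O(\sigma^{1-2j})$, and for the term $L(\sigma+it)\hat\varphi(\sigma+it)$ uses that $L$ is bounded on $\Pi_\sigma$ (since $\sigma>1$) together with Plancherel applied to $\varphi$ itself,
\[
\int_\RR |\hat\varphi(\sigma+it)|^2\,dt=2\pi\int_0^1|\varphi(x)|^2x^{2\sigma-1}\,dx\ll \beta(2(\sigma-\sigma_0),1-2\sigma_1)\asymp \sigma^{2\sigma_1-1},
\]
the beta--Stirling asymptotic giving the exact exponent in one line. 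Your approach instead invokes Lemma~\ref{lemme crucial} to write $H=\hat\phi$, applies Plancherel to $\phi$, and then carries out a fairly delicate direct estimate of $I(\sigma)=\int_1^\infty|\psi(u)|^2u^{-1-2\sigma}\,du$, with the splitting at $u=2$, the singular contribution at each integer, and the $\varepsilon$-management. Both are valid; the paper's is considerably shorter and avoids any analysis of $\psi$, while yours has the (minor) advantage of establishing the bound on $I(\sigma)$ directly, which is in any case what is used in Corollary~\ref{Cor:estimation-norme-psi-r-plusgrand1}. Two small remarks: your claim that the $(1,2)$-piece is $O(\sigma^{2\sigma_1-1})$ tacitly assumes $\sigma_1\ge 0$ (otherwise the ``$+1$'' contributes $O(1/\sigma)$, which is larger) --- but the paper's own proof has exactly the same feature, since the rational part contributes $O(1/\sigma)$; and for the asymptotic as $\sigma\to\infty$ you should fix $\varepsilon$ once (say for $\sigma\ge 2$) rather than let it depend on $\sigma$, reserving the $\sigma$-dependent choice only for the finiteness statement at each $\sigma>1$.
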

\begin{proof} [{\it Proof of Lemma \ref{lemme:intermediaire}}]
The function $s\mapsto H(s)$ is bounded on some neighborhood $V_1$ of $s=1$. The functions $s\mapsto \sum_j\frac{p_{-j}}{(s-1)^j}$
and (by assumption) $s \mapsto L(s)$ are of finite order on $\Pi_{\sigma_0}\smallsetminus V_1$. Moreover, $s\mapsto \hat{\varphi}(s)$ is bounded 
on the closure of $\Pi_r$ since
$$
|\hat{\varphi}(s)|\le \int_0^1|\varphi(x)|x^{r}\frac{dx}{x},\qquad \re(s)\geq r,
$$
and the last integral is convergent because $r>\sigma_0$ and $\varphi(x)=O(x^{-\sigma_0} (1-x)^{-\sigma_1})$. Hence we can conclude that $H$ is of finite order on $\Pi_r$, which proves the first part of the lemma.\smallskip
\\
Let $\sigma>1$ be a fixed real number. We easily check that, for every $1\leq j\leq m_L$, the function $t\longmapsto (\sigma-1+it)^{-j}$ belongs to $L^2(\RR)$ and we have
\begin{eqnarray}\label{eq:estimation-triviale-fraction-rationnelle}
\int_\RR\left|\frac{1}{(\sigma+it-1)^j} \right|^2\,dt=O\left(\frac{1}{\sigma^{2j-1}}\right),\qquad \sigma\to+\infty.
\end{eqnarray}
On the other hand, using Plancherel's formula and the fact that $\sigma_1<1/2$ and $\sigma>1>\sigma_0$, we have 
\begin{eqnarray}\label{eq:estimation-triviale2}
\int_{\RR}|\hat{\varphi}(\sigma+it)|^2dt=2\pi \int_{0}^1|\varphi(x)|^2 x^{2\sigma}\frac{dx}{x}<+\infty.
\end{eqnarray}
More precisely 
\begin{eqnarray}\label{eq:estimation-triviale3}
\int_{0}^1|\varphi(x)|^2 x^{2\sigma}\frac{dx}{x}=O\left(\frac{1}{\sigma^{1-2\sigma_1}}\right),\qquad \sigma\to+\infty.
\end{eqnarray}
Indeed, we have $\varphi(x) = O(x^{-\sigma_0} (1-x)^{-\sigma_1})$ and
$$
\int_0^1 |x^{-\sigma_0} (1-x)^{-\sigma_1}|^2 x^{2\sigma}  \frac{dx}{x} = \beta(2(\sigma-\sigma_0) ,1-2\sigma_1)
$$
where $\beta(s,z)$ is the beta function. Using Stirling's formula, we have 
\[
\beta(2(\sigma-\sigma_0),1-2\sigma_1)\sim \frac{\Gamma(1-2\sigma_1)}{(2\sigma)^{1-2\sigma_1}},\qquad \sigma\to +\infty.
\]
So we get the estimate~\eqref{eq:estimation-triviale3}. 

Therefore $t\longmapsto \hat\varphi(\sigma+it)$ belongs to $L^2(\RR)$ and we have
\begin{equation}\label{estimate}
\int_{\RR}|\hat{\varphi}(\sigma+it)|^2dt=O\left(\frac{1}{\sigma^{1-2\sigma_1}}\right),\qquad \sigma\to +\infty. 
\end{equation}
It remains to notice that $s\mapsto L(s)$ is bounded in $\Pi_{\sigma}$ (since by hypothesis $\sigma>1$) and then the second part of the lemma follows immediately from \eqref{eq:estimation-triviale-fraction-rationnelle}, \eqref{estimate} and \eqref{eq:decomposition-F-varphichapeau}.
\end{proof}

\noindent
Now we can  prove Theorem \ref{Thm:cns-integrabilite-psi}.\medskip 
~\\
Assume that $\mathrm{(i)}$ is satisfied. Then  the function $\phi(t)=\psi\left(\frac 1t\right)\chi_{(0,1)}(t)$  belongs to $L^2_*((0,1),\frac{dt}{t^{1-2r}})$ and thus the function 
$G:=\hat\phi$ belongs to $H^2(\Pi_{r})$. The analytic continuation principle implies that the equality \eqref{eq:transformee-Mellin-fonction-H} is satisfied for every $s\in\Pi_{r}$, that is $H(s)=\hat\phi(s)=G(s)$, $s\in\Pi_{r}$. Since $G\in H^2(\Pi_{r})$, we know that the function $G^*$, defined by
\[
G^*(t):=\lim_{\sigma \stackrel[>]{}{\to} r} G(\sigma+it),
\]
exists almost everywhere on $\RR$ and belongs to $L^2(\RR)$. But 
\[
G^*(t)=\lim_{\sigma \stackrel[>]{}{\to} r}H(\sigma+it)=H(r+it),
\]
because $H$ is continuous on the closed half-plane $\re(s)\geq r$. Therefore $t\longmapsto H(r+it)$ belongs to $L^2(\RR)$. It remains to notice that for every $1\leq k\leq m_L$, the function $t\longmapsto (r-1+it)^{-k}$ belongs to $L^2(\RR)$ (this is where we have to assume that $r\neq 1$ whenever $m_L\geq 1$). Thus according to \eqref{eq:decomposition-F-varphichapeau}, we get that 
$t\mapsto L(r+it)\hat\varphi(r+it)$ belongs to $L^2(\RR)$.\\
\\
Conversely assume that $\mathrm{(ii)}$ is satisfied. Then the function $t\longmapsto H(r+it)$ belongs to $L^2(\RR)$. 

Let $\sigma_2 > \max(1,r)$. The function $H$ is analytic on $\Omega:=\{s:\thinspace r<\re(s)<\sigma_2\}$ and continuous on the closure of $\Omega$. 
Using the Hardy, Littlewood and Ingham's theorem \cite[Theorem 7]{HIP}, we deduce from Lemma~\ref{lemme:intermediaire} that
$$
\sup_{r\le \sigma\le \sigma_2}\int_{\RR}|H(\sigma+it)|^2dt<+\infty.
$$
Hence, the second part of Lemma~\ref{lemme:intermediaire} gives that $H$ belongs to $H^{2}(\Pi_{r})$. Since we have $H^2(\Pi_{r})=\mathcal M\big(L^2_*\left((0,1),\frac{dt}{t^{1-2r}}\right)\big)$, there exists $\phi_1\in L^2_*\left((0,1),\frac{dt}{t^{1-2r}}\right)$ such that $H(s)=\hat\phi_1(s)$, for every $s\in \Pi_{r}$. Thus for $\re(s)>\max(1,r)$, we have $\hat\phi_1(s)=H(s)=\hat \phi(s)$.

By injectivity of the Mellin transform, we get that 
\[
\psi\left(\frac 1t\right)\chi_{(0,1)}(t)=\phi(t)=\phi_1(t).
\]
Thus $t\longmapsto \psi\left(\frac 1t\right)\chi_{(0,1)}(t)$ belongs to $L^2_*\left((0,1),\frac{dt}{t^{1-2r}}\right)$, which implies that $\psi$ belongs to $L^2\left((1,+\infty),\frac{du}{u^{1+2r}}\right)$ and that concludes the proof of Theorem \ref{Thm:cns-integrabilite-psi}.

\hfill $\square$

\begin{Cor}\label{Cor:estimation-norme-psi-r-plusgrand1} Let $r>1$. Then $\psi \in L^2\left((1,+\infty), \frac{du}{u^{1+2r}}\right)$ and we have
$$
\Vert \psi \Vert_2 = O\left(r^{\sigma_1-1/2}\right), \mbox{ as } r \rightarrow +\infty.
$$
\end{Cor}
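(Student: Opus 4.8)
The plan is to reduce the statement, via Theorem~\ref{Thm:cns-integrabilite-psi} and the change of variables $u\mapsto 1/t$, to the $L^2$-norm of $H$ on the vertical line $\re(s)=r$, for which Lemma~\ref{lemme:intermediaire} already provides exactly the required decay.

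\emph{Step 1 (membership).} First I would apply Theorem~\ref{Thm:cns-integrabilite-psi}: since $r>1$ we have $r>\sigma_0$ and $r\neq 1$, so the theorem is applicable and it suffices to check condition~$\mathrm{(ii)}$, namely that $t\mapsto L(r+it)\hat\varphi(r+it)$ lies in $L^2(\RR)$. On the line $\re(s)=r>1$ the Dirichlet series defining $L$ converges absolutely, hence $L$ is bounded there; and by~\eqref{eq:estimation-triviale2} (equivalently~\eqref{estimate}) the function $t\mapsto\hat\varphi(r+it)$ belongs to $L^2(\RR)$. A product of a bounded function and an $L^2$ function is in $L^2$, so $\mathrm{(ii)}$ holds and therefore $\psi\in L^2\bigl((1,+\infty),\tfrac{du}{u^{1+2r}}\bigr)$.

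\emph{Step 2 (rewriting the norm).} The change of variables $u=1/t$ gives
\[
\|\psi\|_2^2=\int_1^{+\infty}|\psi(u)|^2\,\frac{du}{u^{1+2r}}=\int_0^1\Bigl|\psi\Bigl(\tfrac1t\Bigr)\Bigr|^2 t^{2r-1}\,dt=\|\phi\|^2,
\]
where $\phi(t)=\psi(1/t)\chi_{(0,1)}(t)$ and the last norm is that of $L^2_*\bigl((0,1),\tfrac{dt}{t^{1-2r}}\bigr)$. By Step~1, $\phi$ lies in this space, so $\widehat\phi\in H^2(\Pi_r)$; moreover $\widehat\phi$ and $H$ are analytic on $\Pi_r$ and agree on $\Pi_1$ by Lemma~\ref{lemme crucial}, so by analytic continuation $\widehat\phi=H$ throughout $\Pi_r$ (this is precisely the identification carried out in the ``$\mathrm{(ii)}\Rightarrow\mathrm{(i)}$'' part of the proof of Theorem~\ref{Thm:cns-integrabilite-psi}). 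Since $\mathcal M$ is unitary and $\mathcal M\phi=\frac{1}{\sqrt{2\pi}}\widehat\phi=\frac{1}{\sqrt{2\pi}}H$, we obtain
\[
\|\psi\|_2^2=\|\phi\|^2=\frac{1}{2\pi}\,\|H\|_{H^2(\Pi_r)}^2=\frac{1}{2\pi}\sup_{\sigma>r}\int_\RR|H(\sigma+it)|^2\,dt.
\]

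\emph{Step 3 (conclusion).} Finally I would invoke Lemma~\ref{lemme:intermediaire}: there exist $C>0$ and $\sigma^\ast>1$ with $\int_\RR|H(\sigma+it)|^2\,dt\le C\,\sigma^{2\sigma_1-1}$ for all $\sigma>\sigma^\ast$. As $\sigma_1<1/2$, the exponent $2\sigma_1-1$ is negative and, $H$ being an $H^2$ function of a half-plane, $\sigma\mapsto\int_\RR|H(\sigma+it)|^2\,dt$ is non-increasing; hence, for $r>\sigma^\ast$,
\[
\sup_{\sigma>r}\int_\RR|H(\sigma+it)|^2\,dt\le C\,r^{2\sigma_1-1}.
\]
Combined with Step~2 this yields $\|\psi\|_2^2=O\!\left(r^{2\sigma_1-1}\right)$, that is $\|\psi\|_2=O\!\left(r^{\sigma_1-1/2}\right)$ as $r\to+\infty$.

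\emph{Main difficulty.} There is no substantial obstacle; the two points needing a little care are (a) the identification $\widehat\phi=H$ on all of $\Pi_r$ rather than merely on $\Pi_1$, handled by analytic continuation exactly as in the proof of Theorem~\ref{Thm:cns-integrabilite-psi}, and (b) transferring the asymptotics of Lemma~\ref{lemme:intermediaire}, stated along individual vertical lines, to the supremum defining the $H^2(\Pi_r)$-norm, which is legitimate because that supremum is the monotone limit as $\sigma\to r^+$.
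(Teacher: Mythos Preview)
Your proof is correct and essentially identical to the paper's: both verify condition~$\mathrm{(ii)}$ of Theorem~\ref{Thm:cns-integrabilite-psi}, identify $\widehat\phi$ with $H$ via Lemma~\ref{lemme crucial} and analytic continuation, and then invoke Lemma~\ref{lemme:intermediaire}. The only cosmetic difference is that the paper uses Plancherel on the boundary line directly to obtain $\|\psi\|_2^2=\frac{1}{2\pi}\int_\RR|H(r+it)|^2\,dt$, bypassing your detour through the supremum definition of the $H^2$-norm and the monotonicity argument in Step~3.
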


\begin{proof}
Let $r>1$ be a fixed real number.  By Lemma~\ref{lemme:intermediaire}, the function $t\longmapsto H(r+it)$ belongs to $L^2(\RR)$ and for every $1\leq j\leq m_L$, the function $t\longmapsto (r-1+it)^{-j}$ belongs also to $L^2(\RR)$. Therefore it follows from \eqref{eq:decomposition-F-varphichapeau} that $t\longmapsto L(r+it)\hat\varphi(r+it)$ belongs to $L^2(\RR)$ and Theorem~\ref{Thm:cns-integrabilite-psi}  implies that $\psi\in L^2((1,+\infty),\frac{du}{u^{1+2r}})$. Moreover, if we let as before $\phi(t)=\psi(1/t)\chi_{(0,1)}(t)$, we have by Plancherel's formula
\begin{eqnarray*}
\|\psi\|_2^2&=&\int_1^{+\infty}|\psi(u)|^2\frac{du}{u^{1+2r}}=\int_0^1\left|\psi\left(\frac{1}{t}\right)\right|^2\,\frac{dt}{t^{1-2r}}\\
&=&\|\phi\|^2_{L^2_*((0,1),\frac{dt}{t^{1-2r}})}\\
&=&\frac{1}{2\pi}\int_\RR |\hat\phi(r+it)|^2\,dt\\
&=&\frac{1}{2\pi}\int_\RR |H(r+it)|^2\,dt,
\end{eqnarray*}
and Lemma~\ref{lemme:intermediaire} gives the result. 
\end{proof}

\begin{remark}
In fact,  if $L(s)$ has some nice arithmetical properties (e.g. satisfying  a Wiener-Ikehara type theorem), one can expect that the main contributions of $\psi_1$ will be compensated by the main contributions of $\psi_2$ and so that the function $\psi$ will belong to the space $L^2\left((1,+\infty), \frac{du}{u^{1+2r}}\right)$ for smaller value of $r$. For example, if $L(s)=\zeta(s)$  and if $\varphi(t)=\chi_{0,1}(t)$ for all $t\in (0,1)$, then $\psi(u)=u - \lceil u \rceil +1 $ (so $\psi(u)=\{u \}$ for almost all $u\in (1,\infty)$) and $\psi \in L^2\left((1,+\infty), \frac{du}{u^{1+2r}}\right)$ for all $r>0$. 
In this case $\hat{\varphi}(s)=1/s$ and $t \mapsto |\zeta(r+it)|/|r+it| \in L^2(\RR)$ for all $r>0$ as expected by Theorem \ref{Thm:cns-integrabilite-psi}.
We will discuss about other examples in Section~\ref{examples}.
\end{remark}

\begin{remark}
Assume that $\varphi$ is bounded at $t=1$ (which corresponds to $\sigma_1=0$). Thus, for every $\alpha>1$, we have $\psi(t)=\psi_1(t)-\psi_2(t)=O(t^\alpha)$, $t\to+\infty$. Indeed, on the one hand, as we have seen $\psi_1(t)=tP(\log t)$, where $P$ is a polynomial of degree $<m_L$. On the other hand, let $\alpha>1$ be a fixed real number. Then there exists a constant $C>0$ such that 
\[
|\varphi(t)|\leq C t^{-\alpha},\qquad t\in (0,1).
\] 
Hence we get
\begin{eqnarray*}
|\psi_2(t)|&\leq&\sum_{n<t}|a_n|\left|\varphi\left(\frac{n}{t}\right)\right| \leq C \sum_{n<t}|a_n|\left(\frac{t}{n}\right)^{\alpha}\\
&\leq& C't^{\alpha}\sum_{n<t}\frac{1}{n^{\alpha-\varepsilon}},
\end{eqnarray*}
which gives that $\psi_2(t)=O(t^{\alpha})$  as $t\to+\infty$. It is now easy to check that $\psi$ belongs to  $L^2((1,+\infty),\frac{du}{u^{1+2r}})$, for every $r>1$ and $\|\psi\|_2=O(r^{-1/2})$, $r\to +\infty$. We recover Corollary~\ref{Cor:estimation-norme-psi-r-plusgrand1}.
\end{remark}

%
%
%
%
%
%
%
%

\section{Admissible sequences and the functions $f_{A,r}$}\label{admissible}

The following results will be useful in the sequel. 
\begin{Lem}\label{lemma_admissible}
Let $(\alpha,c)$ be an $m$-admissible sequence.
\begin{enumerate}
\item For any polynomial $P$ of degree $d<m$, we have
$$
\sum_{j=1}^{\ell(\alpha)} c_j \alpha_j P(\log \alpha_j) =0.
$$
\item For any positive real number $\lambda_1$ and for any real number $\lambda_2\geq \max_j(\alpha_j)$, both $((\alpha_j^{\lambda_1})_j,(c_j\alpha_j^{1-\lambda_1})_j)$ and $((\alpha_j/\lambda_2)_j,c)$ are $m$-admissible sequences.
\end{enumerate}
\end{Lem}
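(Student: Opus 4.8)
The plan is to handle the two parts in order, using part~(1) as the engine for part~(2).

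For part~(1), I would expand $P$ in the monomial basis. Writing $P(x)=\sum_{k=0}^{d}a_k x^k$ with $d\le m-1$, linearity gives
\[
\sum_{j=1}^{\ell(\alpha)} c_j\alpha_j P(\log\alpha_j)=\sum_{k=0}^{d}a_k\sum_{j=1}^{\ell(\alpha)}c_j\alpha_j(\log\alpha_j)^k,
\]
and each inner sum vanishes by the defining relation~\eqref{eq:admissibilite-general}, since $0\le k\le d\le m-1$. That is the whole argument.

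For part~(2), I would first check that the new sequences still belong to $W$, i.e.\ that their first components lie in $(0,1]$: for $((\alpha_j^{\lambda_1})_j,(c_j\alpha_j^{1-\lambda_1})_j)$ this is because $\alpha_j\in(0,1]$ and $\lambda_1>0$ force $\alpha_j^{\lambda_1}\in(0,1]$; for $((\alpha_j/\lambda_2)_j,c)$ it is because $\lambda_2\ge\max_j\alpha_j>0$ forces $0<\alpha_j/\lambda_2\le 1$. Then I would verify the admissibility relations~\eqref{eq:admissibilite-general} for each. For the first sequence, using $\alpha_j^{1-\lambda_1}\alpha_j^{\lambda_1}=\alpha_j$ and $\log(\alpha_j^{\lambda_1})=\lambda_1\log\alpha_j$, the relation for index $k$ becomes
\[
\sum_{j=1}^{\ell(\alpha)}(c_j\alpha_j^{1-\lambda_1})\,\alpha_j^{\lambda_1}\,\bigl(\log\alpha_j^{\lambda_1}\bigr)^k=\lambda_1^k\sum_{j=1}^{\ell(\alpha)}c_j\alpha_j(\log\alpha_j)^k=0
\]
for $0\le k\le m-1$. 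For the second sequence, I would write $\log(\alpha_j/\lambda_2)=\log\alpha_j-\log\lambda_2$, so that $\bigl(\log(\alpha_j/\lambda_2)\bigr)^k=Q(\log\alpha_j)$ with $Q(x)=(x-\log\lambda_2)^k$ a polynomial of degree $k<m$; hence
\[
\sum_{j=1}^{\ell(\alpha)}c_j\,\frac{\alpha_j}{\lambda_2}\,\bigl(\log(\alpha_j/\lambda_2)\bigr)^k=\frac{1}{\lambda_2}\sum_{j=1}^{\ell(\alpha)}c_j\alpha_j Q(\log\alpha_j)=0
\]
by part~(1).

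There is essentially no obstacle here; the only points demanding (minor) care are deducing the constraint $\alpha_j/\lambda_2\le 1$ from the hypothesis $\lambda_2\ge\max_j\alpha_j$, and observing that part~(1) is precisely what renders the translation $\log\alpha_j\mapsto\log\alpha_j-\log\lambda_2$ harmless. One could instead expand $(x-\log\lambda_2)^k$ via the binomial theorem and apply~\eqref{eq:admissibilite-general} termwise, but routing through part~(1) keeps the bookkeeping minimal.
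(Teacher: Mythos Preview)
Your argument is correct and is precisely the natural one; in fact the paper omits the proof entirely, stating only that it ``is easy and follows immediately from the definition,'' so your write-up supplies exactly the expected verification.
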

The proof is easy and follows immediately from the definition and so we omit it.
\noindent
For an $m$-admissible sequence $A=(\alpha,c)$ we define the entire function
\[
g_A(s)=\sum_{j=1}^{\ell(\alpha)}c_j\alpha_j^{s}\qquad (s\in \CC).
\]
Using \eqref{eq:admissibilite-general}, we notice that these functions satisfy
\begin{equation}\label{derivees}
g_A^{(k)}(1)=0\qquad (0\le  k \le m-1).
\end{equation}
\begin{Lem}\label{Lem:cle-zeros-communs} Let $m$ be a nonnegative integer. Then  we have the followings
\begin{enumerate}
\item For every integer $\ell\geq m+1$ and every $0<\alpha_1<\alpha_2 <\cdots <\alpha_\ell \leq 1$, there exists $c\in\CC^{\ell}$ such that $A=(\alpha,c)$ is an $m$-admissible sequence and $g_A^{(m)}(1)\neq 0$. Furthermore we can choose $c_\ell \neq 0$. 
\item For every $s_1\in \CC\smallsetminus\{1\}$, there exists an $m$-admissible sequence $A$ such that $g_A(s_1)\neq 0$.
\end{enumerate}
\end{Lem}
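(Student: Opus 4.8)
The plan is to treat both parts by the same device: view the conditions defining an $m$-admissible sequence together with the extra non-vanishing requirement as a finite system of linear equations on the coefficient vector $c\in\CC^\ell$, and check that the system is underdetermined in the right way so that a solution with the desired non-vanishing exists. Throughout, for fixed $0<\alpha_1<\cdots<\alpha_\ell\le 1$ I would work with the $(\ell-1)$ quantities $\alpha_j(\log\alpha_j)^k$, $1\le j\le\ell-1$, regarded as $m$ linear functionals $c\mapsto \sum_{j=1}^\ell c_j\alpha_j(\log\alpha_j)^k$ ($0\le k\le m-1$) on $\CC^\ell$; by \eqref{eq:admissibilite-general} these vanishing conditions are exactly $m$-admissibility, and $g_A^{(m)}(1)=\sum_j c_j\alpha_j(\log\alpha_j)^m$, $g_A(s_1)=\sum_j c_j\alpha_j^{s_1}$ are two more linear functionals.

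For part (1): the space of $c\in\CC^\ell$ satisfying the $m$ admissibility conditions has dimension $\ge\ell-m\ge 1$. I would show that on this space the functional $c\mapsto g_A^{(m)}(1)$ is not identically zero, because otherwise the Vandermonde-type matrix with rows $(\alpha_j(\log\alpha_j)^k)_{j=1}^\ell$, $0\le k\le m$, would have rank $\le m$, contradicting the fact that the numbers $\log\alpha_j$ are pairwise distinct (so the generalized Vandermonde determinant in the $\log\alpha_j$, after factoring out the nonzero $\alpha_j$, is nonzero). Hence there exists an $m$-admissible $A$ with $g_A^{(m)}(1)\neq 0$. To arrange in addition $c_\ell\neq 0$, I would argue that among such $A$ one can pick one with $c_\ell\neq 0$: if every solution of the combined system ``$m$ admissibility equations $+$ ($c_\ell=0$)'' still forced $g_A^{(m)}(1)=0$, that again would force a rank drop incompatible with the distinctness of the $\log\alpha_j$ restricted to the first $\ell-1$ indices (using $\ell-1\ge m$). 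Alternatively, and perhaps more cleanly, I can first solve the problem with $\ell$ replaced by $\ell-1\ge m+1$ when $\ell-1\ge m+1$, obtaining $c_1,\dots,c_{\ell-1}$ not all zero and $\sum_{j<\ell}c_j\alpha_j(\log\alpha_j)^m\neq 0$, and then append $c_\ell$ chosen so the full vector stays admissible — but this needs care, so I expect the rank argument above to be the safer route; in the boundary case $\ell=m+1$ one solves directly and checks $c_\ell$ can be taken nonzero from the explicit Vandermonde solution.

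For part (2): fix $s_1\in\CC\setminus\{1\}$. Choose any $\ell\ge m+1$ and any $0<\alpha_1<\cdots<\alpha_\ell\le 1$. The admissibility conditions cut out a subspace $V\subseteq\CC^\ell$ of dimension $\ge\ell-m\ge 1$, and I want some $c\in V$ with $\sum_j c_j\alpha_j^{s_1}\neq 0$, i.e.\ I want the linear functional $c\mapsto g_A(s_1)$ to be non-trivial on $V$. If it were identically zero on $V$, then $(\alpha_j^{s_1})_j$ would lie in the span of the admissibility rows $(\alpha_j(\log\alpha_j)^k)_j$, $0\le k\le m-1$; equivalently the $(m+1)\times\ell$ matrix with these rows plus the row $(\alpha_j^{s_1})_j$ would have rank $\le m$. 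I would rule this out by a suitable choice of the $\alpha_j$: e.g.\ it suffices to exhibit $m+1$ indices among $1,\dots,\ell$ on which the corresponding $(m+1)\times(m+1)$ minor is nonzero, and since the function $\alpha\mapsto(\alpha,\alpha\log\alpha,\dots,\alpha(\log\alpha)^{m-1},\alpha^{s_1})$ from $(0,1]$ to $\CC^{m+1}$ has components spanning a Chebyshev-type system (the exponentials/powers $1,\log\alpha,\dots,(\log\alpha)^{m-1}$ and $\alpha^{s_1-1}$ are linearly independent functions of $\alpha$ since $s_1\neq 1$), one can pick $\alpha_1<\cdots<\alpha_\ell$ making that minor nonzero. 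I would note that the distinctness hypothesis on the $\alpha_j$ in part (1) and the freedom to choose them here in part (2) are what make these rank arguments go through, and indeed the crux of the whole lemma — the one genuinely non-formal step — is precisely this linear independence / non-degeneracy of the generalized Vandermonde systems built from $\{\alpha(\log\alpha)^k\}$ (and $\alpha^{s_1}$), which follows from the distinctness of the $\log\alpha_j$ together with the condition $s_1\neq 1$.
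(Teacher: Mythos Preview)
Your approach to part (1) is essentially the paper's: both reduce to the nonvanishing of the $(m+1)\times(m+1)$ Vandermonde in the distinct values $\log\alpha_j$ (the paper does it constructively by solving the square system on the indices $1,\dots,m,\ell$). However, your argument for arranging $c_\ell\neq 0$ contains a logical slip. You write that if one cannot achieve $c_\ell\neq 0$, then ``every solution of [admissibility $+$ $c_\ell=0$] forces $g_A^{(m)}(1)=0$''; but the correct negation is the \emph{opposite} implication, namely that every admissible $c$ with $c_\ell\neq 0$ has $g_A^{(m)}(1)=0$. What you actually need is that the functional $c\mapsto c_\ell$ is not identically zero on the admissible subspace $V$ (equivalently $e_\ell$ is not in the row span of the $m\times\ell$ admissibility matrix, which follows since a polynomial of degree $\le m-1$ cannot vanish at the $\ell-1\ge m$ distinct points $\log\alpha_1,\dots,\log\alpha_{\ell-1}$); then $V$ cannot be the union of the two proper hyperplanes $\{g_A^{(m)}(1)=0\}$ and $\{c_\ell=0\}$. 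The paper sidesteps this entirely by its explicit construction, reading $c_\ell\neq 0$ off the inverse of the sub-Vandermonde.

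For part (2) your route is genuinely different from the paper's and, while correct, is heavier. You fix $\alpha$ and argue that for a suitable (generic) choice of the $\alpha_j$ the $(m+1)\times(m+1)$ generalized Vandermonde with rows $((\log\alpha_j)^k)_{k<m}$ and $(\alpha_j^{\,s_1-1})$ is nonsingular, using the linear independence of $1,t,\dots,t^{m-1},e^{(s_1-1)t}$ for $s_1\neq 1$. The paper instead takes \emph{any} non-trivial $m$-admissible $A$, observes via Lemma~\ref{lemma_admissible} that the rescaled sequence $A_\lambda=((\alpha_j^\lambda)_j,(c_j\alpha_j^{1-\lambda})_j)$ is again $m$-admissible for $0<\lambda<1$, and notes that $g_{A_\lambda}(s_1)=g_A\big(1+\lambda(s_1-1)\big)$; since $g_A$ is analytic and not identically zero, some $\lambda$ gives $g_{A_\lambda}(s_1)\neq 0$. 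This one-line analytic argument avoids any determinant computation and any special choice of the~$\alpha_j$.
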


\begin{proof}
$(1)$  Let $\ell \geq m+1$, then there is a unit vector $(b_1,b_2,\dots,b_m,b_\ell)\in\CC^{m+1}$ such that 
\begin{eqnarray}\label{eq:equation-vandermonde}
\left(
\begin{array}{ccccc}
1&\dots&\dots& 1&1\\
(\log\alpha_1)&\dots&\dots&(\log\alpha_{m})&(\log\alpha_\ell)\\
\vdots&       &     & \vdots &\vdots \\
(\log\alpha_1)^m&\dots&\dots&(\log\alpha_{m})^m&(\log\alpha_\ell)^m\\
\end{array}
\right)
\left(
\begin{array}{c}
b_1\\
\vdots\\
b_{m}\\
b_\ell\\
\end{array}
\right)
=
\left(
\begin{array}{c}
0\\
\vdots\\
0\\
1\\
\end{array}
\right),
\end{eqnarray}
since the corresponding van der Monde matrix is invertible.
Setting 
\[
c_j=\begin{cases}
\frac{b_j}{\alpha_j},&1\leq j\leq m\\
0,&m+1\leq j\leq \ell-1\\
\frac{b_\ell}{\alpha_\ell},&j=\ell.
\end{cases}
\]
Then $A=(\alpha,c)$ is an $m$-admissible sequence and $g_A^{(m)}(1)=1$. Furthermore we easily see that $b_\ell\neq0$. Indeed if $b_\ell=0$ then we have 
\[
\left(
\begin{array}{cccc}
1&\dots&\dots& 1\\
(\log\alpha_1)&\dots&\dots&(\log\alpha_{m})\\
\vdots&       &     & \vdots \\
(\log\alpha_1)^{m-1}&\dots&\dots&(\log\alpha_{m})^{m-1}\\
\end{array}
\right)
\left(
\begin{array}{c}
b_1\\
\vdots\\
\vdots\\
b_{m}\\
\end{array}
\right)
=
\left(
\begin{array}{c}
0\\
\vdots\\
\vdots\\
0\\
\end{array}
\right),
\]
and using once more that the corresponding van der Monde matrix is invertible, we get $b_1=b_2=\dots=b_m=b_\ell=0$, which contradicts \eqref{eq:equation-vandermonde}. Therefore $b_\ell\not=0$ and then~$c_\ell\not=0$. 
\medskip
\\ 
(2) Let $A=(\alpha,c)$ be a non trivial $m$-admissible sequence. For $0<\lambda<1$, consider $g_{A_\lambda}(s)=g_A(1-\lambda+\lambda s)$. Note that
$$
g_{A_\lambda}(s)=\sum_{j=1}^{\ell}c_j\alpha_j^{1-\lambda}(\alpha_j^{\lambda})^{s},
$$ 
and, by Lemma~\ref{lemma_admissible}, $A_\lambda=((\alpha_j^{\lambda})_j,(c_j\alpha_j^{1-\lambda})_j)$ is an $m$-admissible sequence. Now let $s_1\neq 1$; then there necessarily exists $0<\lambda<1$ such that  $g_{A_\lambda} (s_1)\neq 0$, since $g_A$ is analytic and non identically zero.
\end{proof}
\begin{Thm}\label{Thm:cle-transforme-mellin}
Let $r>\sigma_0$ such that $\psi \in L^2((1,+\infty),\frac{du}{u^{1+2r}})$ and let $A=(\alpha,c)$ be an $m_L$-admissible sequence.
We define
$$
f_{A,r}(t)= t^{r-\sigma_0} \sum_{j=1}^{\ell(\alpha)} c_j \psi\left(\frac{\alpha_j}{t}\right) \qquad (t>0).
$$
Then we have:
\begin{enumerate}
\item $f_{A,r}(t)=0$, if $t> \max_j \alpha_j$.
\item $f_{A,r}(t)\in L^2_*((0,1),\frac{dt}{t^{1-2\sigma_0}})$.
\item For $\re(s)>\sigma_0$, we have
\begin{eqnarray}\label{eq:transformee-mellin-fcalphagamma}
\widehat{f_{A,r}}(s)=-L(s+r-\sigma_0)\hat\varphi(s+r-\sigma_0)g_{A}(s+r-\sigma_0).
\end{eqnarray}
\end{enumerate}
\end{Thm}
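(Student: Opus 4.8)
The plan is to prove the three items in order, after one preliminary observation: the polynomial part $\psi_{1}$ of $\psi$ contributes nothing to $f_{A,r}$. By Remark~\ref{poly} write $\psi_{1}(u)=u\,P(\log u)$ with $P$ of degree $<m_{L}$; then
$$
\sum_{j}c_{j}\,\psi_{1}\!\left(\tfrac{\alpha_{j}}{t}\right)=\frac1t\sum_{j}c_{j}\alpha_{j}\,P(\log\alpha_{j}-\log t),
$$
and, expanding $P(\,\cdot-\log t)$ in powers of $\log\alpha_{j}$ and applying Lemma~\ref{lemma_admissible}(1) to the $m_{L}$-admissible sequence $A$, every coefficient of the sum vanishes. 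Hence $\sum_{j}c_{j}\psi_{1}(\alpha_{j}/t)\equiv0$, so that
$$
f_{A,r}(t)=-\,t^{\,r-\sigma_{0}}\sum_{j}c_{j}\,\psi_{2}\!\left(\tfrac{\alpha_{j}}{t}\right).
$$

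Item (1) is then immediate: $\psi_{2}(v)=\sum_{n<v}a_{n}\varphi(n/v)$ is an empty sum when $v\le1$, so for $t>\max_{j}\alpha_{j}$ every $\psi_{2}(\alpha_{j}/t)$ vanishes and $f_{A,r}(t)=0$. For item (2), since $\max_{j}\alpha_{j}\le1$, item (1) shows $f_{A,r}$ is supported in $(0,1]$, so it only remains to check $\int_{0}^{1}t^{2r-1}\big|\sum_{j}c_{j}\psi(\alpha_{j}/t)\big|^{2}\,dt<\infty$. By the Cauchy--Schwarz inequality this is bounded by a finite sum of integrals $\int t^{2r-1}|\psi(\alpha_{j}/t)|^{2}\,dt$, and the substitution $v=\alpha_{j}/t$ turns each into $\alpha_{j}^{2r}\int_{\alpha_{j}/M}^{\infty}|\psi(v)|^{2}\,v^{-1-2r}\,dv$ with $M=\max_{k}\alpha_{k}$ and $0<\alpha_{j}/M\le1$. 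The tail $\int_{1}^{\infty}$ is finite by the standing hypothesis $\psi\in L^{2}((1,\infty),du/u^{1+2r})$, while on $(\alpha_{j}/M,1)$ one has $\psi=\psi_{1}$, which is bounded there, so $\int_{\alpha_{j}/M}^{1}$ is finite as well; thus $f_{A,r}\in L^{2}_{*}((0,1),dt/t^{1-2\sigma_{0}})$.

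For item (3), item (2) already guarantees that $\widehat{f_{A,r}}=\sqrt{2\pi}\,\mathcal M(f_{A,r})$ is analytic on $\Pi_{\sigma_{0}}$. To identify it, I would first take $s$ with $\re(s)>1-r+\sigma_{0}$ and set $w=s+r-\sigma_{0}$, so $\re(w)>1$. Because $\int_{1}^{\infty}|\psi_{2}(v)|\,v^{-\re(w)-1}\,dv<\infty$ — which is exactly the absolute convergence checked inside the proof of Lemma~\ref{lemme crucial} — one may pull the finite sum out of the Mellin integral; the change of variable $v=\alpha_{j}/t$ together with \eqref{mellin psi 2} then give $\int_{0}^{\infty}t^{w-1}\psi_{2}(\alpha_{j}/t)\,dt=\alpha_{j}^{w}L(w)\hat\varphi(w)$, hence $\widehat{f_{A,r}}(s)=-L(w)\hat\varphi(w)g_{A}(w)$ on $\re(w)>1$. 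Finally the right-hand side extends analytically to all of $\Pi_{\sigma_{0}}$ in $s$: $\hat\varphi$ is analytic there, $g_{A}$ is entire, and although $L$ has a pole of order $m_{L}$ at $w=1$, by \eqref{derivees} $g_{A}$ vanishes to order $\ge m_{L}$ there, so $L\,g_{A}$ is analytic across $w=1$. Both sides of \eqref{eq:transformee-mellin-fcalphagamma} being analytic on $\Pi_{\sigma_{0}}$ and agreeing on the nonempty sub-half-plane $\{\re(s)>\max(1-r+\sigma_{0},\sigma_{0})\}$, the identity theorem concludes.

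The hard part, as I see it, is not a single estimate but the careful bookkeeping: in (2) one must notice that after the substitution the near-origin contribution only involves the polynomially growing $\psi_{1}$ (harmless), the genuine difficulty lying in the tail, which is precisely what the hypothesis on $\psi$ was built to control; and in (3) one must be scrupulous about justifying the interchange of the finite sum with the Mellin integral — leaning on the absolute-convergence bound already implicit in Lemma~\ref{lemme crucial} — and about the pole/zero cancellation at $w=1$ that legitimizes the analytic continuation onto $\Pi_{\sigma_{0}}$.
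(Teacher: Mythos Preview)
Your proof is correct and follows essentially the same approach as the paper: both kill the $\psi_{1}$-contribution via admissibility (you do it once up front, the paper does it inside each item), bound the $L^{2}$-norm termwise after the substitution $v=\alpha_{j}/t$, and establish \eqref{eq:transformee-mellin-fcalphagamma} first for $\re(s+r-\sigma_{0})>1$ using \eqref{mellin psi 2} and then by analytic continuation with the pole/zero cancellation at $w=1$. The only cosmetic remark is that in item (2) what you call ``Cauchy--Schwarz'' is really the triangle (Minkowski) inequality in $L^{2}$, and your lower limit $\alpha_{j}/M$ tacitly uses that $f_{A,r}$ vanishes on $(M,1]$ so the integral over $(0,1)$ equals the integral over $(0,M)$ before you split into individual terms.
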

\begin{proof} 
(1) As before, we write $\psi = \psi_1 - \psi_2$. From the remark \ref{poly}, we have $\psi_1(u)=uP(\log u)$ where $P$ is a polynomial of degree $<m_L$. Then
\begin{eqnarray*}
 \sum_{j=1}^{\ell(\alpha)} c_j \psi\left(\frac{\alpha_j}{t}\right) &=&  \sum_{j=1}^{\ell(\alpha)} c_j \psi_1\left(\frac{\alpha_j}{t}\right) -  \sum_{j=1}^{\ell(\alpha)} c_j \psi_2\left(\frac{\alpha_j}{t}\right) \\
 &=& \sum_{j=1}^{\ell(\alpha)} c_j \frac{\alpha_j}{t} P\left(\log(\frac{\alpha_j}{t})\right) - \sum_{j=1}^{\ell(\alpha)} c_j \sum_{n <\alpha_j/t} a_n \varphi\left( \frac{nt}{\alpha_j}\right).
\end{eqnarray*}
If $t>\max_j (\alpha_j)$ the second sum is 0 because $\alpha_j/t<1$. Furthermore, by Lemma \ref{lemma_admissible}, $(\alpha/t,c)$ is an $m_L$-admissible sequence and the first sum is also zero.\medskip 
\\
(2) Each of the terms  $t^{r-\sigma_0} \psi\left(\frac{\alpha_j}{t}\right)$ belongs to the space $L^2((0,1),\frac{dt}{t^{1-2\sigma_0}})$. Indeed, if $\Vert \alpha \Vert :=\max_j \alpha_j$, we have
\begin{eqnarray*}
\int_0^{\Vert \alpha \Vert}  t^{2r-2\sigma_0}\left|\psi\left(\frac{\alpha_j}{t}\right)\right|^2 \frac{dt}{t^{1-2\sigma_0}} &=& \int_0^{\Vert \alpha \Vert} t^{2r-1} \left| \psi\left(\frac{\alpha_j}{t}\right)\right|^2 dt \\
&=& \alpha_j^{2r} \int_{\frac{\alpha_j}{\Vert \alpha \Vert}}^{+\infty} |\psi(u)|^2 \frac{du}{u^{1+2r}} \\
&=& \alpha_j^{2r} \int_{\frac{\alpha_j}{\Vert \alpha \Vert}}^1 |\psi(u)|^2 \frac{du}{u^{1+2r}} + \alpha_j^{2r} \int_1^{+\infty} |\psi(u)|^2 \frac{du}{u^{1+2r}}\\
&=&\alpha_j^{2r} \int_{\frac{\alpha_j}{\Vert \alpha \Vert}}^1 |\psi_1(u)|^2 \frac{du}{u^{1+2r}} + \alpha_j^{2r} \int_1^{+\infty} |\psi(u)|^2 \frac{du}{u^{1+2r}},
\end{eqnarray*}
since $\psi(u)=\psi_1(u)$ for $u<1$. Now, by hypothesis, the second integral is finite and the first integral is also finite because $\psi_1(u)=uP(\log u)$ and $\frac{\alpha_j}{\Vert \alpha \Vert}>0$.
Furthermore, we see that
\begin{equation}\label{remarque_norme}
\Vert f_{A,r} \Vert_{L^2_*((0,1),\frac{dt}{t^{1-2\sigma_0}})} \leq \sum_{j=1}^{\ell(\alpha)} |c_j\alpha_j^r|\left(\left( \int_{\frac{\min_j \alpha_j}{\max_j \alpha_j}}^1 \left|\psi_1(u)\right|^2 \frac{du}{u^{1+2r}}\right)^{1/2} + \Vert \psi \Vert_{L^2((1,+\infty),\frac{du}{u^{1+2r}})}\right).
\end{equation}
(3) For $\re(s)>1$, we have by (\ref{mellin psi 2})
$$
L(s) \hat{\varphi}(s) = \int_1^{+\infty} \psi_2(u) u^{-s-1} du.
$$
Hence
\begin{eqnarray*}
L(s) \hat{\varphi}(s) \sum_{j=1}^{\ell(\alpha)} c_j \alpha_j^s &=& \sum_{j=1}^{\ell(\alpha)} c_j \int_1^{+\infty} \left(\frac{u}{\alpha_j}\right)^{-s} \psi_2(u) \frac{du}{u} \\
&=& \sum_{j=1}^{\ell(\alpha)} c_j \int_0^{\alpha_j} \psi_2\left(\frac{\alpha_j}{t}\right) \frac{dt}{t^{1-s}} \\
&=& \sum_{j=1}^{\ell(\alpha)} c_j \int_0^1 \psi_2\left(\frac{\alpha_j}{t}\right) \frac{dt}{t^{1-s}},
\end{eqnarray*}
the last equality follows from $\psi_2(\alpha_j/t)=0$ if $\alpha_j < t \leq 1$. 

Using once more Lemma~\ref{lemma_admissible}, we have 
\[
\sum_{j=1}^{\ell(\alpha)} c_j \psi\left(\frac{\alpha_j}{t}\right)=-\sum_{j=1}^{\ell(\alpha)} c_j \psi_2\left(\frac{\alpha_j}{t}\right), 
\]
whence
\begin{eqnarray*}
L(s) \hat{\varphi}(s) \sum_{j=1}^{\ell(\alpha)} c_j \alpha_j^s &=&-\int_0^1 \left(\sum_{j=1}^{\ell(\alpha)} c_j \psi\left(\frac{\alpha_j}{t}\right)\right)\,\frac{dt}{t^{1-s}}\\
&=&-\int_0^1 t^{\sigma_0-r}f_{A,r}(t)\frac{dt}{t^{1-s}}\\
&=&-\int_0^1 f_{A,r}(t)t^{s+\sigma_0-r-1}\,dt\\
&=&-\widehat f_{A,r}(s+\sigma_0-r),\qquad \re(s)>1.
\end{eqnarray*}
So, for $\re(s) >1+\sigma_0-r$, we have $\widehat{f_{A,r}}(s)=-L(s+r-\sigma_0)\hat\varphi(s+r-\sigma_0)g_{A}(s+r-\sigma_0)$. By the analytic continuation principle, the equality holds for $\re(s)>\sigma_0$. (Note that from \eqref{derivees} the pole of $L(s+r-\sigma_0)$  is killed by the zero of $g_A(s+r-\sigma_0)$ at $s=1-r+\sigma_0$.)
\end{proof}

%
%
%
%
%
%
%
%

\section{Zeros free regions for Dirichlet series}\label{Zeros_free_regions}

Before proving Theorem \ref{Thm:disque-sans-zeros} we need some well known tools concerning the Hardy space $H^2(\Pi_{\sigma_0})$. For these we refer to \cite[Chapter 8]{Hoffman}. Actually, in  \cite{Hoffman}, the following facts are stated for the space $H^2(\Pi_0)$ but it is easy to obtain the corresponding results for $H^2(\Pi_{\sigma_0})$, for instance using the unitary map $h(z)\longmapsto h(z-\sigma_0)$ from $H^2(\Pi_{\sigma_0})$ onto $H^2(\Pi_0)$.

Recall that if $h\in H^2(\Pi_{\sigma_0})$, then
\[
h^*(\sigma_0+it):=\lim_{\substack{\sigma\to\sigma_0\\
>}}h(\sigma+it)
\]
exists for almost every $t\in\RR$ (with respect to the Lebesgue measure). Moreover we have $h^*\in L^2(\sigma_0+i\RR)$ and $\|h\|_2=\|h^*\|_2$.  We can therefore identify (unitarely) $H^2(\Pi_{\sigma_0})$ with a (closed) subspace of $L^2(\sigma_0+i\RR)$. In the following we use the symbol $h$  not only for the function in $H^2(\Pi_{\sigma_0})$ but also for its "radial " limit (in other words we forget the star). This identification enables us to consider $H^2(\Pi_{\sigma_0})$ as an Hilbert space, with scalar product given by
 \begin{eqnarray}\label{eq:definition-produit-scalaire-H2}
\langle h,g\rangle_2=\int_{-\infty}^{+\infty}h(\sigma_0+it)\overline{g(\sigma_0+it)}\,dt,\qquad h,g\in H^2(\Pi_{\sigma_0}).
\end{eqnarray}
Now for $\lambda\in\Pi_{\sigma_0}$, we have the following integral representation
\[
h(\lambda)=\frac{1}{2\pi}\int_{-\infty}^{+\infty}\frac{h(\sigma_0+it)}{\lambda-\sigma_0-it}\,dt,
\]
so that we can write, using \eqref{eq:definition-produit-scalaire-H2}, 
\begin{eqnarray}\label{eq1:noyau-reproduisant-H2}
h(\lambda)=\langle h,k_\lambda\rangle_2,
\end{eqnarray}
where $k_\lambda$ is the function in $H^2(\Pi_{\sigma_0})$ defined by
\begin{eqnarray}\label{eq2:noyau-reproduisant-H2}
k_\lambda(z):=\frac{1}{2\pi}\frac{1}{z-2\sigma_0+\bar\lambda},\qquad z\in\Pi_{\sigma_0}.
\end{eqnarray}
The function $k_\lambda$ is called the reproducing kernel of $H^2(\Pi_{\sigma_0})$ and we have
\begin{eqnarray}\label{eq3:noyau-reproduisant-H2}
\|k_\lambda\|_2^2=\langle k_\lambda,k_\lambda\rangle_2=k_\lambda(\lambda)=\frac{1}{4\pi(\re(\lambda)-\sigma_0)}.
\end{eqnarray}

Recall now a  useful property of factorization for functions in the Hardy space.
\begin{Lem}\label{Lem:elementaire-cle-espace-Hardy}
Let $h\in H^2(\Pi_{\sigma_0})$ and $\mu\in\Pi_{\sigma_0}$ such that $h(\mu)=0$. Then there exists $g\in H^2(\Pi_{\sigma_0})$ such that $\|h\|_2=\|g\|_2$ and
\[
h(z)=\frac{z-\mu}{z+\bar\mu-2\sigma_0}g(z),\qquad z\in\Pi_{\sigma_0}.
\] 
\end{Lem}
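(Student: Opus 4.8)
The plan is to prove the Blaschke-type factorization lemma for $H^2(\Pi_{\sigma_0})$ by reducing to the standard disc result via the unitary identifications already set up in the excerpt, or alternatively by a direct argument in the half-plane. First I would observe that the map $b_\mu(z):=\frac{z-\mu}{z+\bar\mu-2\sigma_0}$ is an analytic self-map of $\Pi_{\sigma_0}$ sending $\mu$ to $0$, and that on the boundary line $\{\re z=\sigma_0\}$ it is unimodular: for $z=\sigma_0+it$ one has $|z-\mu|=|z+\bar\mu-2\sigma_0|$ since $z-\mu = (\sigma_0-\re\mu)+i(t-\im\mu)$ and $z+\bar\mu-2\sigma_0 = (\re\mu-\sigma_0)+i(t-\im\mu)$, which are complex conjugates of each other up to sign, hence have equal modulus. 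So $|b_\mu|=1$ a.e. on $\sigma_0+i\RR$.

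Next I would define $g(z):=h(z)/b_\mu(z)$ on $\Pi_{\sigma_0}\setminus\{-\bar\mu+2\sigma_0\}$; note $-\bar\mu+2\sigma_0\notin\Pi_{\sigma_0}$, so $g$ is analytic on $\Pi_{\sigma_0}$ except possibly at $\mu$, where the hypothesis $h(\mu)=0$ together with analyticity of $h$ shows the singularity is removable (the simple zero of $b_\mu$ at $\mu$ is cancelled). Thus $g$ is analytic on all of $\Pi_{\sigma_0}$ and satisfies $h(z)=b_\mu(z)g(z)$ pointwise. The only remaining point is to show $g\in H^2(\Pi_{\sigma_0})$ with $\|g\|_2=\|h\|_2$. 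The main obstacle is controlling the $H^2$ norm of $g$: one cannot simply write $\|g\|_2 = \|h/b_\mu\|_{L^2(\sigma_0+i\RR)}$ and invoke $|b_\mu|=1$, because a priori $g$ might fail to be in the Hardy space even if its boundary quotient is $L^2$ — the subtlety is that $g$ must have the correct growth/integrability to lie in $H^2$, not merely have square-integrable boundary values.

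I would resolve this by transporting everything to the disc. Using the unitary correspondence between $H^2(\Pi_{\sigma_0})$ and $H^2(\DDD)$ (via $H^2(\Pi_{\sigma_0})\cong H^2(\Pi_0)$ by the translation $h\mapsto h(\cdot-\sigma_0)$ mentioned in the excerpt, composed with the standard conformal identification $H^2(\Pi_0)\cong H^2(\DDD)$), the function $h$ corresponds to some $\tilde h\in H^2(\DDD)$ and $\mu$ to a point $w\in\DDD$ with $\tilde h(w)=0$; the factor $b_\mu$ corresponds (up to a unimodular constant) to the disc Blaschke factor $\frac{z-w}{1-\bar w z}$. The classical result on the disc (e.g.\ \cite[Chapter 8]{Hoffman} or standard Hardy space theory) then gives $\tilde h = \frac{z-w}{1-\bar wz}\tilde g$ with $\tilde g\in H^2(\DDD)$ and $\|\tilde g\|_{H^2(\DDD)}=\|\tilde h\|_{H^2(\DDD)}$. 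Pulling $\tilde g$ back through the unitary maps produces $g\in H^2(\Pi_{\sigma_0})$ with $\|g\|_2=\|h\|_2$, and by uniqueness of analytic continuation this $g$ agrees with the quotient $h/b_\mu$ constructed above. This completes the proof; the bulk of the work is bookkeeping with the conformal identifications, while the genuine content is the classical disc factorization, which I would simply cite.
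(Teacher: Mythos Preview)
Your proof is correct and follows essentially the same approach as the paper: both reduce to the classical Blaschke factorization in a standard Hardy space via the unitary identifications already set up (the paper simply cites \cite[Chapter~8, p.~132]{Hoffman} and applies the translation $h(z)\mapsto h(z-\sigma_0)$ to pass from $\Pi_{\sigma_0}$ to $\Pi_0$, while you go one step further to the disc). Your write-up is more detailed---in particular you spell out the unimodularity of $b_\mu$ on the boundary and flag the subtlety about $H^2$ membership of the quotient---but the substance is the same.
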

\begin{proof}
See the results in \cite[Chapter 8, pp. 132]{Hoffman} and apply the transform $h(z)\longmapsto h(z-\sigma_0)$.

\end{proof}

\noindent
{\bf Proof of  Theorem \ref{Thm:disque-sans-zeros}} Denote by $E_r=\mathcal M K_r$. Since the Mellin transform is a unitary map from $L^2_*((0,1),\frac{dt}{t^{1-2\sigma_0}})$ onto $H^2(\Pi_{\sigma_0})$,  we have 
\[
E_r=\hbox{span }_{H^2(\Pi_{\sigma_0})}\left(h_{A,r} \; : \; A \hbox{ a $m_L$-admissible sequence}\right),
\]
where $h_{A,r}(s)=\mathcal Mf_{A,r}(s)$, for $\re(s)>\sigma_0$. It follows from Theorem~\ref{Thm:cle-transforme-mellin} that 
$$
h_{A,r}(s)=-\frac{1}{\sqrt{2\pi}}\,L(s+r-\sigma_0)\hat\varphi(s+r-\sigma_0)g_{A}(s+r-\sigma_0), \;\; \re(s)>\sigma_0.
$$
Now assume that there is $\mu\in \Pi_{\sigma_0}$ such that $L(\mu+r-\sigma_0)=0$. Since $\mu\in\Pi_{\sigma_0}$, we get that $h_{A,r}(\mu)=0$, for every $m_L$-admissible sequence $A$. Thus, for all $h\in E_r$ with $\|h\|_2=1$, we have $h(\mu)=0$. Since $h\in H^2(\Pi_{\sigma_0})$, we know from Lemma~\ref{Lem:elementaire-cle-espace-Hardy} that there is $g\in H^2(\Pi_{\sigma_0})$ such that $\|g\|_2=\|h\|_2=1$ and
\[
h(z)=\frac{z-\mu}{z+\bar\mu-2\sigma_0}g(z) \qquad (z\in\Pi_{\sigma_0}).
\]
Hence with Cauchy-Schwarz inequality and \eqref{eq1:noyau-reproduisant-H2}, we deduce that
\[
|h(\lambda)|=\left|\frac{\lambda-\mu}{\lambda+\bar\mu-2\sigma_0}\right||g(\lambda)|\leq \left|\frac{\lambda-\mu}{\lambda+\bar\mu-2\sigma_0}\right| \|g\|_2 \|k_\lambda\|_2 = \left|\frac{\lambda-\mu}{\lambda+\bar\mu-2\sigma_0}\right| \|k_\lambda\|_2,
\]
so
\[
\sup_{\substack{h\in E_r\\ \|h\|_2=1}}|h(\lambda)|\leq\left|\frac{\mu-\lambda}{\mu+\bar\lambda-2\sigma_0}\right| \|k_\lambda\|_2.
\]
By contraposition, we have proved that $L$ does not vanish on
$$
r-\sigma_0 +\left\{\mu\in\CC:\left|\frac{\mu-\lambda}{\mu+\bar\lambda-2\sigma_0}\right|<\frac{\sup_{\substack{f\in E_r\\ \|h\|_2=1}}|h(\lambda)|}{\|k_\lambda\|_2} \right\}.
$$
It remains to prove that
$$
\frac{\sup_{{\substack{h\in E_r\\ \|h\|_2=1}}}|h(\lambda)|^2}{\|k_\lambda\|_2^2} = 1-2(\re(\lambda)-\sigma_0)d^2_r(\lambda).
$$
To show this equality, first remark that
\[
\sup_{\substack{h\in E_r\\ \|h\|_2=1}}|h(\lambda)|=\sup_{\substack{h\in E_r \\ \|h\|_2=1}}|\langle h,k_\lambda\rangle_2|=\|P_{E_r}k_\lambda\|_2,
\]
where $P_{E}$ denotes the orthogonal projection onto the closed subspace $E$ of $H^2(\Pi_{\sigma_0})$. Then Pythagoras' Theorem implies that
\[
\sup_{\substack{h\in E_r\\
\|h\|_2=1}}|h(\lambda)|^2=\|P_{E_r}k_\lambda\|^2_2=\|k_\lambda\|_2^2-\|P_{E_r^\perp}k_\lambda\|_2^2.
\]
Hence
$$
\frac{\sup_{{\substack{h\in E_r\\ \|h\|_2=1}}}|h(\lambda)|^2}{\|k_\lambda\|_2^2} = 1-\frac{\hbox{dist}^2(k_\lambda,E_r)}{\|k_\lambda\|_2^2}.
$$
But for $\re(s)>\sigma_0$, we have 
\[
\mathcal M\left(t^{\bar\lambda-2\sigma_0}\chi_{(0,1)}(t)\right)(s)=\frac{1}{\sqrt{2\pi}}\int_0^1 t^{\bar\lambda-2\sigma_0}t^{s-1}\,dt=\frac{1}{\sqrt{2\pi}}\frac{1}{\bar\lambda-2\sigma_0+s}=\sqrt{2\pi}k_\lambda(s),
\]
according to \eqref{eq2:noyau-reproduisant-H2}. Since the Mellin transform is an isometry from $L^2_*((0,1),\frac{dt}{t^{1-2\sigma_0}})$ onto $H^2(\Pi_{\sigma_0})$, we obtain
\[
\hbox{dist}(k_\lambda,E_r)=\frac{1}{\sqrt{2\pi}}\hbox{dist}(t^{\bar\lambda -2\sigma_0}\chi_{(0,1)}(t)  ,K_r)=\frac{d_r(\lambda)}{\sqrt{2\pi}}.
\]
We conclude the proof of Theorem~\ref{Thm:disque-sans-zeros} using $\|k_\lambda\|_2^2 = \frac{1}{4\pi(\re(\lambda) - \sigma_0)}$.

\hfill $\square$
\\
\noindent
{\bf Proof of Corollary \ref{Cor2:disque-sans-zeros}} 
It follows from the proof of Theorem~\ref{Thm:disque-sans-zeros} and  \eqref{eq3:noyau-reproduisant-H2} that $L$ does not vanish on
\begin{eqnarray*}
r-\sigma_0+\left\{\mu\in\CC:\left|\frac{\mu-\lambda}{\mu+\bar\lambda-2\sigma_0}\right|<\sqrt{4\pi(\re(\lambda)-\sigma_0)}\sup_{\substack{h\in E_r \\
\|h\|_2=1}}|h(\lambda)|\right\}.
\end{eqnarray*}
So in particular (with $h=h_{A,r}$), we get that $L$ does not vanish on 
\[
r-\sigma_0+\left\{\mu\in\CC:\left|\frac{\mu-\lambda}{\mu+\bar\lambda-2\sigma_0}\right|<\sqrt{4\pi(\re(\lambda)-\sigma_0)}\frac{|h_{A,r}(\lambda)|}{\|h_{A,r}\|_2}\right\},
\]
which proves corollary \ref{Cor2:disque-sans-zeros} because $h_{A,r}={\mathcal M}f_{A,r} = \frac{1}{\sqrt{2\pi}}\widehat{f_{A,r}}$.\hfill $\square$

\bigskip
The following could be interesting in applications.
\begin{Cor}\label{Cor1:disque-sans-zeros}
Let $M$ be a subspace of $K_r$ and $\lambda \in \Pi_{\sigma_0}$. Then $L$ does not vanish on the disc
\[
r-\sigma_0+\left\{\mu\in\CC:\left|\frac{\mu-\lambda}{\mu+\bar\lambda-2\sigma_0}\right|^2<1-2(\re(\lambda)-\sigma_0) \dist^2(t^{\bar\lambda-2\sigma_0}\chi_{(0,1)},M)\right\}.
\]
\end{Cor}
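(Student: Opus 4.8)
The plan is to rerun the proof of Theorem~\ref{Thm:disque-sans-zeros} essentially verbatim, with the space $E_r$ replaced throughout by $N := \mathcal M(\clos M)$. Since $\mathcal M$ is unitary and $M$ is a subspace of the (closed) space $K_r$, the space $N$ is a closed subspace of $E_r = \mathcal M K_r$. Suppose there is $\mu \in \Pi_{\sigma_0}$ with $L(\mu+r-\sigma_0)=0$. The proof of Theorem~\ref{Thm:disque-sans-zeros} already establishes that then $h(\mu)=0$ for every $h \in E_r$; a fortiori $h(\mu)=0$ for every $h \in N$.

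Next, for each $h \in N$ with $\|h\|_2 = 1$, Lemma~\ref{Lem:elementaire-cle-espace-Hardy} gives $g \in H^2(\Pi_{\sigma_0})$ with $\|g\|_2 = 1$ and $h(z) = \frac{z-\mu}{z+\bar\mu-2\sigma_0}g(z)$, so Cauchy--Schwarz together with \eqref{eq1:noyau-reproduisant-H2} yields $|h(\lambda)| \le |\frac{\mu-\lambda}{\mu+\bar\lambda-2\sigma_0}|\,\|k_\lambda\|_2$, and hence $\sup_{h\in N,\,\|h\|_2=1}|h(\lambda)| \le |\frac{\mu-\lambda}{\mu+\bar\lambda-2\sigma_0}|\,\|k_\lambda\|_2$. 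By contraposition, $L$ does not vanish on $r-\sigma_0 + \{\mu : |\frac{\mu-\lambda}{\mu+\bar\lambda-2\sigma_0}| < \sup_{h\in N,\,\|h\|_2=1}|h(\lambda)|/\|k_\lambda\|_2\}$.

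Finally I would identify the ratio on the right. As in the proof of Theorem~\ref{Thm:disque-sans-zeros}, $\sup_{h\in N,\,\|h\|_2=1}|h(\lambda)| = \|P_N k_\lambda\|_2$, so Pythagoras' theorem gives $\sup_{h\in N,\,\|h\|_2=1}|h(\lambda)|^2/\|k_\lambda\|_2^2 = 1 - \dist^2(k_\lambda,N)/\|k_\lambda\|_2^2$; and since $\mathcal M$ is an isometry with $\mathcal M(t^{\bar\lambda-2\sigma_0}\chi_{(0,1)}) = \sqrt{2\pi}\,k_\lambda$, one has $\dist(k_\lambda,N) = \frac{1}{\sqrt{2\pi}}\dist(t^{\bar\lambda-2\sigma_0}\chi_{(0,1)}, \clos M) = \frac{1}{\sqrt{2\pi}}\dist(t^{\bar\lambda-2\sigma_0}\chi_{(0,1)}, M)$. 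Inserting $\|k_\lambda\|_2^2 = \frac{1}{4\pi(\re(\lambda)-\sigma_0)}$ from \eqref{eq3:noyau-reproduisant-H2} gives exactly the claimed disc. Since every step is a transcription of an argument already carried out for $E_r$, I do not anticipate any genuine obstacle; the only point meriting a word of care is the passage to closures, which is harmless because $\dist(\cdot,M)=\dist(\cdot,\clos M)$ and $\mathcal M(\clos M)=\clos(\mathcal M M)$.
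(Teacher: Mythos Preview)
Your argument is correct, but it is considerably more work than necessary. The paper proves the corollary in one line: since $M\subset K_r$, one has
\[
\dist(t^{\bar\lambda-2\sigma_0}\chi_{(0,1)},M)\ge \dist(t^{\bar\lambda-2\sigma_0}\chi_{(0,1)},K_r)=d_r(\lambda),
\]
so the disc in the corollary is contained in the disc $D_r(\lambda)$ already shown to be zero-free in Theorem~\ref{Thm:disque-sans-zeros}. In other words, the corollary follows from the \emph{statement} of Theorem~\ref{Thm:disque-sans-zeros} alone, via monotonicity of the radius in the distance; there is no need to reopen the proof and rerun the Hardy-space machinery with $E_r$ replaced by $N$. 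Your route does have the minor conceptual advantage of making it explicit that nothing about $E_r$ beyond ``closed subspace on which all functions vanish at $\mu$'' was used, but the paper's approach is the natural one for a result labelled as a corollary.
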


\begin{proof}
It is sufficient to note that $\hbox{dist}(t^{\bar\lambda-2\sigma_0}\chi_{(0,1)},M)\geq\hbox{dist}(t^{\bar\lambda-2\sigma_0}\chi_{(0,1)},K_r)=d_r(\lambda)$ and then apply Theorem~\ref{Thm:disque-sans-zeros}.
\end{proof}
Adapting the proof of Theorem \ref{Thm:disque-sans-zeros}, we could obtain immediately the following generalization.
\begin{Thm}  Let $k\in \NN$ and $\lambda\in\Pi_{\sigma_0}$.  The function $L$ does not have any zero of order greater or equal to $k$ on 
\[
r-\sigma_0 + \left\{\mu\in\CC:\left|\frac{\mu-\lambda}{\mu+\bar\lambda-2\sigma_0}\right|< \left(1-2(\re(\lambda)-\sigma_0)d^2_r(\lambda)\right)^{\frac{1}{2k}}\right\}.
\]
\end{Thm}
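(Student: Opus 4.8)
The plan is to adapt the proof of Theorem~\ref{Thm:disque-sans-zeros} verbatim, replacing the single Blaschke-type factor $\frac{z-\mu}{z+\bar\mu-2\sigma_0}$ by its $k$-th power. If $1-2(\re(\lambda)-\sigma_0)d_r^2(\lambda)=0$ the disc in the statement is empty and there is nothing to prove, so we may assume this quantity is positive; since $d_r^2(\lambda)\le 1/(2\re(\lambda)-2\sigma_0)$ it then lies in $]0,1]$, and its $2k$-th root makes sense.

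First I would suppose that $L$ has a zero of order at least $k$ at some point $\mu+r-\sigma_0$ with $\mu\in\Pi_{\sigma_0}$, and deduce that every $h\in E_r=\mathcal M K_r$ vanishes at $\mu$ to order at least $k$. Indeed, from the identity
\[
h_{A,r}(s)=-\frac{1}{\sqrt{2\pi}}\,L(s+r-\sigma_0)\hat\varphi(s+r-\sigma_0)g_A(s+r-\sigma_0),\qquad \re(s)>\sigma_0,
\]
established in the proof of Theorem~\ref{Thm:disque-sans-zeros}, and since $g_A$ is entire and $\hat\varphi$ is analytic on $\Pi_{\sigma_0}$ (and $\re(\mu+r-\sigma_0)>\sigma_0$), the function $h_{A,r}$ has a zero of order at least $k$ at $\mu$ for every $m_L$-admissible sequence $A$. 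This property passes to finite linear combinations, and then to $H^2(\Pi_{\sigma_0})$-limits as well, because convergence in $H^2(\Pi_{\sigma_0})$ implies uniform convergence on compact subsets of $\Pi_{\sigma_0}$, hence convergence of all derivatives; thus every $h\in E_r$ vanishes at $\mu$ to order at least $k$.

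Next I would iterate Lemma~\ref{Lem:elementaire-cle-espace-Hardy}. Given $h\in E_r$ with $\|h\|_2=1$, one application produces $g_1\in H^2(\Pi_{\sigma_0})$ with $\|g_1\|_2=1$ and $h(z)=\frac{z-\mu}{z+\bar\mu-2\sigma_0}g_1(z)$; as the Blaschke-type factor vanishes at $\mu$ to order exactly one, $g_1$ still vanishes there to order at least $k-1$, so the lemma applies again, and after $k$ steps we obtain $g\in H^2(\Pi_{\sigma_0})$ with $\|g\|_2=1$ and
\[
h(z)=\left(\frac{z-\mu}{z+\bar\mu-2\sigma_0}\right)^{k}g(z),\qquad z\in\Pi_{\sigma_0}.
\]
Evaluating at $\lambda$ and using $|g(\lambda)|=|\langle g,k_\lambda\rangle_2|\le\|k_\lambda\|_2$ (Cauchy--Schwarz and~\eqref{eq1:noyau-reproduisant-H2}) gives $|h(\lambda)|\le\left|\frac{\mu-\lambda}{\mu+\bar\lambda-2\sigma_0}\right|^{k}\|k_\lambda\|_2$, hence
\[
\sup_{\substack{h\in E_r\\ \|h\|_2=1}}|h(\lambda)|\le\left|\frac{\mu-\lambda}{\mu+\bar\lambda-2\sigma_0}\right|^{k}\|k_\lambda\|_2 .
\]

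Finally I would invoke the identity $\sup_{h\in E_r,\,\|h\|_2=1}|h(\lambda)|^2=\|k_\lambda\|_2^2\bigl(1-2(\re(\lambda)-\sigma_0)d_r^2(\lambda)\bigr)$ obtained inside the proof of Theorem~\ref{Thm:disque-sans-zeros}; combined with the previous bound it yields
\[
\left|\frac{\mu-\lambda}{\mu+\bar\lambda-2\sigma_0}\right|^{2k}\ge 1-2(\re(\lambda)-\sigma_0)d_r^2(\lambda),
\]
i.e. $\left|\frac{\mu-\lambda}{\mu+\bar\lambda-2\sigma_0}\right|\ge\bigl(1-2(\re(\lambda)-\sigma_0)d_r^2(\lambda)\bigr)^{1/(2k)}$. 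Taking the contrapositive shows that $L$ has no zero of order $\ge k$ at $\mu+r-\sigma_0$ once $\mu$ lies in the stated disc. I expect the only point needing care to be the first step: one must check that multiplying $L$ by the holomorphic factors $\hat\varphi$ and $g_A$ cannot lower the order of the zero at $\mu$ (it cannot), and that taking the closed span in $H^2(\Pi_{\sigma_0})$ preserves vanishing to order $k$ (which follows from local uniform convergence). Everything else is a word-for-word repetition of the case $k=1$.
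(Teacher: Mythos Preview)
Your proposal is correct and is precisely the adaptation the paper has in mind: the paper does not give a detailed argument for this theorem, only the sentence ``Adapting the proof of Theorem~\ref{Thm:disque-sans-zeros}, we could obtain immediately the following generalization.'' Your write-up---factoring out the Blaschke-type factor $k$ times via Lemma~\ref{Lem:elementaire-cle-espace-Hardy}, together with the observation that a zero of order $\ge k$ for $L$ at $\mu+r-\sigma_0$ forces every $h\in E_r$ to vanish to order $\ge k$ at $\mu$---is exactly that adaptation, and the care you take with the closure step (via local uniform convergence in $H^2$) is appropriate.
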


%
%
%
%
%
%
%
%

\section{A Beurling--Nyman type theorem for Dirichlet series}\label{sec:BN-theorems}
One of the main steps for proving Theorem \ref{Thm:reformulation-HRG} is to prove that the space $E_r$ is a closed subspace of $H^2(\Pi_{\sigma_0})$ that is invariant under multiplication operator $\tau_v$, $v\geq 0$, and then to apply Lax--Beurling's theorem. Before recalling this theorem, we give some notations and results. We refer to \cite[Part A, Chap. 2 \& 6]{nikolski2} for more details. 

For $v \in \RR$, let $\tau_v$ be the operator of multiplication on $L^2(\sigma_0+i\RR)$ defined by
\[
(\tau_v f)(\sigma_0+it)=e^{-ivt}f(\sigma_0+it) \qquad (f\in L^2(\sigma_0+i\RR)).
\]
We also denote by $H^\infty(\Pi_{\sigma_0})$ the Hardy space of bounded analytic functions on $\Pi_{\sigma_0}$; as in $H^2(\Pi_{\sigma_0})$, functions in $H^\infty(\Pi_{\sigma_0})$ admit "radial" limits at almost every points of the boundary $\re(s)=\sigma_0$ and if $f\in H^\infty(\Pi_{\sigma_0})$ and $f^*$ is its boundary limits, then  $f^*\in L^\infty(\sigma_0+i\RR)$ and $\|f^*\|_\infty=\|f\|_\infty$ ($=\sup_{z\in\Pi_{\sigma_0}}|f(z)|$). A function $\Theta$ in $H^\infty(\Pi_{\sigma_0})$ is said to be inner if $|\Theta(\sigma_0+it)|=1$ for almost every point $t\in\RR$.

It is easy to see that if $\Theta$ is inner and  $E=\Theta H^2(\Pi_{\sigma_0})$, then $E$ is a closed subspace of $H^2(\Pi_{\sigma_0})$ invariant by $\tau_v$, $v\geq 0$. Lax--Beurling's theorem gives the converse.
\begin{Thm}[Lax--Beurling]\label{Thm:Beurling}
Let $E$ be a closed subspace of $H^2(\Pi_{\sigma_0})$ such that $\tau_v E\subset E$, $\forall v\geq 0$. Then there is an inner function $\Theta \in H^{\infty}(\Pi_{\sigma_0})$ unique (up to a constant of modulus one) such that 
$E=\Theta H^2(\Pi_{\sigma_0})$.
\end{Thm}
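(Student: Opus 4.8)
The plan is to reduce the statement to the classical Beurling theorem for the Hardy space $H^2(\DDD)$ of the unit disc, in three steps: first, upgrade invariance under the one‑parameter semigroup $\{\tau_v\}_{v\ge 0}$ to invariance under the whole multiplier algebra $H^\infty(\Pi_{\sigma_0})$; second, transport the situation to the disc by a conformal change of variable; third, translate Beurling's description back, together with the uniqueness.

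\emph{Step 1.} On the analytic side, $\tau_v$ is multiplication by $e_v(z):=e^{-v(z-\sigma_0)}$, which lies in $H^\infty(\Pi_{\sigma_0})$ and is in fact inner, since $|e_v(z)|=e^{-v(\re(z)-\sigma_0)}\le 1$ on $\Pi_{\sigma_0}$ with modulus $1$ on the boundary. As $E$ is a linear subspace, $\tau_vE\subset E$ for all $v\ge 0$ gives $pE\subset E$ for every exponential polynomial $p=\sum_k c_k e_{v_k}$ with $v_k\ge 0$. I would then invoke the classical fact that such exponential polynomials are weak-$*$ dense in $H^\infty(\Pi_{\sigma_0})$: Riemann‑sum approximations of the Laplace integrals $\int_0^\infty e_v\,d\mu(v)$ reach all functions $(z-\sigma_0+1)^{-n}$, $n\ge 1$ (take $d\mu(v)=\tfrac{v^{n-1}}{(n-1)!}e^{-v}\,dv$), and these rational functions are weak-$*$ dense in $H^\infty(\Pi_{\sigma_0})$ (transfer, via the Cayley map, the weak-$*$ density of polynomials in $H^\infty(\DDD)$). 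Given $\phi\in H^\infty(\Pi_{\sigma_0})$, pick exponential polynomials $p_n\to\phi$ weak-$*$. For $f\in E$ and $g\in H^2(\Pi_{\sigma_0})$ one has $f\bar g\in L^1(\sigma_0+i\RR)$, hence $\langle p_nf,g\rangle_2\to\langle\phi f,g\rangle_2$; thus $p_nf\to\phi f$ weakly in $H^2(\Pi_{\sigma_0})$, and since $E$, being a norm-closed subspace, is weakly closed and contains each $p_nf$, we get $\phi f\in E$. So $E$ is invariant under multiplication by every element of $H^\infty(\Pi_{\sigma_0})$.

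\emph{Step 2.} Fix a conformal map $\omega\colon\DDD\to\Pi_{\sigma_0}$, e.g.\ $\omega(z)=\sigma_0+\frac{1-z}{1+z}$, and let $U\colon H^2(\Pi_{\sigma_0})\to H^2(\DDD)$, $Uf=(f\circ\omega)\,w$, be the associated unitary operator with $w$ the appropriate outer weight (of the form $w(z)=c/(1+z)$); see \cite[Chapter 8]{Hoffman}. Then $U$ intertwines multiplications, $U(\psi f)=(\psi\circ\omega)\,Uf$, and $\psi\mapsto\psi\circ\omega$ is an isometric isomorphism of $H^\infty(\Pi_{\sigma_0})$ onto $H^\infty(\DDD)$ carrying inner functions to inner functions. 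Hence $E':=UE$ is a closed subspace of $H^2(\DDD)$ invariant under multiplication by every element of $H^\infty(\DDD)$, in particular by the coordinate $z$. Beurling's theorem (disc case, \cite[Chapter 8]{Hoffman}) then gives an inner $B\in H^\infty(\DDD)$, unique up to a unimodular constant, with $E'=B\,H^2(\DDD)$ (with $B=0$ in the degenerate case $E=\{0\}$). \emph{Step 3.} Set $\Theta:=B\circ\omega^{-1}$; it is bounded and analytic on $\Pi_{\sigma_0}$, and since $\omega^{-1}$ carries the line $\re(s)=\sigma_0$ onto $\TTT$ and $|B|=1$ a.e.\ on $\TTT$, $\Theta$ is inner. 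For $g\in H^2(\Pi_{\sigma_0})$ the intertwining relation gives $U(\Theta g)=(\Theta\circ\omega)\,Ug=B\,Ug$, so $U\big(\Theta H^2(\Pi_{\sigma_0})\big)=B\,H^2(\DDD)=E'=UE$; applying $U^{-1}$ yields $E=\Theta H^2(\Pi_{\sigma_0})$. Uniqueness up to a unimodular constant follows from that of $B$ (or directly: if $\Theta_1H^2(\Pi_{\sigma_0})=\Theta_2H^2(\Pi_{\sigma_0})$ with both inner, then $\Theta_1/\Theta_2\in H^2(\Pi_{\sigma_0})$ has unimodular boundary values, so both it and its reciprocal are inner, forcing it to be a unimodular constant).

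I expect the main difficulty to lie in Step 1: passing from the one-parameter semigroup to the full $H^\infty$-module structure rests on the classical but non-trivial weak-$*$ density of exponential polynomials in $H^\infty(\Pi_{\sigma_0})$, together with the weak closedness of norm-closed subspaces. An alternative route avoiding this is the continuous analogue of the Wold decomposition (Cooper's theorem on strongly continuous semigroups of isometries): $\{\tau_v\}$ is a pure semigroup of isometries on $H^2(\Pi_{\sigma_0})$ of multiplicity one, its restriction to $E$ is pure of multiplicity at most one, and matching the two shift models produces the intertwining inner function directly; but the reduction to the disc above seems the most economical given the machinery already assembled in the paper.
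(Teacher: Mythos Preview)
The paper does not prove this theorem: it is quoted as the classical Lax--Beurling theorem, with a reference to \cite[Part~A, Chap.~2 \& 6]{nikolski2} given just before the statement, and no proof is supplied in the paper itself. So there is no ``paper's own proof'' to compare against.

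That said, your argument is a correct proof of the result. A couple of comments on the details. In Step~1 the key analytic point is that the Riemann sums you form are uniformly bounded in $H^\infty(\Pi_{\sigma_0})$ (since $|e_v|\le 1$ and the weights are nonnegative with total mass $1$) and converge pointwise on the boundary line $\sigma_0+i\RR$; together these give weak-$*$ convergence by dominated convergence against any $L^1$ test function. You should make this boundedness explicit, because pointwise convergence in the interior alone does not suffice. Also, your observation that under the Cayley map the functions $(z-\sigma_0+1)^{-n}$ become (scalar multiples of) $(1+w)^n$ is exactly what closes the loop with weak-$*$ density of polynomials in $H^\infty(\DDD)$.

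Your alternative route via the Wold--Cooper decomposition for strongly continuous semigroups of isometries is in fact closer to Lax's original approach and is the argument typically presented in \cite{nikolski2}; it avoids the weak-$*$ density step entirely by working directly with the translation semigroup on $L^2(0,\infty)$ through the Paley--Wiener isomorphism. Either path is acceptable here.
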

\begin{remark}\label{rem:beurling}
When $E$ is spanned by a family of functions, we can precise a little bit more the conclusion of Theorem~\ref{Thm:Beurling}. Indeed, let $E$ be a closed subspace of $H^2(\Pi_{\sigma_0})$ spanned by a family of functions $(f_i)_{i\in I}$, $f_i\in H^2(\Pi_{\sigma_0})$ and  let $f_i=h_i g_i$ be the factorization of $f_i$ in inner factor $h_i$ and outer factor $g_i$. If $\tau_v E\subset E$, $\forall v\geq 0$, then $E=\Theta H^2(\Pi_{\sigma_0})$, where $\Theta=\hbox{gcd}(h_i:i\in I)$ is the greatest common inner divisor of the family $(h_i)_{i\in I}$. 
\end{remark}
~\\
{\bf Proof of Theorem  \ref{Thm:reformulation-HRG}} First of all, note that  
\begin{eqnarray}\label{eq:zeros-communs}
\bigcap_{h\in E_r}h^{-1}(\{0\})=\{s\in\Pi_{\sigma_0}:L(s+r-\sigma_0)=0\},
\end{eqnarray}
where we recall that 
\[
E_r=\mathcal M K_r=\hbox{span }_{H^2(\Pi_{\sigma_0})}\left(h_{A,r}: A \hbox{  a  $m_L$-admissible sequence}\right),
\]
and 
\[
h_{A,r}(s)=-\frac{1}{\sqrt{2\pi}}\,L(s+r-\sigma_0)\hat\varphi(s+r-\sigma_0)g_{A}(s+r-\sigma_0), \;\; \re(s)>\sigma_0.
\]
Indeed, the first inclusion 
\[
\{s\in\Pi_{\sigma_0}:L(s+r-\sigma_0)=0\}\subset\bigcap_{h\in E_r}h^{-1}(\{0\})
\]
is trivial. For the converse inclusion, it is sufficient to notice that according to Lemma~\ref{Lem:cle-zeros-communs}, the only common zeros to functions 
$g_{A}(s+r-\sigma_0)$ is $s=1-r+\sigma_0$ but this zero is compensated by the singularities of $L(s+r-\sigma_0)$ at this point.

The proofs of $(4)$ $\Longrightarrow$ $(3)$ and $(3)\Longrightarrow (2)$  are trivial.

$(2)\Longrightarrow (1)$: let $\lambda\in\Pi_{\sigma_0}$ such that  $d_r(\lambda)=0$.  Then according to Theorem~\ref{Thm:disque-sans-zeros}, the function $L$ does not vanish on
\[
r-\sigma_0+\left\{\mu\in\CC:\left|\frac{\mu-\lambda}{\mu+\bar\lambda-2\sigma_0}\right|<1\right\},
\]
and it follows from the paragraph after Theorem~\ref{Thm:disque-sans-zeros} that this region is precisely the half-plane  $\Pi_{r}$.

$(1)\Longrightarrow (4)$: denote by $S_\beta$ the "shift" on $L^2_*((0,1),\frac{dt}{t^{1-2\sigma_0}})$ defined by 
\[
(S_\beta f)(t):=\beta^{-\sigma_0}f\left(\frac{t}{\beta}\right), \qquad 0<\beta\leq 1,0\leq t\leq 1.
\]
It is clear that $S_\beta$ is a unitary operator on $L^2_*((0,1),\frac{dt}{t^{1-2\sigma_0}})$. We will show that
\begin{eqnarray}\label{eq:invariance-K-gamma-shift}
S_\beta K_r\subset K_r \qquad (0<\beta\leq 1).
\end{eqnarray}
If $A=(\alpha,c)$ is a $m_L$-admissible sequence and $0\leq t\leq 1$, we have 
\begin{align*}
(S_\beta f_{A,r})(t)=&\beta^{-\sigma_0}f_{A,r}\left(\frac{t}{\beta}\right)\\
=&\beta^{-\sigma_0}\frac{t^{r-\sigma_0}}{\beta^{r-\sigma_0}}\sum_{j=1}^{\ell(\alpha)}c_j\psi_k\left(\frac{\alpha_j\beta}{t}\right)\\
=&\beta^{-r} f_{A',r}(t),
\end{align*}
where $A'=(\alpha',c)$ is the $m_L$-admissible sequence with $\alpha'=(\alpha'_j)_{j}=(\beta\alpha_j)_j$ and so $f_{A',r}\in K_r$. Hence for every admissible sequence 
$A$, we have proved that $S_\beta f_{A,r}\in K_r$. Since $S_\beta$ is a bounded operator and the functions $f_{A,r}$ span the subspace $K_r$, we deduce \eqref{eq:invariance-K-gamma-shift}. Therefore we obtain that $E_r=\mathcal M K_r$ is a closed subspace of $H^2(\Pi_{\sigma_0})$ which is invariant under the semi-group of operators  $\mathcal M S_\beta \mathcal M^{-1}$, $0<\beta\leq 1$. Now let us show that
\begin{eqnarray}\label{eq:Sbeta-unitairement-equivalent-taubeta}
 \mathcal M S_\beta\mathcal M^{-1}=\tau_v,
\end{eqnarray}
with $v=-\log\beta$.
If $h\in H^2(\Pi_{\sigma_0})$ and if $f\in L^2_*((0,1),\frac{dt}{t^{1-2\sigma_0}})$ is such that $\mathcal M^{-1}h=f$, we have for $\re(s)>\sigma_0$,
\begin{align*}
(\mathcal M S_\beta \mathcal M^{-1}h)(s)=&\frac{1}{\sqrt{2\pi}}\int_0^{\infty}(S_\beta f)(t)t^{s-1}\,dt\\
=&\frac{1}{\sqrt{2\pi}}\int_0^{+\infty}\beta^{-\sigma_0}f\left(\frac{t}{\beta}\right)t^{s-1}\,dt\\
=&\beta^{s-\sigma_0}\frac{1}{\sqrt{2\pi}}\int_0^{+\infty}f(u)u^{s-1}\,du\\
=&\beta^{s-\sigma_0} (\mathcal M f)(s)=\beta^{s-\sigma_0} h(s).
\end{align*}
Hence we have 
\[
(\mathcal M S_\beta \mathcal M^{-1}h)(\sigma_0+it)=\beta^{it}h(\sigma_0+it)=e^{it\log\beta}h(\sigma_0+it)=(\tau_v h)(\sigma_0+it),
\]
with $v=-\log\beta$, which proves \eqref{eq:Sbeta-unitairement-equivalent-taubeta}. We therefore obtain that $\tau_v E_r\subset E_r$, for all $v\geq 0$ and then Lax--Beurling's Theorem (see Theorem~\ref{Thm:Beurling}) implies that there is an inner function $\Theta$ in the half-plane $\Pi_{\sigma_0}$ such that 
$E_r=\Theta H^2(\Pi_{\sigma_0})$. Moreover we know that $\Theta=BS$, where $B$ is a Blaschke product and $S$ is a singular inner function (\cite{nikolski2}). But according to Remark~\ref{rem:beurling}, the zeros of $B$ coincide with the set of common zeros of functions $h\in E_r$. Hence it follows from \eqref{eq:zeros-communs} and the hypothesis that $B$ has no zeros. In other words, $B\equiv 1$. We will now show that $S\equiv 1$. First note that since $h_{A,r}$ is analytic on $\re(s)>2\sigma_0-r$ and since $S$ is a common inner divisor of all functions in $E_r$, it follows that $S$ can be continued analytically through the axis $\sigma_0+i\RR$. In particular, this forces $S$ to be of the form $S(s)=e^{-a(s-\sigma_0)}$, for some $a\geq 0$ (see for instance \cite[Part A, Chap. 4 \& 6]{nikolski2}). Now let $h_{A,r}$ be a function in $E_r$ and write $h_{A,r}=S h$, with $h\in H^2(\Pi_{\sigma_0})$. 
\begin{Lem}\label{Lem:residu-logarithmique}
Let $h\in H^2(\Pi_{\sigma_0})$. Then 
\[
\limsup_{\sigma\to+\infty}\frac{\log |h(\sigma)|}{\sigma}\leq 0.
\]
\end{Lem}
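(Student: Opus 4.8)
For $h\in H^2(\Pi_{\sigma_0})$, we must show $\limsup_{\sigma\to+\infty}\frac{\log|h(\sigma)|}{\sigma}\leq 0$.

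\textbf{Approach.} The plan is to extract the estimate directly from the reproducing kernel formalism already set up in Section~\ref{Zeros_free_regions}. For $\sigma>\sigma_0$ the real point $\sigma$ lies in $\Pi_{\sigma_0}$, so the reproducing property \eqref{eq1:noyau-reproduisant-H2} gives $h(\sigma)=\langle h,k_\sigma\rangle_2$, and Cauchy--Schwarz yields
\[
|h(\sigma)|\leq \|h\|_2\,\|k_\sigma\|_2 .
\]
Combining this with the explicit norm \eqref{eq3:noyau-reproduisant-H2}, namely $\|k_\sigma\|_2^2=\tfrac{1}{4\pi(\sigma-\sigma_0)}$, we obtain the pointwise bound
\[
|h(\sigma)|\leq \frac{\|h\|_2}{2\sqrt{\pi(\sigma-\sigma_0)}},\qquad \sigma>\sigma_0.
\]

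\textbf{Conclusion of the argument.} From here one simply takes logarithms and divides by $\sigma$:
\[
\frac{\log|h(\sigma)|}{\sigma}\leq \frac{\log\|h\|_2-\tfrac12\log\!\big(4\pi(\sigma-\sigma_0)\big)}{\sigma}\xrightarrow[\ \sigma\to+\infty\ ]{}0,
\]
since the numerator on the right is $O(\log\sigma)$. Taking $\limsup$ as $\sigma\to+\infty$ gives the claim. If $h\equiv 0$ (or more generally if $h(\sigma)=0$ for all large $\sigma$), the inequality $\log|h(\sigma)|=-\infty$ makes the statement trivially true, so one may assume $h\not\equiv 0$ and, by the analyticity of $h$, that $h(\sigma)\neq 0$ for $\sigma$ large, which is all that is needed for the displayed computation to be meaningful.

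\textbf{Main obstacle.} There is essentially none: the estimate is an immediate consequence of the Hilbert-space structure of $H^2(\Pi_{\sigma_0})$ together with the already-computed reproducing kernel norm. The only point requiring a word of care is the degenerate case where $h$ vanishes on the real axis, which is handled trivially as above. (Alternatively, one could avoid the reproducing kernel and instead apply the sub-mean-value inequality $|h(\sigma)|^2\leq \tfrac{1}{\pi\rho^2}\int_{D(\sigma,\rho)}|h|^2\,dA$ with $\rho=(\sigma-\sigma_0)/2$, then bound the disc integral by a strip integral and use $\int_\RR|h(x+it)|^2\,dt\leq\|h\|_2^2$ for all $x>\sigma_0$; this reproduces the same $O\big((\sigma-\sigma_0)^{-1/2}\big)$ decay and hence the same conclusion.)
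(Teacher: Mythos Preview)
Your proof is correct and is essentially identical to the paper's: apply the reproducing-kernel bound $|h(\sigma)|\le\|h\|_2\,\|k_\sigma\|_2=\|h\|_2/\big(2\sqrt{\pi(\sigma-\sigma_0)}\big)$, take logarithms, divide by $\sigma$, and let $\sigma\to\infty$. (Your aside about assuming $h(\sigma)\neq 0$ for large $\sigma$ is unnecessary---the pointwise inequality already holds with the convention $\log 0=-\infty$---but it does no harm.)
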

\begin{proof} [{\it Proof of lemma \ref{Lem:residu-logarithmique}}] Using \eqref{eq1:noyau-reproduisant-H2} and \eqref{eq3:noyau-reproduisant-H2}, we get 
\[
|h(\sigma)|\leq \|h\|_2\|k_\sigma\|_2=\frac{\|h\|_2}{2\sqrt\pi (\sigma-\sigma_0)^{1/2}},
\]
for every $\sigma>\sigma_0$. Hence 
\[
\frac{\log |h(\sigma)|}{\sigma}\leq \frac{O(1)}{\sigma}-\frac{1}{2}\frac{\log(\sigma-\sigma_0)}{\sigma},
\]
which gives the result letting $\sigma \rightarrow +\infty$.
\end{proof}
~\\
On the one hand, according to the previous lemma, we have
\begin{eqnarray}\label{eq:123}
\limsup_{x\to+\infty}\frac{\log|h_{A,r}(x)|}{x}\leq\limsup_{x\to+\infty}\frac{\log|S(x)|}{x}=-a\leq 0.
\end{eqnarray}
And on the other hand, writing 
\[
h_{A,r}(x)=L(x+r-\sigma_0)\hat\varphi(x+r-\sigma_0)\sum_{j=1}^{\ell(\alpha)}c_j \alpha_j^{x+r-\sigma_0},
\]
we can assume, by Lemma \ref{Lem:cle-zeros-communs}, that $0<\alpha_1<\cdots <\alpha_{\ell(\alpha)}=1$ and that $c_{\ell(\alpha)}\neq 0$. Then we have
\[
|h_{A,r}(x)|\sim |a_1| |\hat\varphi(x+r-\sigma_0) c_{\ell(\alpha)}|,\qquad x\to+\infty.
\]
Hence we can find $x_0>1$ such that 
\[
|h_{A,r}(x)|\geq \frac{|a_1|}{2}\ |\hat\varphi(x+r-\sigma_0) c_{\ell(\alpha)}|,\qquad (x>x_0),
\]
which gives
\[
\log |h_{A,r}(x)|\geq -\log 2+\log|a_1|+\log|c_{\ell(\alpha)}| +\log|\hat\varphi(x+r-\sigma_0)| ,\; (x>x_0).
\]
Therefore we obtain that
\[
\limsup_{x\to +\infty} \frac{\log|h_{A,r}(x)|}{x}\geq \limsup_{x\to+\infty}\frac{\log|\hat\varphi(x+r-\sigma_0)|}{x} = 0.
\]
Using \eqref{eq:123}, we conclude that $a=0$. Hence $S\equiv 1$, which gives $E_r=H^2(\Pi_{\sigma_0})$, that is $K_r=L^2_*((0,1),\frac{dt}{t^{1-2\sigma_0}})$.  \hfill$\square$
\begin{remark} In fact, with the hypothesis on $\varphi$, one can easily show that we always have $ \limsup_{x\to+\infty}\frac{\log|\hat\varphi(x+r-\sigma_0)|}{x} \leq 0$. Note that there exists some $\varphi$ for which the $\limsup$ is negative.
\end{remark}

%
%
%
%
%
%
%
%

\section{Some examples and applications}\label{examples}
\subsection{The Riemann zeta function}
Let 
\[
\zeta(s)=\sum_{n\geq 1}\frac{1}{n^s} \qquad (\re(s)>1),
\]
be the Riemann zeta function. Then it is well known  that $\zeta$ can be meromorphically continued in the whole plane $\CC$, with  a unique pole of order $1$ at the point $s=1$ (\cite{titchmarsh}).  Thus, the function $\zeta$ satisfies our hypothesis with $m_L=1$ (and for instance, $\sigma_0=0$).  Now let us consider the function $\varphi$ defined on $[0,+\infty[$ by 
\[
\varphi(t)=\begin{cases}
(1-t)^{-\sigma_1} &\hbox{if }0\leq t <  1\\
0&\hbox{if }t\geq 1
\end{cases} ,
\]
where $\sigma_1 <1/2$ is fixed. 

Then an elementary computation shows that $\hat\varphi(s)=\frac{\Gamma(s) \Gamma(1-\sigma_1)}{\Gamma(s+1-\sigma_1)}$, $\re(s)>0$. Hence, 
\begin{equation}\label{stirling}
|\hat{\varphi}(s)| \sim \frac{\Gamma(1-\sigma_1)}{|t|^{1-\sigma_1}} \;\;\; \mbox{ as } |t|=|\Im(s)| \rightarrow \infty  .
\end{equation}
We now illustrate\footnote{It is an illustration since here Theorem \ref{Thm:cns-integrabilite-psi} is useless as it can be seen in Theorem \ref{pasbesoin}.} the use of Theorem \ref{Thm:cns-integrabilite-psi} in order to determine the values of $r$ such that
$$
\psi \in L^2\left((1,\infty), \frac{du}{u^{1+2r}}\right).
$$
An obvious computation shows that 
$$
\psi(u)= \frac{u}{1-\sigma_1} - \sum_{n<u}\frac{1}{\left(1-\frac{n}{u}\right)^{\sigma_1}}, \qquad u>0.
$$
Let $\mu=\mu(1/2)$ be a convexity bound for $\zeta(1/2+it)$ (\cite{tenenbaum}), so that we have for all~$\varepsilon >0$,
$$
|\zeta(r+it)| =\begin{cases}O_\varepsilon\left(|t|^{\frac12-(1-2\mu)r+\varepsilon}\right) & \mbox{ if } 0\leq r \leq 1/2, \\
O_\varepsilon\left(|t|^{2\mu(1-r)+\varepsilon}\right) & \mbox{ if } 1/2\leq r <1.
\end{cases} 
$$ 
We know that $\mu<1/4$. We get from \eqref{stirling}
$$
\left|\zeta(r+it)\hat{\varphi}(r+it)\right|^2 = \begin{cases}
O_\varepsilon \left(|t|^{-1-(2-4\mu)r+ 2\sigma_1+\varepsilon}\right)  & \mbox{ if } 0\leq r \leq 1/2, \\ 
O_\varepsilon \left(|t|^{-2+2\sigma_1+4\mu(1-r)+\varepsilon}\right)  & \mbox{ if } 1/2\leq r <1. \\ 
\end{cases} 
$$
So a direct application of Theorem \ref{Thm:cns-integrabilite-psi} gives:
\begin{Prop} With the notation above, then $\psi \in L^2\left((1,\infty), \frac{du}{u^{1+2r}}\right)$ if one the following holds:
\begin{itemize}
\item[$\bullet$] $\max(0,\sigma_1/(1-2\mu)) <r \leq 1/2$;
\item[$\bullet$] $\max(1/2,1-(1-2\sigma_1)/(4\mu)) < r <1$;
\item[$\bullet$] $r>1$.
\end{itemize}
\end{Prop}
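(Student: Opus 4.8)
The plan is to apply Theorem~\ref{Thm:cns-integrabilite-psi} directly: we must determine for which $r>\sigma_0=0$ (with $r\neq 1$ when $m_L=1$, as here) the function $t\mapsto \zeta(r+it)\hat\varphi(r+it)$ belongs to $L^2(\RR)$, and then conclude the same for $\psi\in L^2((1,\infty),du/u^{1+2r})$. Since the paper has already recorded the pointwise bounds
\[
\left|\zeta(r+it)\hat\varphi(r+it)\right|^2 = \begin{cases} O_\varepsilon\left(|t|^{-1-(2-4\mu)r+2\sigma_1+\varepsilon}\right) & \text{if }0\le r\le 1/2,\\ O_\varepsilon\left(|t|^{-2+2\sigma_1+4\mu(1-r)+\varepsilon}\right) & \text{if }1/2\le r<1, \end{cases}
\]
the whole matter reduces to checking when the exponent of $|t|$ is strictly less than $-1$, which is exactly the condition that makes the corresponding tail integral $\int_{|t|>1}|t|^{(\text{exponent})}\,dt$ converge (the behaviour near $t=0$ causes no trouble since $\hat\varphi$ and $\zeta(r+\cdot)$ are bounded there for $r>0$, $r\neq 1$).

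First I would treat the range $0<r\le 1/2$. The exponent is $-1-(2-4\mu)r+2\sigma_1+\varepsilon$; requiring it to be $<-1$ (for sufficiently small $\varepsilon$) amounts to $(2-4\mu)r>2\sigma_1$, i.e. $r>\sigma_1/(1-2\mu)$ (using $\mu<1/4$, so $2-4\mu>0$). Combined with $r>0$ this gives the first bullet $\max(0,\sigma_1/(1-2\mu))<r\le 1/2$. Next, for $1/2\le r<1$ the exponent is $-2+2\sigma_1+4\mu(1-r)+\varepsilon$; demanding it $<-1$ gives $4\mu(1-r)<1-2\sigma_1$, i.e. $1-r<(1-2\sigma_1)/(4\mu)$, that is $r>1-(1-2\sigma_1)/(4\mu)$; together with $r\ge 1/2$ this is the second bullet. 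Finally, for $r>1$ the claim is already Corollary~\ref{Cor:estimation-norme-psi-r-plusgrand1} (or simply note $\zeta$ and $\hat\varphi$ are bounded on $\Pi_r$ and $|\hat\varphi(r+it)|^2$ is integrable by Lemma~\ref{lemme:intermediaire}), giving the third bullet. In each case Theorem~\ref{Thm:cns-integrabilite-psi} then yields $\psi\in L^2((1,\infty),du/u^{1+2r})$.

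There is really no serious obstacle here: the statement is an ``obvious application'' in the authors' own words, and the only thing to be slightly careful about is handling the arbitrarily small $\varepsilon$ (one must phrase the convergence criterion as strict inequality of exponents so that a small enough $\varepsilon$ can be absorbed), together with noting that $\mu<1/4$ guarantees the coefficient $2-4\mu$ of $r$ in the first range is positive so that the inequality inverts in the expected direction, and recalling the standing exclusion $r\neq 1$ when $m_L\ge 1$ so that Theorem~\ref{Thm:cns-integrabilite-psi} applies. The mild ``main point,'' if any, is simply correctly transcribing the three exponent inequalities into the three displayed ranges for $r$.
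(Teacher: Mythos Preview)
Your proof is correct and follows exactly the approach the paper intends: the authors themselves write ``So a direct application of Theorem~\ref{Thm:cns-integrabilite-psi} gives'' and do not spell out the exponent comparison you have carried out. Your handling of the $\varepsilon$, the sign of $2-4\mu$, the local boundedness near $t=0$, and the appeal to Corollary~\ref{Cor:estimation-norme-psi-r-plusgrand1} for $r>1$ are all in order.
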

By a result of Huxley (\cite{huxley}), we can take $\mu=\frac{32}{205}$ .
In particular, one may have  $r=1/2$ if $\sigma_1$ is chosen such that $\sigma_1<\frac{141}{410} \approx 0.343\dots$.

If we assume the Lindel\"of Hypothesis, we have $\mu(1/2)=0$ and a similar computation as above implies that if $r\neq 1$ and $r>\max(0,\sigma_1)$, then 
$$
\psi \in L^2\left((1,\infty), \frac{du}{u^{1+2r}}\right).
$$

In fact, the work above is useless for the Riemann zeta function since we can prove directly the following:
\begin{Thm}\label{pasbesoin} Let $r\neq 1$. Then  $\psi\in  L^2((1,\infty), \frac{du}{u^{1+2r}})$ if and only if $r>\max(0,\sigma_1)$. Moreover, if the condition holds we have
$$
\Vert \psi \Vert_{L^2((1,\infty),\frac{du}{u^{1+2r}})}^2  \leq C(\sigma_1) \zeta(1+2(r-\sigma_1)),
$$
where $C(\sigma_1) =  \frac{2^{3-2\sigma_1}}{(3-2\sigma_1)(1-\sigma_1)^2}+\frac{2^{2-\sigma_1}}{(1-\sigma_1)^2}+\frac{1}{1-2\sigma_1}$.
\end{Thm}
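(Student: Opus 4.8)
The starting point is the explicit formula recalled just before the statement, together with the observation that the residue term is itself an integral: since $\int_0^u(1-t/u)^{-\sigma_1}\,dt=\frac{u}{1-\sigma_1}$, one has, for $u>1$,
\[
\psi(u)=\int_0^u\Big(1-\frac tu\Big)^{-\sigma_1}dt-\sum_{n<u}\Big(1-\frac nu\Big)^{-\sigma_1}=\int_0^u g_u(t)\,dt-\sum_{n=1}^{\lfloor u\rfloor}g_u(n),
\]
where $g_u(t)=(1-t/u)^{-\sigma_1}$ is monotone on $[0,u)$ (nondecreasing if $\sigma_1\geq0$, nonincreasing if $\sigma_1\leq 0$). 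So $\psi(u)$ is a difference between an integral and a Riemann-type sum of a monotone function, and the plan is to control it pointwise by comparing the integral over each unit interval with the corresponding sample value.

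Concretely, I would split $\int_0^u=\int_0^1+\sum_{n=1}^{\lfloor u\rfloor-1}\int_n^{n+1}+\int_{\lfloor u\rfloor}^u$ and use the monotonicity of $g_u$ to obtain, with $M=\lfloor u\rfloor$ and $v=\{u\}$,
\[
\int_0^1 g_u - g_u(M)\ \le\ \psi(u)\ \le\ \int_0^1 g_u+\int_M^u g_u\qquad(\sigma_1\geq 0),
\]
and the analogous two-sided bound when $\sigma_1\leq 0$. The pieces here are elementary: $\int_0^1 g_u\le\frac1{1-\sigma_1}$, $g_u(M)=u^{\sigma_1}v^{-\sigma_1}$, and $\int_M^u g_u=\frac{u^{\sigma_1}v^{1-\sigma_1}}{1-\sigma_1}$. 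This yields $|\psi(u)|^2\ll u^{2\sigma_1}v^{-2\sigma_1}+\frac1{(1-\sigma_1)^2}\big(1+u^{\sigma_1}v^{1-\sigma_1}\big)^2$ for $\sigma_1\in[0,1/2)$, and $\psi(u)=O(1)$ when $\sigma_1\leq0$. For the sufficiency I would integrate this bound against $du/u^{1+2r}$, decompose $(1,+\infty)=\bigsqcup_{N\geq1}[N,N+1)$, substitute $u=N+v$, perform the $v$-integrals — in particular $\int_0^1 v^{-2\sigma_1}\,dv=\frac1{1-2\sigma_1}$, which is exactly where the hypothesis $\sigma_1<1/2$ is used — and sum the resulting series $\sum_{N\geq1}N^{-1-2(r-\sigma_1)}=\zeta(1+2(r-\sigma_1))$, folding the remaining pieces into the same $\zeta$-value via $\int_1^\infty u^{-s}\,du<\zeta(s)$. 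Tracking the numerical factors at each step produces the constant $C(\sigma_1)$; in particular the bound is finite precisely when $1+2(r-\sigma_1)>1$, i.e., $r>\max(0,\sigma_1)$.

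For the necessity I would establish a matching lower bound showing divergence when $r\leq\max(0,\sigma_1)$. When $\sigma_1>0$ the mechanism is the blow-up of $\psi$ near integers: refining the lower estimate above, once $v=\{u\}$ is below an absolute threshold one gets $\psi(u)<-\tfrac14 u^{\sigma_1}v^{-\sigma_1}$, so $\int_1^\infty|\psi(u)|^2\,du/u^{1+2r}\gg\sum_{N}N^{2\sigma_1-1-2r}\int_0^{v_0}v^{-2\sigma_1}\,dv$, which diverges as soon as $2\sigma_1-1-2r\geq-1$, i.e., $r\leq\sigma_1$. When $\sigma_1\leq0$ the mechanism is instead the non-decay of $\psi$: the main terms of the integral and of the sum cancel and the boundary analysis gives $\liminf_{u\to\infty}\psi(u)>0$, whence $\int_1^\infty|\psi(u)|^2\,du/u^{1+2r}\gg\int_1^\infty du/u^{1+2r}=+\infty$ for $r\leq0$. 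In either case $\psi\notin L^2((1,+\infty),du/u^{1+2r})$.

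The routine analytic inputs — the monotone-sum-versus-integral comparison, the convergence $\int_0^1 v^{-2\sigma_1}\,dv<\infty$, and the identification of $\sum_{N}N^{-1-2(r-\sigma_1)}$ with a value of $\zeta$ — are all standard. I expect the main work to lie in the bookkeeping for the sufficiency direction: isolating the right boundary terms in the pointwise bound for $\psi(u)$ and then squaring, integrating, and collecting constants so that the final estimate comes out in the clean closed form $C(\sigma_1)\zeta(1+2(r-\sigma_1))$ rather than merely $O_{\sigma_1}\!\big(\zeta(1+2(r-\sigma_1))\big)$.
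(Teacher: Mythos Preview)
Your approach is essentially the paper's: both rest on the monotone comparison between $\int g_u$ and $\sum g_u(n)$ on unit intervals, then sum over $k\ge 1$. Two small points are worth noting.

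For the explicit constant, the paper's bookkeeping is cleaner because it first pulls the factor $u^{\sigma_1}$ outside, writing
\[
\|\psi\|_2^2=\int_1^\infty\bigl|u^{-\sigma_1}\psi(u)\bigr|^2\,\frac{du}{u^{1+2(r-\sigma_1)}}.
\]
The comparison then yields $|u^{-\sigma_1}\psi(u)|\le \frac{(v+1)^{1-\sigma_1}}{1-\sigma_1}+v^{-\sigma_1}$ on $(k,k+1]$, with $v=u-k$, a bound that depends on $v$ alone and not on $k$. Hence $\int_k^{k+1}|u^{-\sigma_1}\psi|^2\,du\le\widetilde C(\sigma_1)$ uniformly, and the product $\widetilde C(\sigma_1)\,\zeta(1+2(r-\sigma_1))$ drops out immediately; expanding the square of $\frac{(v+1)^{1-\sigma_1}}{1-\sigma_1}+v^{-\sigma_1}$ is exactly what produces the three terms of $C(\sigma_1)$. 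Your version keeps the $u^{\sigma_1}$ inside and bounds $\int_0^1 g_u$ separately by $1/(1-\sigma_1)$, which leaves residual $u$-dependence in the pointwise estimate and makes it genuinely awkward to land on the stated $C(\sigma_1)$ rather than some other constant.

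For the necessity when $\sigma_1\le 0$, the paper simply remarks there is nothing to prove: throughout this section $\sigma_0=0$, so the standing hypothesis $r>\sigma_0$ already forces $r>\max(0,\sigma_1)$. Your claim $\liminf_{u\to\infty}\psi(u)>0$ happens to be true for $\sigma_1<0$ (in fact $\psi(u)\to 1/2$), but it does not follow from the two-sided bounds you wrote, which only give $\psi(u)\ge c/u$; and it is false at $\sigma_1=0$, where $\psi(u)=\{u\}$. None of this affects the theorem as stated in context.
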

\begin{proof}
In the case of the zeta function and for the  previous choice of $\varphi$, we have
$$
\psi(u)=\frac{u}{1-\sigma_1}-\sum_{n<u}\left(1-\frac{n}{u}\right)^{-\sigma_1}\qquad (u>0).
$$
Hence
\begin{eqnarray*}
\Vert \psi \Vert_2^2 &=& \int_1^\infty \left|\frac{u}{1-\sigma_1}-\sum_{n<u}\left(1-\frac{n}{u}\right)^{-\sigma_1} \right|^2 \frac{du}{u^{2r+1}} \\
&=&\int_1^\infty \left|\frac{u^{1-\sigma_1}}{1-\sigma_1}-\sum_{n<u}\left(u-n\right)^{-\sigma_1} \right|^2 \frac{du}{u^{1+2r-2\sigma_1}} \\
					&=& \sum_{k=1}^\infty \int_k^{k+1} \left|\frac{u^{1-\sigma_1}}{1-\sigma_1}-\sum_{n=1}^k\left(u-n\right)^{-\sigma_1}  \right|^2 \frac{du}{u^{1+2r-2\sigma_1 }}
\end{eqnarray*}
Now, if $u\in (k,k+1]$ and $n<k$, we have
$$
\int_{n-1}^n (u-t)^{-\sigma_1} dt \leq (u-n)^{-\sigma_1} \leq \int_n^{n+1} (u-t)^{-\sigma_1} dt,
$$ 
and so
\begin{equation}\label{equation_zeta}
\frac{1}{u^{\sigma_1}}+\frac{(u-k)^{1-\sigma_1}}{1-\sigma_1}\le \frac{u^{1-\sigma_1}}{1-\sigma_1}-\sum_{n=1}^{k-1}\left(u-n\right)^{-\sigma_1}\le \frac{(u-k+1)^{1-\sigma_1}}{1-\sigma_1}.
\end{equation}
In particular, one has
$$
\frac{u^{1-\sigma_1}}{1-\sigma_1}-\sum_{n=1}^{k}\left(u-n\right)^{-\sigma_1}=-\frac{1}{(u-k)^{\sigma_1}}+O_{\sigma_1}(1)\qquad \big(  u\in (k,k+1] ,\thinspace k\ge 1\big)
$$
so for each $k$, the integral
$$
 \int_k^{k+1} \left|\frac{u^{1-\sigma_1}}{1-\sigma_1}-\sum_{n=1}^k\left(u-n\right)^{-\sigma_1}  \right|^2 du
$$
converges if and only if $2\sigma_1<1$.

Moreover, if $2\sigma_1<1$ these integrals are uniformly bounded with respect to $k$ because using \eqref{equation_zeta}, we have
\begin{align*}
\int_k^{k+1} \left|\frac{u^{1-\sigma_1}}{1-\sigma_1}-\sum_{n=1}^k\left(u-n\right)^{-\sigma_1}  \right|^2 du&\le \int_k^{k+1} \left|\frac{(u-k+1)^{1-\sigma_1}}{1-\sigma_1}+\frac{1}{(u-k)^{\sigma_1}}  \right|^2 du\\
&\le  \int_0^{1} \left|\frac{(u+1)^{1-\sigma_1}}{1-\sigma_1}+\frac{1}{u^{\sigma_1}}  \right|^2 du=\widetilde{C}(\sigma_1)
\end{align*}
From this, we deduce that for $r>\sigma_1$
\begin{align*}
\Vert \psi \Vert^2_2&=\sum_{k=1}^\infty \int_k^{k+1} \left|\frac{u^{1-\sigma_1}}{1-\sigma_1}-\sum_{n=1}^k\left(u-n\right)^{-\sigma_1}  \right|^2 \frac{du}{u^{1+2r-2\sigma_1 }}\\
&\le \sum_{k=1}^\infty \frac{1}{k^{1+2r-2\sigma_1}}\int_k^{k+1} \left|\frac{u^{1-\sigma_1}}{1-\sigma_1}-\sum_{n=1}^k\left(u-n\right)^{-\sigma_1}  \right|^2 du\\
&\le \widetilde{C}(\sigma_1)\sum_{k=1}^\infty \frac{1}{k^{1+2r-2\sigma_1}}<+\infty.
\end{align*}
As far as $\widetilde{C}(\sigma_1)$ is concerned, we have the following bound
\begin{align*}
\widetilde{C}(\sigma_1)& = \int_0^{1} \frac{(u+1)^{2-2\sigma_1}}{(1-\sigma_1)^2}du+\frac{2}{1-\sigma_1}\int_0^{1}\frac{(1+u)^{1-\sigma_1}}{u^{\sigma_1}}du+\int_0^{1}\frac{du}{u^{2\sigma_1}}\\
&\le \int_0^{1} \frac{(u+1)^{2-2\sigma_1}}{(1-\sigma_1)^2}du+ \frac{2^{2-\sigma_1}}{1-\sigma_1}\int_0^{1}\frac{du}{u^{\sigma_1}}+ \int_0^{1}\frac{du}{u^{2\sigma_1}}\\
&\le \frac{2^{3-2\sigma_1}}{(3-2\sigma_1)(1-\sigma_1)^2}+\frac{2^{2-\sigma_1}}{(1-\sigma_1)^2}+\frac{1}{1-2\sigma_1}= C(\sigma_1),
\end{align*}
which proves the first part of the theorem.

On the other hand, suppose that $\sigma_1>0$. (If $\sigma_1 \leq 0$ there is nothing to prove since by hypothesis $r>\sigma_0$ and $\sigma_0=0$ in our case).

It sufficies to find $U(\sigma_1)>0$ such that
$$
\int_k^{k+1} \left|\frac{u^{1-\sigma_1}}{1-\sigma_1}-\sum_{n=1}^k\left(u-n\right)^{-\sigma_1}  \right|^2 du\ge U(\sigma_1)
$$
uniformely in $k$. In that case, we shall have 
\begin{align*}
\Vert \psi \Vert^2_2&=\sum_{k=1}^\infty \int_k^{k+1} \left|\frac{u^{1-\sigma_1}}{1-\sigma_1}-\sum_{n=1}^k\left(u-n\right)^{-\sigma_1}  \right|^2 \frac{du}{u^{1+2r-2\sigma_1 }}\\
&\ge \sum_{k=1}^\infty \frac{1}{(k+1)^{1+2r-2\sigma_1}}\int_k^{k+1} \left|\frac{u^{1-\sigma_1}}{1-\sigma_1}-\sum_{n=1}^k\left(u-n\right)^{-\sigma_1}  \right|^2 du\\
&\ge U(\sigma_1)\sum_{k=1}^\infty \frac{1}{(k+1)^{1+2r-2\sigma_1}}
\end{align*}
hence $r>\sigma_1$. 

It now remains to prove 
$$
\int_k^{k+1} \left|\frac{u^{1-\sigma_1}}{1-\sigma_1}-\sum_{n=1}^k\left(u-n\right)^{-\sigma_1}  \right|^2 du\ge U(\sigma_1)
$$
uniformely in $k$.

From \eqref{equation_zeta}, we see that we have in fact
$$
\frac{u^{1-\sigma_1}}{1-\sigma_1}-\sum_{n=1}^{k}\left(u-n\right)^{-\sigma_1} = -\frac{1}{(u-k)^{\sigma_1}} + \frac{\kappa(u)}{1-\sigma_1}\qquad \big(  u\in (k,k+1] ,\thinspace k\ge 1,\thinspace \sigma_1<1/2\big),
$$
where $0 \leq \kappa(u) \leq (u-k+1)^{1-\sigma_1}$ for all $u \in (k,k+1]$.

We  deduce that for any  $0<\delta<1$
\begin{align*}
\int_k^{k+1} \left|\frac{u^{1-\sigma_1}}{1-\sigma_1}-\sum_{n=1}^k\left(u-n\right)^{-\sigma_1}  \right|^2 du &\geq \int_k^{k+\delta} \left|\frac{u^{1-\sigma_1}}{1-\sigma_1}-\sum_{n=1}^k\left(u-n\right)^{-\sigma_1}  \right|^2 du\\
&\ge \int_k^{k+\delta} \left( \frac{1}{(u-k)^{2\sigma_1}}-2\frac{(u-k+1)^{1-\sigma_1}}{(1-\sigma_1)(u-k)^{\sigma_1}}\right) du  \\
&\ge \int_k^{k+\delta} \left( \frac{1}{(u-k)^{2\sigma_1}}-2\frac{(1+\delta)^{1-\sigma_1}}{(1-\sigma_1)(u-k)^{\sigma_1}}\right) du \\
&\ge \frac{\delta^{1-2\sigma_1}}{1-2\sigma_1}-\frac{2(1+\delta)^{1-\sigma_1}\delta^{1-\sigma_1}}{(1-\sigma_1)^2}\\
&\ge \delta^{1-2\sigma_1}\left( \frac{1}{1-2\sigma_1}-\frac{2(1+\delta)^{1-\sigma_1}\delta^{\sigma_1}}{(1-\sigma_1)^2} \right).\\
\end{align*}
We have the expected result by choosing
$$
U(\sigma_1):=\delta^{1-2\sigma_1}\left( \frac{1}{1-2\sigma_1}-\frac{2(1+\delta)^{1-\sigma_1}\delta^{\sigma_1}}{(1-\sigma_1)^2} \right)
$$
with $\delta>0$ small enough so that
$$
\frac{2(1+\delta)^{1-\sigma_1}\delta^{\sigma_1}}{(1-\sigma_1)^2}<\frac{1}{1-2\sigma_1}.
$$

\end{proof}
\subsubsection{Zero-free discs for $\zeta$}
Let $r>\max(0,\sigma_1)$, $r\not=1$. Then according to Theorem~\ref{pasbesoin}, the function $\psi \in L^2\left((1,\infty), \frac{du}{u^{1+2r}}\right)$. Let  $A=(\alpha,c)$ be a 1-admissible sequence, which means that 
$$
\sum_{j=1}^{\ell(\alpha)} c_j \alpha_j =0,
$$
with $0<\alpha_j\leq 1$, $c_j\in\CC$. Take $\lambda\in\CC$, $\Re(\lambda)>0$. It follows from Theorem~\ref{Thm:disque-sans-zeros} that the Riemann zeta function $s\longmapsto \zeta(s)$ does not vanish in the disc 
\[
r+\left\{\mu\in\CC:\left|\frac{\mu-\lambda}{\mu+\bar\lambda}\right|<\sqrt{1-2\Re(\lambda)d^2_r(\lambda)}\right\},
\] 
where $d_r(\lambda)=\dist(t^{\bar\lambda}\chi_{(0,1)},K_r)$, with $K_r=\Span(f_{A,r}: A \hbox{ a $1$-admissible sequence})$.

If we take only admissible sequences of length $2$, we can easily prove the following.
\begin{Prop}\label{prop:ss-espace} We have
\begin{multline*}
K_r=\Span\left(t^r(c_1\psi\left(\frac{\alpha_1}{t}\right) + c_2\psi\left(\frac{\alpha_2}{t}\right)), \;\; c_1\alpha_1+c_2\alpha_2=0, \; 0<\alpha_j\leq 1\right) = \\ \Span\left(t^r(\psi\left(\frac{\alpha}{t}\right) -\alpha \psi\left(\frac{1}{t}\right)), \;\; 0<\alpha \leq 1\right) .
\end{multline*}
\end{Prop}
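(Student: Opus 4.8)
The plan is to reduce everything to a single bookkeeping identity coming from the admissibility condition. For the zeta case we have $m_L=1$ and $\sigma_0=0$, so a $1$-admissible sequence $A=(\alpha,c)$ of length $n=\ell(\alpha)$ just satisfies $\sum_{j=1}^n c_j\alpha_j=0$, and $f_{A,r}(t)=t^r\sum_{j=1}^n c_j\psi(\alpha_j/t)$. The key step I would carry out first is the observation that, because $\sum_j c_j\alpha_j=0$,
\begin{align*}
f_{A,r}(t)
&= t^r\sum_{j=1}^n c_j\psi\!\left(\frac{\alpha_j}{t}\right) \\
&= t^r\sum_{j=1}^n c_j\left(\psi\!\left(\frac{\alpha_j}{t}\right) - \alpha_j\,\psi\!\left(\frac{1}{t}\right)\right)
= \sum_{j=1}^n c_j\, g_{\alpha_j},
\end{align*}
where I abbreviate $g_\alpha(t):=t^r\big(\psi(\alpha/t)-\alpha\,\psi(1/t)\big)$ for $0<\alpha\le 1$ (the second equality is exactly where admissibility is used; when $n=1$ it forces $c_1=0$, so $f_{A,r}=0$). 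I would also record two trivial facts: each $g_\alpha$ is itself $f_{A',r}$ for the length-$2$ sequence $A'=\big((\alpha,1),(1,-\alpha)\big)$, which is $1$-admissible since $\alpha-\alpha=0$; and applying the displayed identity with $n=2$ shows that every length-$2$ $1$-admissible sequence produces the function $c_1 g_{\alpha_1}+c_2 g_{\alpha_2}$.

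With these in hand the Proposition is a short chain of inclusions. Clearly $K_r\supseteq\Span\{f_{A,r}:\ell(\alpha)=2\}$, since those sequences are among all $1$-admissible ones; and this second span equals $\Span\{g_\alpha:0<\alpha\le 1\}$, the inclusion $\subseteq$ coming from $c_1 g_{\alpha_1}+c_2 g_{\alpha_2}\in\Span\{g_\alpha\}$ and the inclusion $\supseteq$ from $g_\alpha$ being itself a length-$2$ generator. Conversely $K_r\subseteq\Span\{g_\alpha:0<\alpha\le 1\}$, because every generator $f_{A,r}$ of $K_r$ equals the finite combination $\sum_j c_j g_{\alpha_j}$ exhibited above. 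Chaining these, the three spans coincide, which is the assertion.

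There is no genuine obstacle here; the only point deserving a word of care is whether $\Span$ is read as the algebraic or the closed linear span. Since in each of the inclusions above a generating function of one span is written as a finite linear combination of generating functions of the other, the equality of the algebraic spans immediately forces equality of their closures, so the statement holds under either reading — worth stating explicitly, as $K_r$ is defined elsewhere in the paper via a closed span.
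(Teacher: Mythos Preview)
Your argument is correct and is precisely the natural proof the paper has in mind; the paper itself does not spell out a proof (it only says ``we can easily prove the following''), so there is nothing to compare against. Your key identity $f_{A,r}=\sum_j c_j g_{\alpha_j}$, obtained by subtracting $\big(\sum_j c_j\alpha_j\big)\psi(1/t)=0$, together with the observation that each $g_\alpha$ is a length-$2$ generator, is exactly the intended computation, and your remark on algebraic versus closed spans is a useful clarification.
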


In particular, taking $\varphi=\chi_{(0,1)}$ (that is $\sigma_1=0$),  we recover the subspace $\tilde{K}_r$ (see the Introduction) and the result of Nikolski \cite[Theorem 0.1]{Nikolski-AIF}.

We will now try to give more explicit zero free discs for the Riemann zeta function. Applying  Corollary \ref{Cor2:disque-sans-zeros} gives that $s \mapsto \zeta(s)$ does not vanish in the disc
$$
r+\left\{\mu \in \CC \colon \left|\frac{\mu-\lambda}{\mu+\bar{\lambda}}\right| <\sqrt{2\Re(\lambda)} \frac{\left|\zeta(\lambda+r)  \hat{\varphi}(\lambda+r) \sum_{j=1}^{\ell(\alpha)} c_j \alpha_j^{\lambda+r}\right| }{\Vert f_{A,r} \Vert_{L^2_*((0,1), dt/t)}} \right\} .
$$
But the estimate (\ref{remarque_norme}) gives
$$
\Vert f_{A,r} \Vert_{L^2_*((0,1), dt/t)} \leq 
\begin{cases}
\sum_{j=1}^{\ell(\alpha)} |c_j\alpha_j^r| \left( \frac{1}{(1-\sigma_1)\sqrt{2-2r}} + \Vert \psi \Vert_{L^2((1,\infty),\frac{du}{u^{1+2r}})}\right) & \hbox{ if } 0 <r <1, \\
\\
\sum_{j=1}^{\ell(\alpha)} |c_j\alpha_j^r| \left( \frac{(\min_j(\alpha_j)/\max_j(\alpha_j))^{1-r}}{(1-\sigma_1)\sqrt{2r-2}} + \Vert \psi \Vert_{L^2((1,\infty),\frac{du}{u^{1+2r}})} \right) & \hbox{ if } r > 1,
\end{cases}
$$
and thus we deduce (note that the case $r>1$ is less useful):
\begin{Prop} Let $\max(0,\sigma_1)<r<1$ and let $A=(\alpha,c)$ be a 1-admissible sequence. If $\Re(\lambda)>0$, then $s\mapsto \zeta(s)$ does not vanish in the disc
$$
r+\left\{\mu \in \CC \colon \left|\frac{\mu-\lambda}{\mu+\bar{\lambda}}\right| <\sqrt{2\Re(\lambda)} \frac{|\sum_{j=1}^{\ell(\alpha)} c_j \alpha_j^{\lambda+r}|}{\sum_{j=1}^{\ell(\alpha)} |c_j \alpha_j^r|(\Vert \psi \Vert_2 +\frac{1}{(1-\sigma_1)\sqrt{2-2r}})} |\hat{\varphi}(\lambda+r)| |\zeta(\lambda+r)| \right\} .
$$
\end{Prop}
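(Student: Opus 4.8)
The plan is to combine Corollary~\ref{Cor2:disque-sans-zeros} with the closed form for $\widehat{f_{A,r}}$ supplied by Theorem~\ref{Thm:cle-transforme-mellin} and with the norm bound \eqref{remarque_norme}, all specialized to $L=\zeta$, in which case $\sigma_0=0$. By Theorem~\ref{pasbesoin}, the hypothesis $\max(0,\sigma_1)<r<1$ guarantees that $\psi\in L^2\bigl((1,\infty),\frac{du}{u^{1+2r}}\bigr)$, so $f_{A,r}$ is well defined and Corollary~\ref{Cor2:disque-sans-zeros} applies: $\zeta$ does not vanish in
\[
r+\left\{\mu\in\CC:\left|\frac{\mu-\lambda}{\mu+\bar\lambda}\right|<\sqrt{2\re(\lambda)}\,\frac{|\widehat{f_{A,r}}(\lambda)|}{\|f_{A,r}\|_2}\right\}.
\]
First I would rewrite the numerator using Theorem~\ref{Thm:cle-transforme-mellin}: for $\sigma_0=0$ it gives $\widehat{f_{A,r}}(\lambda)=-\zeta(\lambda+r)\hat\varphi(\lambda+r)g_A(\lambda+r)$ with $g_A(\lambda+r)=\sum_{j=1}^{\ell(\alpha)}c_j\alpha_j^{\lambda+r}$, so that $|\widehat{f_{A,r}}(\lambda)|=|\zeta(\lambda+r)|\,|\hat\varphi(\lambda+r)|\,\bigl|\sum_{j=1}^{\ell(\alpha)}c_j\alpha_j^{\lambda+r}\bigr|$; here, when $\lambda=1-r$, the product $\zeta(\lambda+r)g_A(\lambda+r)$ is understood as the analytic one, finite because the pole of $\zeta$ at $1$ is killed by the zero of $g_A$ forced by $1$-admissibility.

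Next I would bound the denominator from above. Since $m_L=1$, Remark~\ref{poly} gives $\psi_1(u)=p_{-1}u$ with $p_{-1}=\res\bigl(\zeta(s)\hat\varphi(s),s=1\bigr)=\hat\varphi(1)=\frac{1}{1-\sigma_1}$ (using $\hat\varphi(s)=\Gamma(s)\Gamma(1-\sigma_1)/\Gamma(s+1-\sigma_1)$). Hence, writing $\beta=\min_j\alpha_j/\max_j\alpha_j$ and using $0<r<1$ so that $1-2r>-1$,
\[
\int_{\beta}^{1}|\psi_1(u)|^2\frac{du}{u^{1+2r}}=\frac{1}{(1-\sigma_1)^2}\int_{\beta}^{1}u^{1-2r}\,du\le\frac{1}{(1-\sigma_1)^2(2-2r)},
\]
and inserting this into \eqref{remarque_norme} produces exactly the bound
\[
\|f_{A,r}\|_2\le\sum_{j=1}^{\ell(\alpha)}|c_j\alpha_j^r|\left(\|\psi\|_2+\frac{1}{(1-\sigma_1)\sqrt{2-2r}}\right)
\]
displayed right before the statement.

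Combining the last two steps yields a lower bound for $|\widehat{f_{A,r}}(\lambda)|/\|f_{A,r}\|_2$; since the set $\{\mu:|(\mu-\lambda)/(\mu+\bar\lambda)|<R\}$ grows with $R$, replacing the radius in Corollary~\ref{Cor2:disque-sans-zeros} by this smaller quantity leaves the zero-free conclusion intact, and the resulting disc is precisely the one in the statement. I do not expect any real obstacle: the argument is entirely mechanical, the only points needing a word being the evaluation of the residue $p_{-1}$ and the convergence of $\int_0^1 u^{1-2r}\,du$, which is exactly why $r<1$ is assumed --- and why, as the text notes, the companion estimate for $r>1$ (and hence the analogous proposition there) is ``less useful''.
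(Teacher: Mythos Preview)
Your proposal is correct and follows essentially the same route as the paper: apply Corollary~\ref{Cor2:disque-sans-zeros}, rewrite $|\widehat{f_{A,r}}(\lambda)|$ via Theorem~\ref{Thm:cle-transforme-mellin}, and bound $\|f_{A,r}\|_2$ using \eqref{remarque_norme} after computing $\psi_1(u)=u/(1-\sigma_1)$ and $\int_\beta^1 u^{1-2r}\,du\le 1/(2-2r)$. Your explicit evaluation of the residue $p_{-1}=\hat\varphi(1)=1/(1-\sigma_1)$ and your remark on the analytic interpretation of $\zeta(\lambda+r)g_A(\lambda+r)$ at $\lambda=1-r$ are details the paper leaves implicit, but otherwise the arguments coincide.
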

In particular, if we consider only admissible sequences of length $2$, we obtain zero-free discs of the form
\begin{equation}\label{disc_mar}
r+\left\{\mu \in \CC \colon \left|\frac{\mu-\lambda}{\mu+\bar{\lambda}}\right| <\sqrt{2\Re(\lambda)}  \frac{|\alpha^{\lambda+r} -\alpha|}{(\alpha^r+\alpha)(\Vert \psi \Vert_2 + \frac{1}{(1-\sigma_1)\sqrt{2-2r}})} |\hat{\varphi}(\lambda+r)| |\zeta(\lambda+r)| \right\}.
\end{equation}
Since $|\hat{\varphi}(\lambda+r)| = O(1/|\Im(\lambda)|^{1-\sigma_1})$ we obtain larger disc than in (\cite{Nikolski-AIF}) whenever  $\Im(\lambda)$ is large enough and $\sigma_1>0$. We emphasize on the fact that our zero-free discs are explicit.

For example, if we take $\alpha=1/4$ in (\ref{disc_mar}), we obtain the following zero-free region for zeta which can be improved but has the merit to be quite explicit:
\begin{Cor}\label{thm:explicit-zero-RZ} Let $\max(0,\sigma_1)<r<1$ and let $\lambda \in \CC$ such that $\Re(\lambda)>0$. Then the Riemann zeta function does not vanish in the disc
$$
r+ \left\{\mu \in \CC \colon \left|\frac{\mu-\lambda}{\mu+\bar{\lambda}}\right| < F(\lambda,r,\sigma_1) \right\},
$$
where 
\begin{multline*}
F(\lambda,r,\sigma_1)= \\ 
\frac{\sqrt{2\Re(\lambda)} \left|\left(\frac{1}{4}\right)^{\lambda+r}-\frac{1}{4}\right| \big|\Gamma(\lambda+r) \Gamma(1-\sigma_1)\big| }{\left(\left(\frac{1}{4}\right)^r + \frac{1}{4}\right) \left(\sqrt{C(\sigma_1) \zeta(1+2(r-\sigma_1))}+\frac{1}{(1-\sigma_1)\sqrt{2-2r}}\right) \big|\Gamma(\lambda+r+1-\sigma_1)\big|} |\zeta(\lambda+r)|. 
\end{multline*}
\end{Cor}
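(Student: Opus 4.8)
The plan is to obtain Corollary~\ref{thm:explicit-zero-RZ} as a direct specialization of the zero-free disc \eqref{disc_mar}, after inserting the explicit data attached to the present choices $\varphi(t)=(1-t)^{-\sigma_1}\chi_{(0,1)}(t)$ and $\alpha=1/4$, and then bounding $\|\psi\|_2$ by means of Theorem~\ref{pasbesoin}. So the statement is really a bookkeeping corollary of results already in hand.

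First I would fix $\max(0,\sigma_1)<r<1$ (note $r\neq 1$ is then automatic) and $\lambda\in\CC$ with $\re(\lambda)>0$, and apply the Proposition preceding equation~\eqref{disc_mar} to the length-$2$ admissible sequence $\alpha=(1/4,1)$, $c=(4,-1)$, which is non-trivial and satisfies $c_1\alpha_1+c_2\alpha_2=0$. Since $\sum_j c_j\alpha_j^{\lambda+r}=4\big((1/4)^{\lambda+r}-1/4\big)$ and $\sum_j|c_j\alpha_j^{r}|=4\big((1/4)^{r}+1/4\big)$, that Proposition yields at once that $\zeta$ does not vanish on
\[
r+\left\{\mu\in\CC:\left|\frac{\mu-\lambda}{\mu+\bar\lambda}\right|<\sqrt{2\re(\lambda)}\,\frac{\big|(1/4)^{\lambda+r}-1/4\big|}{\big((1/4)^{r}+1/4\big)\big(\|\psi\|_2+\frac{1}{(1-\sigma_1)\sqrt{2-2r}}\big)}\,|\hat\varphi(\lambda+r)|\,|\zeta(\lambda+r)|\right\}.
\]
Next I would substitute the closed form $\hat\varphi(s)=\Gamma(s)\Gamma(1-\sigma_1)/\Gamma(s+1-\sigma_1)$ computed earlier in this section, so that $|\hat\varphi(\lambda+r)|=|\Gamma(\lambda+r)\Gamma(1-\sigma_1)|/|\Gamma(\lambda+r+1-\sigma_1)|$; all three Gamma-values are finite and non-zero because $\re(\lambda+r)>0$, so the quotient makes sense.

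Finally, Theorem~\ref{pasbesoin} gives $\|\psi\|_2\le\sqrt{C(\sigma_1)\,\zeta(1+2(r-\sigma_1))}$, which is legitimate since $1+2(r-\sigma_1)>1$ (so the $\zeta$-value is an honest positive number) and $C(\sigma_1)>0$ because $\sigma_1<1/2$. Replacing $\|\psi\|_2$ by this larger quantity in the denominator only shrinks the radius, so $F(\lambda,r,\sigma_1)$ is at most the radius in the displayed disc above; a disc with radius at most a zero-free radius is itself zero-free, which is exactly the asserted conclusion. In the degenerate case $\lambda+r=1$ the numerator vanishes, $F=0$, and there is nothing to prove.

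I do not expect any real obstacle: the argument is a specialization of already-proven statements, and the only points deserving a line of care are that the chosen pair $(\alpha,c)$ is a valid non-trivial $1$-admissible sequence, that the Gamma-quotient and the $\zeta$-value occurring are finite, and—perhaps the one genuinely substantive remark—that enlarging the denominator produces a smaller but still valid zero-free disc rather than an unrelated region, so that passing from \eqref{disc_mar} to the bound $F(\lambda,r,\sigma_1)$ preserves the conclusion.
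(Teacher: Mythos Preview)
Your proposal is correct and follows exactly the paper's approach: the paper obtains the corollary by substituting $\alpha=1/4$ into \eqref{disc_mar}, inserting the explicit form $\hat\varphi(s)=\Gamma(s)\Gamma(1-\sigma_1)/\Gamma(s+1-\sigma_1)$, and bounding $\|\psi\|_2$ via Theorem~\ref{pasbesoin}. Your choice of the admissible pair $(\alpha,c)=((1/4,1),(4,-1))$ is proportional to the paper's implicit choice $((1/4,1),(1,-1/4))$, so the factors of $4$ cancel and you land on the same disc; the extra care you take (finiteness of the Gamma-quotient, monotonicity when enlarging the denominator, the degenerate case $\lambda+r=1$) is sound but not strictly needed here.
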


Recall that the discs in Corollary~\ref{thm:explicit-zero-RZ} are euclidean discs of center $(x,y)$ and radius $R$ where:
\begin{eqnarray*}
x & = & r+ \Re(\lambda) \frac{1+F(\lambda,r,\sigma_1)^2}{1-F(\lambda,r,\sigma_1)^2},  \qquad y  =  \Im(\lambda) \\
R & =& \frac{2\Re(\lambda)F(\lambda,r,\sigma_1)}{1-F(\lambda,r,\sigma_1)^2}.
\end{eqnarray*}
Taking, for example, $\lambda=0.01+50i$, $r=0.49$ and $\sigma_1=0.4$ then a simple evaluation of $F(\lambda,r,\sigma_1)$ implies that $\zeta$ has no zero in the disc of center 
$\frac{1}{2} + 50i$ and radius $3.75 \times 10^{-6}$. Note that $\zeta$ has a zero at $s=\frac{1}{2} +  49.773\dots i$ and at $s=\frac{1}{2} + 52.970\dots i$.

\subsection{Dirichlet $L$-functions} 
Let $\chi$ be a non trivial Dirichlet character of conductor $\cond(\chi)=q$ and $L(\chi,s)=\sum_{n\geq 1} \chi(n)n^{-s}$ be the (degree 1) associated Dirichlet function. This function has an analytic continuation to the whole complex plane. For simplicity, we take $\sigma_1=\sigma_0=0$ and
\[
\varphi(t)=\begin{cases}
1 &\hbox{if }0\leq t <  1,\\
0&\hbox{if }t\geq 1.
\end{cases}
\]
So we have $\psi(u)=-\sum_{n<u} \chi(n)$. Since $\psi$ is bounded, it belongs to $L^2((1,\infty),\frac{du}{u^{1+2r}})$ for all $r>0$. 
The admissibility condition for the sequence $A$ is empty and we may take
$$
f_{A,r}(t)=t^r \sum_{j=1}^{\ell} c_j \sum_{n<\alpha_j/t} \chi(n)
$$
for all $A=(\alpha,c)$ where $\ell\geq 0$, $\alpha \in (0,1]^{\ell}$ and $c\in \CC^{\ell}$. Let $r$ such that $1/2\leq r <1$ and $\lambda=1-r$. For simplicity, 
we write $d_r=d_r(\lambda)$ so that
$$
d_r^2=\min_{c_j, \alpha_j} \left( \int_0^1 \left| t^{1-r} - t^r\sum_{j=1}^{\ell}  c_j \sum_{n<\alpha_j/t} \chi(n)\right|^2 \frac{dt}{t} \right)
$$
Furthermore, we will write $d_{r,\chi}$ if we need to specify the dependance on $\chi$. Remark that we have trivially $d_r^2 \leq 1/(2-2r)$ (just take all the $c_j$ to be 0). In fact, we can be a little bit more precise.
\begin{Prop}\label{Prop:dr-dirichlet}  
We have $d_r^2 < 1/(2-2r)$.
\end{Prop}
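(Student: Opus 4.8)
The plan is to use that, with $\lambda=1-r$ and $\sigma_0=\sigma_1=0$, one has $d_r^2=\dist^2(v,K_r)$ in $L^2_*((0,1),dt/t)$, where $v(t):=t^{1-r}\chi_{(0,1)}(t)$, and that $\|v\|_2^2=\int_0^1 t^{2(1-r)}\,\frac{dt}{t}=\frac{1}{2-2r}$. Hence $d_r^2<\frac{1}{2-2r}$ is equivalent to $v$ not being orthogonal to $K_r$, and it suffices to exhibit one element of $K_r$ against which $v$ has a nonzero inner product.

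First I would note that since $\chi$ is non-trivial, $L(\chi,s)$ has no pole and $m_L=0$, so there is no admissibility condition: every pair $(\alpha,c)$ is $0$-admissible. In particular $A=((1),(1))$ is admissible, and the corresponding function $f(t):=f_{A,r}(t)=t^{r}\sum_{n<1/t}\chi(n)$ lies in $K_r$. Since $\psi(u)=-\sum_{n<u}\chi(n)$ is bounded (partial sums of a non-trivial character are $O(q)$), one has $f\in L^2_*((0,1),dt/t)$ and $0<\|f\|_2<\infty$; note that $f(t)=t^{r}$ for $t\in(1/2,1)$, so $f$ is not a.e.\ zero.

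Next I would carry out the key computation:
\[
\langle v,f\rangle_{L^2_*((0,1),dt/t)}=\int_0^1 t^{1-r}\,\overline{t^{r}\sum_{n<1/t}\chi(n)}\,\frac{dt}{t}=\int_0^1\Big(\sum_{n<1/t}\overline{\chi(n)}\Big)\,dt,
\]
and, substituting $u=1/t$, this equals $\int_1^{+\infty}\big(\sum_{n<u}\overline{\chi(n)}\big)u^{-2}\,du$. Truncating the integral at $N$ so that only finitely many values of $\chi$ are involved, interchanging the (now finite) sum with the integral, and letting $N\to+\infty$ — the boundary term $\frac1N\sum_{n<N}\overline{\chi(n)}$ tending to $0$ by boundedness of the character partial sums — one finds $\langle v,f\rangle=\sum_{n\ge1}\overline{\chi(n)}/n=L(\bar\chi,1)$. (This matches what one expects from $\widehat{f_{A,r}}(\lambda)=L(\lambda+r)\hat\varphi(\lambda+r)g_A(\lambda+r)$ specialized at $\lambda+r=1$, using $\hat\varphi(1)=1$ and $g_A(1)=1$.)

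Finally, since $\bar\chi$ is again a non-trivial Dirichlet character, the classical non-vanishing $L(\bar\chi,1)\neq0$ gives $\langle v,f\rangle\neq0$, so projecting $v$ onto the line spanned by $f$ yields
\[
d_r^2=\dist^2(v,K_r)\le\|v\|_2^2-\frac{|\langle v,f\rangle|^2}{\|f\|_2^2}=\frac{1}{2-2r}-\frac{|L(\bar\chi,1)|^2}{\|f\|_2^2}<\frac{1}{2-2r}.
\]
The only ingredient beyond elementary estimates is the non-vanishing of $L(\bar\chi,1)$; the rest is routine, so I do not anticipate a real obstacle — the mild care needed lies only in justifying the truncation and passage to the limit in the integral, which is handled by the boundedness of the character partial sums.
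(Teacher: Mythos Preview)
Your proof is correct and follows essentially the same route as the paper: the paper expands $\int_0^1 |t^{1-r}-ct^r\sum_{n<\alpha/t}\chi(n)|^2\,dt/t$ as a quadratic in $c$ and observes that, since the cross term involves $L(\chi,1)\neq 0$, one can choose $c$ to push the value strictly below $1/(2-2r)$. Your orthogonal-projection formulation with $\alpha=1$ is exactly the optimized version of that quadratic, and the key input in both cases is the nonvanishing of $L(\chi,1)$ (equivalently $L(\bar\chi,1)$).
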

\begin{proof} Let $c\in \CC$ and $0<\alpha \leq 1$. A short calculation gives
\begin{multline*}
d_r^2 \leq \int_0^1 \left|t^{1-r} - ct^r \sum_{n<\alpha/t} \chi(n)\right|^2 \frac{dt}{t} = \\
 \frac{1}{2-2r} -2\alpha \Re(cL(\chi,1)) + |c|^2\alpha^{2r+1} \int_1^\infty \left|\sum_{n<u} \chi(n)\right|^2 \frac{du}{u^{\tcr{1}+2r}}.
\end{multline*}
Since we know that $L(\chi,1)\neq 1$ (see for example \cite[p. 37]{kowalski-iwaniec}), one can  choose a suitable $c$ such that the right hand side of the equation above is  less than $1/(2-2r)$.
\end{proof}
Theorem~\ref{Thm:disque-sans-zeros} asserts that $L(\chi,s)$ does not vanish on the disc
\begin{equation}\label{disc}
r+ \left\{\mu \in \CC \colon \left|\frac{\mu+r-1}{\mu-r+1}\right| <\sqrt{1-2(1-r)d_r^2} \right\}.
\end{equation}
Note that the proposition above implies that the disc is not empty. 
We deduce, in particular, that $L(\chi,\sigma)$ does not vanish on the real-interval
$$
\sigma > 1-\frac{\sqrt{1-2(1-r)d_r^2} - ( 1-2(1-r)d_r^2)}{d_r^2}.
$$ 
(We can easily check that the disc (\ref{disc}) contains $\sigma=1$ and it is known that $L(\chi,\sigma)\neq 0$ for $\sigma \geq 1$.) 
Hence, $L(\chi,\sigma)$ does not vanish on
\begin{eqnarray}\label{eq:demi-droite-zero-free}
\sigma > 1- (1-r)\sqrt{1-2(1-r)d_r^2}.
\end{eqnarray}
We expect (by the Riemann hypothesis for Dirichlet functions) that $d_r=0$ which would imply that $L(\chi,s)$ does not vanish on $\Pi_r$. If we had ``only"
$$
d_r^2 \leq \frac{1}{2-2r} - \frac{C^2}{2 (\log q)^2 (1-r)^3}
$$
for some (absolute) constant $C$, it would imply (using \eqref{eq:demi-droite-zero-free}) that $L(\chi,\sigma)$ does not vanish in the real-interval $\sigma >1- C/\log q$. That proves Theorem~\ref{dirichlet} and we also get immediately the following.
\begin{Thm}  If there exists an absolute constant $C$ such that for all (real) character $\chi$ there exists $r \in [1/2,1)$ with 
$$
d_{r,\chi}^2 < \frac{1}{2-2r} - \frac{C^2}{2 (\log \cond(\chi))^2 (1-r)^3},
$$ 
then there is no Siegel's zero for Dirichlet $L$-functions.
\end{Thm}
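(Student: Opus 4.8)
The plan is to obtain this statement as an immediate consequence of Theorem~\ref{dirichlet}, the only new ingredient being the passage from a single character to the full family of real characters. First I would fix an arbitrary real Dirichlet character $\chi$ and write $q=\cond(\chi)$. By hypothesis there exists $r=r(\chi)\in[1/2,1)$ with
\[
d_{r,\chi}^2 < \frac{1}{2-2r} - \frac{C^2}{2(\log q)^2(1-r)^3}.
\]
In particular the (non-strict) inequality appearing in Theorem~\ref{dirichlet} holds for this value of $r$, and since $[1/2,1)\subset[1/2,1]$ this $r$ is admissible in that theorem. Applying Theorem~\ref{dirichlet} to $\chi$, with the absolute constant $C$ supplied by the hypothesis, then gives that $L(\chi,\sigma)$ has no zero in the real interval $\sigma>1-C/\log q$.

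The decisive point is uniformity: the constant $C$ is assumed to be absolute, i.e.\ independent of $\chi$, and Theorem~\ref{dirichlet} carries it through unchanged. Thus one and the same $C>0$ has the property that, for \emph{every} real character $\chi$ of conductor $q$, the function $\sigma\mapsto L(\chi,\sigma)$ does not vanish on $(1-C/\log q,\,1)$. By definition this says that no Dirichlet $L$-function attached to a real character possesses an exceptional real (Siegel) zero, which is exactly the asserted conclusion.

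Since the argument is just a quantified invocation of Theorem~\ref{dirichlet}, I do not expect any analytic difficulty here; the only points requiring a line of care are bookkeeping ones — checking that the $\chi$-dependence of $r$ allowed in the hypothesis is compatible with the ``for some $1/2\le r\le 1$'' in Theorem~\ref{dirichlet}, and that the strict inequality assumed here implies the non-strict one needed there. The substantive work sits entirely upstream, in Theorem~\ref{dirichlet} (hence in Theorem~\ref{Thm:disque-sans-zeros} and Proposition~\ref{Prop:dr-dirichlet}); the present statement merely recasts it as a Beurling--Nyman-type criterion for the Siegel zero problem.
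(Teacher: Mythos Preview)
Your proposal is correct and matches the paper's own treatment: the paper states that this theorem follows ``immediately'' from Theorem~\ref{dirichlet}, and your argument is precisely this immediate deduction, together with the observation that the uniformity of $C$ over all real characters is what rules out Siegel zeros.
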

The previous theorem can be seen as a Beurling-Nyman criterion for Siegel's conjecture. 

\subsubsection{Explicit zero-free discs for $L(\chi,s)$}

We now apply Corollary \ref{Cor2:disque-sans-zeros}. Note that for the special choice of $\varphi$ we have done ($\varphi=\chi_{(0,1)}$), we have $\hat{\varphi}(s)=1/s$. Moreover, since the function $\psi$ is bounded, say $\|\psi\|\leq B$,  we have 
\[
\Vert \psi\Vert_{L^2((1,\infty),du/u^{1+2r})} \leq B/\sqrt{2r}.
\]
Then Corollary~ \ref{Cor2:disque-sans-zeros} asserts that $s\mapsto L(\chi,s)$ does not vanish in the disc
$$
r+\left\{\mu \in \CC \colon \left|\frac{\mu-\lambda}{\mu+\bar{\lambda}}\right| <\sqrt{2\Re(\lambda)} \frac{|L(\chi,\lambda+r)|}{|\lambda+r|} \frac{\sqrt{2r}}{B} \right\}.
$$
Let us remark that the P\'olya-Vinogradov's theorem implies that $B\leq 2\sqrt{q}\log q$ where $q=\cond(\chi)$ is the conductor of $\chi$. Note also that there are some improvements of the P\'olya-Vinogradov's inequality for some characters (see for example \cite{granville-sound}).

\subsection{The Selberg class}

Let $L(s)=\sum_{n\geq 1} a_n n^{-s}$ be a $L$-function in the Selberg class $\mathcal S$ (\cite{anne-TAMS}). We denote by $d$ its degree and by $m_L$  the order of its pole at $s=1$.

As for the Riemann zeta function, we take
\[
\varphi(t)=\begin{cases}
(1-t)^{-\sigma_1} &\hbox{if }0\leq t <  1\\
0&\hbox{if }t\geq 1
\end{cases}
\]
where $\sigma_1 <1/2$. By the Phragmen-Lindel\"of principle, we have that $L(r+it)=O_\varepsilon(t^{\frac{d}{2}(1-r)+\varepsilon})$ for $0<r\leq1/2$. Then we deduce from Theorem \ref{Thm:cns-integrabilite-psi} and \eqref{stirling} that 
$$
\psi \in L^2\left((1,\infty), \frac{du}{u^{1+2r}}\right)
$$
if the following inequality holds
$$
\sigma_1 < \frac{1}{2} - \frac{1-r}{2}d .
$$
In particular, for $r=1/2$, this gives $\sigma_1 < 1/2 - d/4$ which is  better than $\sigma_1<-d/4$ obtained in \cite{anne-TAMS}. Nevertheless, we should mention that for $d<4$, we can in fact take $\sigma_1=0$ (see \cite{Anne-Acta-Arithm}).
\bibliographystyle{alpha}
\bibliography{bibli}
\end{document}